\documentclass[12pt, twoside]{article}

\usepackage{tocloft}
\renewcommand{\cftsecleader}{\cftdotfill{\cftdotsep}}
\usepackage{titletoc}
\usepackage{setspace}
\usepackage{tabularx}
\usepackage{pdfpages}
\usepackage{amsmath,amssymb}
\usepackage{amsthm}
\usepackage{epsfig}
\usepackage{graphics}
\usepackage{graphicx}
\usepackage{colordvi}
\usepackage{mathrsfs} 
\usepackage{stmaryrd}
\usepackage{geometry}
\usepackage{dsfont}
\usepackage{tikz}
\usetikzlibrary{automata}
\usepackage{url}
\usepackage[hidelinks]
{hyperref}
\hypersetup{
    colorlinks=false,
   linkcolor=blue!75!black,
    filecolor=blue!75!black,      
   urlcolor=blue!75!black, 
   citecolor=blue!75!black,
}
\usepackage{enumitem}
\usepackage[textsize=tiny]{todonotes}
\usepackage[margin=1cm, size=small]{caption}
\usepackage{multirow}
\allowdisplaybreaks[4]
\topmargin -1.5cm \textwidth 6.5in \textheight 9in
\oddsidemargin
0in \evensidemargin 0in\marginparwidth 0.4in
\usepackage{color,soul}
\usepackage{xcolor}
\usepackage{colortbl}

\usepackage{etoolbox}
\patchcmd{\thebibliography}{\section*}{\section}{}{}
\usepackage{fancyvrb}
\usepackage{fvextra}
\usepackage{fancyhdr}
\setlength{\arrayrulewidth}{0.3mm}
\setlength{\tabcolsep}{18pt}

\usepackage{comment}
\usepackage{chngcntr}
\counterwithin{figure}{section}

\DeclareFontFamily{U}{mathx}{}
\DeclareFontShape{U}{mathx}{m}{n}{<-> mathx10}{}
\DeclareSymbolFont{mathx}{U}{mathx}{m}{n}
\DeclareMathAccent{\widehat}{0}{mathx}{"70}
\DeclareMathAccent{\widecheck}{0}{mathx}{"71}
\DeclareMathAlphabet{\little}{U}{dutchcal}{m}{n}

\usepackage{empheq}
\usetikzlibrary{automata}
\definecolor{blue2}{cmyk}{.94,.11,0,0}
\usepackage{enumitem}
\setitemize{itemsep=0pt}
\definecolor{myblue}{rgb}{.8, .8, 1}
\newlength\mytemplen
\newsavebox\mytempbox

\newcommand{\ol}{\overline}

\newcommand{\BES}{{\rm BES}}

\newcommand{\e}{{\mathrm e}}
\newcommand{\1}{\mathds 1}
\newcommand{\G}{\mathscr G}
\newcommand{\C}{{\mathscr C}}

\newcommand{\B}{\mathscr B}

\renewcommand{\O}{{\mathcal O}}

\newcommand{\N}{{\mathbf N}}
\newcommand{\vep}{{\varepsilon}}

\newcommand{\sgn}{{\rm sgn}}

\newcommand{\dint}{{\int\!\!\!\int}}

\newcommand{\supp}{{\rm supp}}

\newcommand{\bs}{\boldsymbol}
\newcommand{\ms}{\mathscr}
\renewcommand{\P}{{\mathbb P}}
\newcommand{\E}{{\mathbb E}}

\newcommand{\mc}{\mathcal}
\renewcommand{\d}{{\mathrm d}}

\newcommand{\R}{{\Bbb R}}

\renewcommand{\i}{{\mathtt i}}
\newcommand{\defeq}{{\stackrel{\rm def}{=}}}

\renewenvironment{proof}[1][\proofname]{\noindent {\bfseries #1.}\;}{\hfill\ensuremath{\blacksquare}\\}

\newcommand{\EM}{\gamma_{\mathsf E\mathsf M}}
\newcommand{\cc}{{^\circ}}

\newcommand{\two}{{\sqrt{2}}}

\newcommand{\oovarphi}{\overline{\overline{\varphi}}}
\newcommand{\oophi}{\overline{\overline{\varphi}}{\vphantom{\overline{\varphi}}}}

\newcommand{\lv}{\Lambda_\vep}
\newcommand{\llv}{\Lambda_\vep}

\newcommand{\uvep}{{}^{\,\vep}}

\usepackage{multirow}
\usepackage[margin=2cm]{caption}

\newcommand{\slla}{\langle\!\langle}
\newcommand{\srra}{\rangle\!\rangle}
\newcommand{\wt}{\widetilde}

\makeatletter
\newsavebox{\@brx}
\newcommand{\lla}[1][]{\savebox{\@brx}{\(\m@th{#1\langle}\)}
  \mathopen{\copy\@brx\kern-0.5\wd\@brx\usebox{\@brx}}}
\newcommand{\rra}[1][]{\savebox{\@brx}{\(\m@th{#1\rangle}\)}
  \mathclose{\copy\@brx\kern-0.5\wd\@brx\usebox{\@brx}}}
\makeatother

\newtheoremstyle{slantthm}{10pt}{10pt}{\slshape}{}{\bfseries}{}{.5em}{\thmname{#1}\thmnumber{ #2}\thmnote{ (#3)}.}
\newtheoremstyle{slantrmk}{10pt}{10pt}{\rmfamily}{}{\bfseries}{}{.5em}{\thmname{#1}\thmnumber{ #2}\thmnote{ (#3)}.}

\begin{document}
\theoremstyle{slantthm}
\newtheorem*{mthm}{Main Theorem}
\newtheorem{thm}{Theorem}[section]
\newtheorem{prop}[thm]{Proposition}
\newtheorem{lem}[thm]{Lemma}
\newtheorem{cor}[thm]{Corollary}
\newtheorem{defi}[thm]{Definition}
\newtheorem{disc}[thm]{Discussion}
\newtheorem{conj}[thm]{Conjecture}

\theoremstyle{slantrmk}
\newtheorem{ass}[thm]{Assumption}
\newtheorem{rmk}[thm]{Remark}
\newtheorem{eg}[thm]{Example}
\newtheorem{que}[thm]{Question}
\numberwithin{equation}{section}
\newtheorem{quest}[thm]{Quest}
\newtheorem{prob}[thm]{Problem}
 \newtheorem{nota}[thm]{Notation}
\newcommand{\thetitle}{Asymptotic exponential moments of two-dimensional additive functionals above subcriticality}

\thispagestyle{empty}
\setlength{\cftbeforesecskip}{1pt}
\renewcommand{\cftsecleader}{\cftdotfill{\cftdotsep}}

\setcounter{page}{1}

\title{\vspace{-1.5cm}
\bf \thetitle\footnote{Support from an NSERC Discovery grant is gratefully acknowledged.}}

\author{Yu-Ting Chen\,\footnote{Department of Mathematics and Statistics, University of Victoria, British Columbia, Canada.}\,\,\footnote{Email: \url{chenyuting@uvic.ca}}\vspace{-.4cm}}

\date{\today \vspace{-1cm}
}

\maketitle
\abstract{We introduce new asymptotic formulas for the exponential moments of rescaled additive functionals of two-dimensional Brownian motion, extending the analysis to parameter regimes that go beyond the subcritical case underlying distributional limits. As an application, we develop a probabilistic–analytic approximation for the two-dimensional Laplacian with a singular perturbation at the origin. This method describes the perturbed Laplacian by combining analytic characteristics of the Kallianpur–Robbins and Kasahara–Kotani laws. \medskip  

\noindent \emph{Keywords:} Brownian motion; additive functionals; large deviations; excursions; local times; Schr\"odinger operators. 
\smallskip 

\noindent \emph{Mathematics Subject Classification (2020):} 60F10, 60J55, 60J65, 60H30.

\setlength{\cftbeforesecskip}{0pt}
\setlength\cftaftertoctitleskip{0pt}
\renewcommand{\cftsecleader}{\cftdotfill{\cftdotsep}}
\setcounter{tocdepth}{2}
\tableofcontents

\section{Introduction}\label{sec:intro}
The study of limit theorems for additive functionals has a long history centred on large-time asymptotic laws. In this context, two-dimensional Brownian motion has gathered significant attention; see \cite{PY:86} for a survey. This is owing to the special feature that for a two-dimensional standard Brownian motion $W$, points are \emph{polar} in the sense that for any point $z$, $\P(W_t\neq z\;\forall\; t>0)=1$, whereas
\begin{align}\label{def:Af}
A_\varphi(t)\,\defeq\,\int_0^t \varphi(W_s)\d s,
\end{align}
which is an additive functional, tends to infinity in distribution as $t\to\infty$. More specifically, such a property of $A_\varphi(t)$ holds,  for example, when the support of the function $\varphi$ is compact and has a nonempty interior, and $\lla \varphi\rra>0$, where
\begin{align}
\lla \varphi\rra\, \defeq\int \varphi(z')\d z'.\label{def:phig}
\end{align} 
We refer to $\varphi$ in $A_{\varphi}(t)$ as the {\bf occupation function} because when $\varphi$ is an indicator function, $A_{\varphi}(t)$ recovers the occupation measure.

In this paper, our main focus is on the framework of $A_{\varphi_\vep}(t)$ for $\vep\searrow 0$, where
\begin{align}\label{def:ffvep}
\varphi_\vep(z')\,\defeq\,\vep^{-2}\varphi(\vep^{-1}z'),
\end{align}
and $W_0=\vep z$ for fixed $z$. {\bf Here and in what follows, the functions $\varphi$ are assumed to be bounded, have compact support, and satisfy $\lla |\varphi|\rra > 0$, although $\varphi$ need not be nonnegative.} The frameworks of $A_{\varphi_\vep}(t)$ for $\vep\searrow 0$ and $A_\varphi(t)$ for $t\to\infty$ are essentially interchangeable, due to the following identity:
\begin{align}\label{Brownianscaling}
\P^W_{\vep z}(A_{\varphi_\vep}(t)\in \d x)=
\P^W_z(A_{\varphi}(\vep^{-2}t)\in \d x),
\end{align}
which uses the notation $W_0=z'$ under $\P^W_{z'}$ and follows from the Brownian scaling property. Nevertheless, the additive functionals $A_{\varphi_\vep}(t)$ provide a more straightforward framework for the viewpoints presented later. More importantly, despite the aforementioned polarity of points, $A_{\varphi_\vep}(t)$ is not identically equal to zero and can serve as an approximate local time when the occupation functions $\varphi_\vep$ are approximations to the identity, which is the case when $0\leq \varphi\in \C_c$ and $\lla \varphi\rra=1$. Such approximate local times are central to certain mathematical approaches in statistical physics and naturally emerge as probabilistic analogues of quantum-solvable models using delta-function potentials. We will briefly discuss these connections later in this introduction. 
 
The primary goal of this paper is to extend the asymptotic law of Kallianpur and Robbins \cite{KR:53}, which generalizes the blow-up of additive functionals mentioned below \eqref{def:Af}, and another asymptotic law by Kasahara and Kotani~\cite{KK} concerning the fluctuations of additive functionals. Under the conditions on $\varphi$ specified above, these two laws apply and can be restated as the following two limits in distribution, respectively, using the setting of $A_{\varphi_\vep}(t)$ subject to $\P^W_{\vep z}$ with $\vep\searrow 0$:
\begin{align}
\frac{\pi }{\log \vep^{-1}}A_{\varphi_\vep}(t)&\xrightarrow[\vep\to 0]{\rm (d)}\mathbf e\quad\mbox{if }\lla \varphi\rra=1,\label{def:KR}\\
\label{def:KK}
\sqrt{\frac{\pi}{\lla \mc E(\varphi)\rra \log \vep^{-1}}}A_{\varphi_\vep}(t)&\xrightarrow[\vep\to 0]{\rm (d)}\two \, B_{\mathbf e}\quad\mbox{if }\lla \varphi\rra=0.
\end{align}
Here, we adopt the following notations:
\begin{itemize}
\item In \eqref{def:KR} and \eqref{def:KK}, $\mathbf e$ is an exponential random variable with mean $1$. 
\item In \eqref{def:KK}, $B$ is a one-dimensional Brownian motion  starting from $0$ and independent of $\mathbf e$,  and $\lla \mc E(\varphi)\rra$ is called the {\bf logarithmic energy} as we set
\begin{align} \label{def:energy}
\mathcal E(\varphi)(z')&\,\defeq\, \varphi(z')\int \kappa(z'-z'')\varphi(z'')\d z'',\quad  
\kappa(z')\,\defeq\, \frac{1}{\pi}\log\frac{1}{ |z'|}.
\end{align}
A method for verifying the strict positivity of $\lla\mathcal E(\varphi)\rra$ is presented in \cite[Example~3.3]{Mattner}.
\end{itemize}
In the sequel, we will refer to \eqref{def:KR} and \eqref{def:KK} as the {\bf Kallianpur--Robbins law} and the {\bf Kasahara--Kotani law}, respectively, and to both collectively as {\bf the two laws}. 

Our extensions of the two laws address exponential moments by establishing asymptotic representations of the following Feynman–Kac formulas and their variants as $\vep \searrow 0$:
\begin{align}\label{def:FK0}
\E^{W}_{\vep z}[\e^{\mu_\vep A_{\varphi_\vep}(t)}g(W_t)]\quad (\E^W_{z'}\,\defeq\, \E^{\P^{W}_{z'}}).
\end{align}
Here, $g$ are bounded nonnegative with $\lla g\rra>0$, and
\begin{align}\label{def:muvep}
\mu_\vep\equiv \begin{cases}
\lv,&\mbox{if } \lla\varphi\rra=1\\
\lv^{1/2},&\mbox{if }\lla \varphi\rra=0
\end{cases},\quad 
\mbox{ for $\lv\sim {\rm cnst}\cdot\log^{-1}\vep^{-1}$.}
\end{align}
That is, we will work with $\mu_\vep$ such that the leading orders obey the scalings $\log^{-1}\vep^{-1}$ and $\log^{-1/2}\vep^{-1}$ for $\lla\varphi\rra=1$ and $\lla\varphi\rra=0$, respectively, as in \eqref{def:KR} and \eqref{def:KK}. The variants introduced in the main theorems are tailored to the regime $\lla \varphi \rra = 1$, thus extending the established formulas in \eqref{finalgoal00}.

While the scalings from the two laws are essential to our approach, the problem of establishing asymptotic representations for the Feynman--Kac formulas in \eqref{def:FK0} is in fact broader. Through these formulas, the choice of $\mu_\vep$ allows us to analyze two-dimensional Brownian motion conditioned on the exponentially rare events where $A_{\varphi_\vep}(t)$ are large. Specifically, these exponentially rare events exist because $\E^{W}_{\vep z}[\e^{\mu_\vep A_{\varphi_\vep}(t)}]$ may either converge or diverge to infinity as $\vep \searrow 0$, depending on the following criteria for the limiting distributions in the two cases, as determined by \eqref{def:KR} and \eqref{def:KK}:
\begin{align}
\E[\exp\{\mu\mathbf e\}]<\infty&\Longleftrightarrow \mu<1,\label{def:murange}\\
\E[\exp\{\sqrt{\mu}\two B_{\mathbf e}\}]<\infty&\Longleftrightarrow \mu<1.\label{KK:expmom}
\end{align}
The corresponding pre-limit expressions for $\E^{W}_{\vep z}[\e^{\mu_\vep A_{\varphi_\vep}(t)}]$ use the choices of $\mu_\vep$ below:
\begin{align}
\mu_\vep &\sim \frac{\mu \pi}{\log\vep^{-1}}\quad \mbox{if }\lla \varphi\rra=1;\label{lvasymp:KR}\\
\mu_\vep &\sim\sqrt{ \frac{\mu \pi}{\lla \mc E(\varphi)\rra\log\vep^{-1}}}\quad \mbox{if }\lla \varphi\rra=0.\label{lvasymp:KK}
\end{align}
Due to \eqref{def:murange} and \eqref{KK:expmom}, the {\bf subcritical}, {\bf critical} and {\bf supercritcal} regimes of $\mu_\vep$ will refer to $\mu\in (0,1)$, $\mu=1$ and $\mu>1$, respectively, in the sense of \eqref{lvasymp:KR}--\eqref{lvasymp:KK}. 

Our main results focus on the critical and supercritical regimes. In these settings, the asymptotic behavior of two-dimensional Brownian motion conditioned on large values of $A_{\varphi_\vep}(t)$ is governed by large deviation theory. Moreover, the characterizations of these asymptotic laws may be initiated by studying the asymptotic representations of the Feynman--Kac formulas in \eqref{def:FK0}. A direct way to clarify these relationships is to consider the following Esscher transforms, which are concentrated on the events that $\e^{\mu_\vep A_{\varphi_\vep(t)}}$ are large:
\begin{align}\label{Esscher}
\frac{\e^{\mu_\vep A_{\varphi_\vep}(t)}}{\E^{W}_{\vep z}[\e^{\mu_\vep A_{\varphi_\vep}(t)}]}\d \P^W_{\vep z}.
\end{align}
For similar Esscher transforms and discussions on additional connections to large deviation theory, see \cite{CT:15, Dagallier:25, Jack:20}.  
 
Explicit asymptotic representations of the Feynman–Kac formulas in \eqref{def:FK0} provide a basis for analyzing the limiting laws associated with \eqref{Esscher} and establish a broader framework by circumventing the need for time-dependent normalizing factors. The asymptotic results, discussed below, accommodate general initial conditions by taking
 \begin{align}\label{def:FK1}
\E^W_{ z}\big[\e^{\lv A_{\varphi_\vep}(t)}g(W_t)\big];
\end{align}
conversely, the expectation in \eqref{def:FK1} generates \eqref{def:FK0} when $\mu_\vep \equiv \lv$ via expansion (see \eqref{exp:12proof0} for details). The most significant of these earlier results pertains to the case
\begin{align}\label{nontrivial}
 \lla \varphi\rra=1 \quad \&\quad \mu=1.
\end{align} 
In this context, \cite{C:BES} establishes precise Feynman–Kac-type formulas for the limits and introduces a singular diffusion that demonstrates how an extremal property of two-dimensional Brownian motion emerges above subcriticality, as suggested by \eqref{Esscher}. For the construction of a conditional version of this diffusion, see Clark and Mian~\cite{CM:25}. 

Explicit asymptotic representations of the Feynman–Kac formulas discussed above were initially derived within the functional analytic framework associated with the Feynman–Kac semigroup in \eqref{def:FK1}. The corresponding operator serves as an approximation for constructing quantum-solvable models of the two-dimensional Laplacian subject to a singular perturbation at the origin, as reviewed in \cite{AGHH:Solvable}; the analysis also accounts for rescaled functions $\varphi_\vep$ with zero integral. In both cases $\lla \varphi \rra = 1$ and $\lla \varphi \rra = 0$, the limiting resolvents are explicitly solvable using 
\begin{align}
\lv\equiv  \frac{\mu\pi}{\log \vep^{-1}}+\frac{\lambda\pi }{\log^2 \vep^{-1}}.
\label{def:lvoointro}
\end{align}
In particular, the formula corresponding to \eqref{nontrivial}, restated in \eqref{finalgoal00}, shows that the limiting resolvents depend on $(\lambda, \varphi)$ through $\beta$, defined by 
\begin{align}\label{lim:beta}
\frac{\log \beta}{2}&=\pi\lla \mc E(\varphi)\rra +\frac{\log 2}{2}+\lambda - \gamma_{\sf EM},
\end{align}
where $\EM$ denotes the Euler–Mascheroni constant (see, for example, \cite{AGHH:Solvable, BC:2D, DR:Schrodinger, GQT}). Notably, Berini and Cancrini~\cite{BC:2D} utilized these results as essential tools in developing the singular SPDE known as the two-dimensional stochastic heat equation at criticality. For further background and recent developments on applying the Feynman–Kac formulas under consideration to the analysis of this SPDE, see \cite{CSZ:19, C:DBG, GQT}, among others.

A central approach in our analysis of the Feynman--Kac formulas in \eqref{def:FK0} beyond subcriticality is to compare the existing asymptotic representations with the two laws. For $\lla \varphi\rra=0$, it is noteworthy that the dichotomy in \eqref{def:muvep} is \emph{not} applied in those existing results, which are based solely on $\mu_\vep\equiv \lv$ for $\lv$ in \eqref{def:lvoointro}. Also, for $\lla\varphi\rra =1$, our starting point for invoking the two laws is that the blow-up of exponential moments arises only in the limiting case, and distributional descriptions of the two laws in \eqref{def:KR} and \eqref{def:KK} do not, by themselves, involve criticality. This observation suggests that the detailed structure of the additive functionals, ``just before collapsing to the limits,'' might still account for the asymptotic representations associated with criticality. In particular, the Kallianpur--Robbins law \eqref{def:KR} is expected to be central given the criterion in \eqref{def:murange}. Additionally, the special appearance of the logarithmic energy term $\lla \mc E(\varphi)\rra$ in \eqref{lim:beta} points to the relevance of the Kasahara--Kotani law in \eqref{def:KK}. Further heuristics are provided at the beginning of Section~\ref{sec:KRreg}.

In this paper, we establish extensions of the two laws by proving asymptotic representations of the Feynman--Kac formulas in \eqref{def:FK0} and suitable variants, using the following Green's function formulation with bounded nonnegative $g$ satisfying $\lla g\rra>0$:
\begin{align}
\label{def:Gf}
G^f_q\{g\}(z)&\,\defeq \int_0^\infty \e^{-q t}\E^W_z[\e^{A_f(t)}g(W_t)]\d t,\\
G_q\{g\}(z)&\,\defeq\,G_q^0\{g\}(z).\label{def:Gq0}
\end{align}
The following gives an overview of the main theorems of this paper: 
\begin{itemize}
\item For $\lla \varphi\rra=0$, Theorem~\ref{thm:=0} proves asymptotic representations of $G^{\lv^{1/2}\varphi_\vep}_q\{g\}(\vep z)$ as $\vep\to 0$ for all sufficiently large $q>0$, where, for $\mu\in (0,1]$ and $\lambda',\lambda\in \R$,
\begin{gather}
\llv \equiv  \frac{\mu\pi }{\lla \mc E(\varphi)\rra \log \vep^{-1}}+\frac{\lambda'\pi}{\lla \mc E(\varphi)\rra \log^{3/2} \vep^{-1}}+\frac{\lambda\pi}{\lla \mc E(\varphi)\rra \log^{2} \vep^{-1}},\label{def:llvintro}
\end{gather}
and we identify three parameter regimes arranged in order of increasing $\mu$ and $\lambda'$ such that each yields a different form of asymptotic representation. More specifically,
the first regime is defined by $\mu<1$, and the remaining two regimes correspond to $\mu=1$ and differ by the magnitude of $\lambda'$. 

A key feature of Theorem~\ref{thm:=0} is the adoption of the scaling $\log^{-1/2}\vep^{-1}$ through $\lv^{1/2}$, which stands in contrast to those used in previous solutions of related problems. Accordingly, while the first regime mentioned above recovers the Kasahara--Kotani law in \eqref{def:KK}, new limits emerge in the other two regimes. Moreover, when the third regime is applied, we obtain limiting expressions that capture essential features of the established limits under \eqref{nontrivial}. Thus, we interpret the third regime as the regime of \emph{criticality}, marked by two levels of phase transitions. For further details, see the discussion between Proposition~\ref{prop:Fvep=0} and Theorem~\ref{thm:=0}. 

The proof of Theorem~\ref{thm:=0}  employs a specialized method of \emph{asymptotic recursions}, described in Section~\ref{sec:KKregime}. This method also provides an alternative proof of the asymptotic representations for the Feynman--Kac formulas in \eqref{def:FK1} under \eqref{nontrivial} (see Section~\ref{sec:another}). Another proof of this kind in this paper is the proof of (\ref{eq:finalI:rad}) of Proposition~\ref{prop:5} for $\lla\varphi\rra=1$.

\item For $\lla \varphi\rra=1$, our main theorems are Theorems~\ref{thm:>0} and~\ref{thm:critres}: 
\begin{itemize}
\item Theorem~\ref{thm:critres} proves that, assuming \eqref{nontrivial},
\begin{align}\label{intro:thm:>0}
\lim_{\vep\to 0}G^{\lv(\oovarphi\uvep)_\vep}_q\{g\}(z)=\lim_{\vep\to 0}G^{\lv\varphi_\vep}_q\{g\}(z)
\end{align}
for all $z\neq 0$ and suitably large $q>0$, as the first limits recover the right-hand side of the following known formula: 
\begin{align}
\lim_{\vep\to 0} G_{q}^{\lv\varphi_\vep}\{g\} ( z)=G_{q}\{g\}(z)+\frac{2\pi}{\log (q/\beta)}G_{q}(z)G_{q}\{g\}(0).\label{finalgoal00}
\end{align}
Here, $(\oovarphi\uvep)_\vep$ rescales $\oovarphi\uvep$ as in \eqref{def:ffvep}, and $\oovarphi\uvep$, defined in \eqref{def:phiexp}, is a radially symmetric function transforming $\varphi$ and showing a first-order expansion in $\lv$ as a combination of the central mechanisms for proving the two laws. By these functions  $\oovarphi\uvep$, we regard Theorem~\ref{thm:critres} as a new approximation of the two-dimensional Laplacian singularly perturbed at the origin by using analytic characteristics of the two laws simultaneously. 

\item Theorem~\ref{thm:>0} presents the main tools used to prove Theorem~\ref{thm:critres}. The former theorem provides asymptotic formulas for the following time-changed exponential moments, which act as the ``disintegrated forms'' of $G^{\lv(\oovarphi\uvep)_\vep}_q\{g\}(z)$:
\begin{align}\label{multtime}
&\E^W_z[\e^{-q\vep^2 \tau^b_\ell+\lv A_{ \oovarphi\uvep}(\tau^b_\ell)}].
\end{align}
Here, $\lv$ satisfies \eqref{def:lvoointro} for \emph{any} given $\mu\in (0,\infty)$ and $\lambda\in \R$, and the time change $\tau^b_\ell$ is the inverse local time at level $b>0$ of the radial part $|W_t|$, which is a
two-dimensional Bessel process (${\bf BES}^{\bs 2}$).
\end{itemize} 
\end{itemize}

The theorems presented here have already spurred progress on related problems and opened new avenues of inquiry. In particular, the asymptotic recursion method introduced in this paper inspired the approach in~\cite{C:SHEAC}, where precise and explicit formulas were derived for the semimartingale characteristics of the two-dimensional stochastic heat equation at criticality. Furthermore, the radial symmetry technique for deriving the limits in \eqref{intro:thm:>0} and \eqref{finalgoal00} originated in \cite{C:MixDBG}. This approach subsequently inspired the skew-product diffusion framework in \cite{C:BES}, leading to the Feynman--Kac-type formula mentioned below \eqref{nontrivial}. Here, we complete and streamline that method. For future research, an interesting application would be to analyze the conditional laws in \eqref{Esscher} at the process level, focusing on the results for $\lla \varphi\rra=0$ and $\mu=1$ from Theorem~\ref{thm:=0}. An initial step could involve inverting the limiting Laplace transforms, as was done in \cite{ABD:Schrodinger,C:BES} for the case of \eqref{nontrivial}. Another question is whether our asymptotic results for the exponential moments in \eqref{multtime} within the supercritical regime ($\mu > 1$) remain valid upon restoring the original time variable $t$, as these results allow various scaling limits; see \eqref{lim:aux} for the corresponding asymptotic formula. For this, it is important to note a key distinction between the framework in \eqref{multtime} and those in \eqref{def:FK0} and \eqref{def:FK1}: the ``clock" $\tau^b_\ell$ stops whenever $|W_t|$ exists $b$. Nevertheless, our proof of Theorem~\ref{thm:critres} shows that allowing all $b > 0$ enables this restoration in the setting of \eqref{nontrivial}. \medskip 

\noindent {\bf Remainder of this paper.} Section~\ref{sec:mainresults} discusses in more detail the main theorems of this paper and the methods of proof. Most of the proofs are presented in Sections~\ref{sec:=0} and~\ref{sec:>0} according to $\lla \varphi\rra=0$ and $\lla\varphi\rra>0$, respectively.\qed  \medskip 

\noindent {\bf Frequently used notation.} 
We use $C(\Theta)$ to denote a strictly positive constant that depends only on $\Theta$, with such constants potentially varying between terms. In contrast, constants written as $\mathfrak{C}$ are special, and each carries a specific, fixed meaning. We write $A=\mathcal{O}(B)$ if $|A| \leq C|B|$ for some universal constant $C > 0$, where the upper bound involves $|B|$ rather than $B$ itself. Similarly, $A = \mathcal{O}_{\Theta}(B)$ if $|A| \leq C(\Theta)|B|$. Little-$\little{o}$ notations, denoted in the same style, are interpreted analogously. Finally, $\log^ab\,\defeq\,(\log b)^a$, where $\log$ uses base $\e$, and
\begin{align}
\|H\|_{h}&\,\defeq \,\sup\{|H(z)|;z\in D_h\}\quad (h:\R^2\to \R),
\label{def:Hh}
\end{align}
where $D_h$ is the smallest closed centered disc containing $\supp(h)$. \qed 

\section{Statement of main theorems}\label{sec:mainresults}
The main theorems in this paper study the cases of $\lla \varphi\rra=0$ and $\lla \varphi\rra=1$ separately, employing the scalings $\log^{-1/2}\vep^{-1}$ from the Kasahara--Kotani law in \eqref{def:KK} and $\log^{-1}\vep^{-1}$ from the Kallianpur--Robbins law in \eqref{def:KR}, respectively.  While our proofs for the two cases are largely independent, several relations exist between the methods, and each method features distinct characteristics, which we will discuss below. 

For the following discussion, recall once again that {\bf $\varphi$ is assumed to be bounded with compact support and satisfy $\lla |\varphi|\rra>0$}. {\bf Also, functions $g$, used as terminal conditions of Feynman--Kac formulas and the Green's functions, are assumed to be bounded nonnegative with $\lla g\rra> 0$}.  

\subsection{Occupation functions with zero integrals}\label{sec:KKregime}
We find this case particularly delicate. Beyond various technical issues arising from the lack of nonnegativity of $\varphi$, the nature of criticality is more intricate than anticipated. Specifically, based on the Kasahara--Kotani law in \eqref{def:KK} and the criterion in \eqref{KK:expmom}, one may expect that the critical limits of the associated Feynman--Kac formulas in \eqref{def:FK0} will follow from taking
\[
\mu_\vep\sim \sqrt{\frac{\pi}{\lla \mc E(\varphi)\rra \log \vep^{-1}}}.
\]
Recall that by the Kallianpur--Robbins law in \eqref{def:KR}, the choice of $\mu_\vep$ through \eqref{nontrivial} and \eqref{def:lvoointro} is an analogue that has been established for the critical limits under $\lla\varphi\rra=1$. It may thus seem that only a single phase transition occurs when $\lla \varphi\rra=0$. Nevertheless, our main theorem in this case reveals the existence of two phase transitions, together with asymptotic representations for the following Green's functions:
\begin{align}\label{def:F}
F_\infty(z)=F^{\vep,q}_\infty(z)\;\defeq \;G^{\llv^{1/2}\varphi_\vep}_q\{g\}(\vep z),
\end{align} 
where $\varphi_\vep$ and $G^f_\nu\{g\}(z)$ are defined by \eqref{def:ffvep} and \eqref{def:Gf}, and $\lv$ obeys \eqref{def:llvintro}.  The proof applies the method of asymptotic recursions discussed in Section~\ref{sec:intro}. 

In the remainder of Section~\ref{sec:KKregime}, we first explain how the asymptotic recursions emerge and how they are used to reveal the fundamental mechanisms behind the Green's functions, as the derivation may have broad implications, and we expect more to follow. After presenting the asymptotic recursions, we turn to the asymptotic representations in the main theorem. The exact nature of the criticality and phase transitions, however, involves additional technical details, which we address subsequently.

\subsubsection{Derivation of asymptotic recursions}\label{sec:ar-der}
The following lemmas, Lemma~\ref{lem:expansion=0} and Lemma~\ref{lem:1replace00}, play a central role in deriving the asymptotic recursions satisfied by $F_\infty$. For the statement of the first lemma, we set
\begin{align}
\mc K\{f\}(z)&\,\defeq \;\kappa \star (\varphi f)(z)=\int \kappa (z-z')\varphi(z')f(z')\d z',\label{def:K}\\
P_t(z,z')=P_t(z-z')&\,\defeq\, \frac{1}{2\pi t}\exp\left(-\frac{|z-z'|^2}{2t}\right),\label{def:Pt}\\
G_q(z,z')=G_q(z-z')&\,\defeq \int_0^\infty \e^{-q t}P_t(z,z')\d t, \label{def:G}
\end{align}
where $\kappa(\cdot)$ is defined in \eqref{def:energy}, $P_t(z,z')$ is the two-dimensional Brownian transition density, and with respect to $\chi\,\defeq\, (\mu,\lambda',\lambda)$ for given $\mu,\lambda',\lambda$ defining $\lv$ in \eqref{def:llvintro}, we fix
\[
\mbox{$\ol{\vep}(\chi)\in (0,1/4]$ such that }\lv>0,\; \forall\;\vep\in (0,\ol{\vep}(\chi)].
\]

\begin{lem}\label{lem:expansion=0}
For all $\vep\in (0,\ol{\vep}(\chi)]$ and $q\in(0,\infty)$ and $\|F^{\vep,q}_\infty\|_\varphi<\infty$, $F_\infty=F_\infty^{\vep,q}$ satisfies 
\begin{align}\label{exp:10}
F_\infty(z)=G_q\{g\}(\vep z)+ 
\mc T_\vep\{F_{\infty}\}(z),
\end{align}
where
\begin{align}\label{def:Tvep}
 \mc T_\vep\{h\}(z)\,\defeq\, 
\lv^{1/2}\int G_q( \vep z,\vep z')\varphi(z')h( z')\d z'.
\end{align}
Also, under Assumption~\ref{ass:q} stated below on the choice of $q$, for all small $\vep\in (0,\ol{\vep}(\chi)]$,
\begin{align}
\begin{split}\label{Finfty:expansion}
F_\infty(z)&=G_q\{g\}(0)
+
\frac{\lv^{1/2}}{\pi}\Biggl(\log \frac{2^{1/2}}{q^{1/2}\vep}-\EM\Biggr)\lla \varphi F_\infty\rra
+\lv^{1/2}\mc K\{ F_\infty\}(z)
+\mathcal O^F_\infty
\end{split}
\end{align}
uniformly in all $z\in \supp(\varphi)$,
where, for $\mathcal O_\Theta(B)$ defined at the end of Section~\ref{sec:intro},
\begin{align}
\mathcal O^F_\infty&\,\defeq\, \mc O_{\varphi,\chi}
[(1+q^{-1})\vep^{2/3}\|g\|_\infty]+\mathcal O_{\varphi,\chi}\{q\vep^{2}[\log (q^{1/2}\vep)^{-1}]\}\|F_{\infty}\|_{\varphi}.\label{def:Oinfty}
\end{align}
\end{lem}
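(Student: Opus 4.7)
The integral identity for $F_\infty$ is the Dyson--Duhamel formula for the Feynman--Kac semigroup. Writing $\e^{A_f(t)} - 1 = \int_0^t f(W_r)\e^{A_f(t) - A_f(r)}\d r$, taking expectations, applying the Markov property at time $r$, and integrating against $\e^{-qt}\d t$, one obtains
\begin{align*}
G^f_q\{g\}(z) = G_q\{g\}(z) + \int_{\Bbb C} G_q(z, w)f(w)G^f_q\{g\}(w)\d w,
\end{align*}
valid whenever the right-hand side is absolutely convergent. Specializing to $f = \llv^{1/2}\varphi_\vep$, evaluating at $\vep z$, and substituting $w \mapsto \vep w'$ (using $\varphi_\vep(\vep w')\vep^2 = \varphi(w')$) yields the identity for $F_\infty$; the compact support of $\varphi$, Assumption~\ref{ass:q}, and the local integrability of $G_q$ ensure absolute convergence.

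For the main expansion, I insert into $\mc T_\vep\{F_\infty\}(z)$ the small-argument asymptotics of the modified Bessel function. Since $G_q(z, z') = \pi^{-1}K_0(\sqrt{2q}|z - z'|)$ and $K_0(x) = -\log(x/2) - \EM + \mc O(x^2\log x^{-1})$ as $x \to 0$, for $z, w$ in the bounded set $\supp(\varphi)$ one has
\begin{align*}
G_q(\vep z, \vep w) = \frac{1}{\pi}\Bigl[\log \frac{2^{1/2}}{q^{1/2}\vep} - \EM\Bigr] + \kappa(z - w) + \mc O_{\varphi}\bigl(q\vep^2 \log (q^{1/2}\vep)^{-1}\bigr).
\end{align*}
The three terms of this expansion produce, respectively, the prefactor $\llv^{1/2}\pi^{-1}(\log(2^{1/2}/(q^{1/2}\vep)) - \EM)\lla \varphi F_\infty\rra$, the logarithmic term $\llv^{1/2}\mc K\{F_\infty\}(z)$ (by the definition of $\mc K$), and---after extracting $\llv^{1/2} = \mc O_{\varphi,\chi}(1)$ and using the compact support of $\varphi$---a remainder fitting inside the $\mc O_{\varphi,\chi}\{q\vep^2[\log (q^{1/2}\vep)^{-1}]\}\|F_\infty\|_\varphi$ component of $\mc O^F_\infty$.

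The last step is to replace $G_q\{g\}(\vep z)$ by $G_q\{g\}(0)$. Writing the difference as $\int_{\Bbb C}[G_q(\vep z, w) - G_q(0, w)]g(w)\d w$ and splitting the domain into $\{|w|\leq \vep^{1/3}\}$ (where $G_q(\cdot, w)$ is controlled via its logarithmic singularity on an area of $\mc O(\vep^{2/3})$) and its complement (where the mean value theorem together with $|\nabla_\xi G_q(\xi, w)| = \sqrt{2q}\pi^{-1}K_1(\sqrt{2q}|\xi - w|)$ and the pointwise bound $K_1(r) \leq C\min(r^{-1}, r^{-1/2}\e^{-r})$ applies), balancing the two contributions yields $|G_q\{g\}(\vep z) - G_q\{g\}(0)| \leq C(1+q^{-1})\vep^{2/3}\|g\|_\infty$ uniformly for $z$ in a bounded set---exactly the first error term in $\mc O^F_\infty$. \textbf{The main obstacle} is maintaining a clean separation of the two error types (those scaling with $\|g\|_\infty$ versus $\|F_\infty\|_\varphi$) and tracking the precise $q$-dependence of each, as the lemma will be iterated later in $\vep$ with $q$ held large and fixed.
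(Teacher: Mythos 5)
Your derivation of the integral identity \eqref{exp:10} and the subsequent Green-function expansion follows essentially the same route as the paper: Duhamel's formula for $\e^{A_f(t)}$, the Markov property, Laplace transform, and then the small-argument expansion \eqref{asymp:K0} of $K_0$ to extract the three leading terms. Where you genuinely diverge is in the continuity estimate for $G_q\{g\}$ near the origin: the paper (Lemma~\ref{lem:Gexp}) works in the time variable, slicing at $t=\delta_0=\vep^{2/3}|z'-z|$ and invoking a heat-kernel comparison bound imported from \cite{C:DBG}, whereas you work directly in the spatial variable via the gradient $\nabla_\xi G_q(\xi,w) = -\sqrt{2q}\,\pi^{-1}K_1(\sqrt{2q}|\xi-w|)\frac{\xi-w}{|\xi-w|}$. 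This is an attractive and more self-contained alternative: since $K_1$ is locally integrable in two dimensions, applying the mean-value theorem to $\xi\mapsto G_q\{g\}(\xi)$ globally (no near/far split at all) already yields $|G_q\{g\}(\vep z)-G_q\{g\}(0)|\leq \vep|z|\,\sqrt{2q}\,\pi^{-1}\|g\|_\infty\int_{\Bbb C}K_1(\sqrt{2q}|w|)\d w = \mathcal O_\varphi(\vep\, q^{-1/2}\|g\|_\infty)$, which is stronger than the claimed $\mathcal O_\varphi[(1+q^{-1})\vep^{2/3}\|g\|_\infty]$ and also avoids the external reference.

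One small but genuine quantitative slip in the way you actually wrote it: with the near/far split at $|w|\leq \vep^{1/3}$, the near-field piece is not $\mathcal O(\vep^{2/3})$ — integrating the logarithmic singularity of $K_0$ over a disk of radius $r$ produces $r^2\log r^{-1}$, not $r^2$, so your near-field bound is $\mathcal O(\vep^{2/3}\log\vep^{-1})$, a logarithm larger than the stated $\vep^{2/3}$. The far-field contribution, as you set it up, is $\mathcal O(\vep\,q^{-1/2})$ uniformly in the split radius, so the two pieces do not in fact balance at $\vep^{1/3}$; the balance point is closer to $R\sim\vep^{1/2}$. This is easily fixed — either move the split to $\vep^{1/2}$ (which gives $\vep\log\vep^{-1}\leq\vep^{2/3}$) or drop the split entirely and use the mean-value theorem globally as above. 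The rest of the argument, including the absorption of $\llv^{1/2}=\mathcal O_{\varphi,\chi}(1)$ into the remainder and the separation of the $\|g\|_\infty$- and $\|F_\infty\|_\varphi$-scaled error terms, is correct and matches what the paper does.
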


\begin{lem}\label{lem:1replace00}
Under Assumption~\ref{ass:q}, the following holds for all small $\vep\in (0,\ol{\vep}(\chi)]$:
\begin{align}
\lla \varphi F_\infty\rra&=\llv^{1/2}\lla \mathcal E(\varphi)F_\infty\rra 
+\mathcal O^F_\infty,
\label{1replace0}\\
\begin{split}
F_\infty( z'')-F_\infty( z')&=\llv^{1/2}\mc K\{F_\infty\}(z'')-\llv^{1/2}\mc K\{ F_\infty\}(z') +\mathcal O^F_\infty,
\label{F1replace0}
\end{split}
\end{align}
uniformly in $z'',z'\in \supp(\varphi)$. Here, $\mathcal E\{f\}(z')\,\defeq\,\varphi(z')\int (\pi^{-1}\log|z'-z''|^{-1})\varphi(z'')\d z''$, as defined in \eqref{def:energy}.
\end{lem}

See Section~\ref{sec:expansion} for the proofs of these two lemmas. In particular, the proof of Lemma~\ref{lem:1replace00} is covered in Lemma~\ref{lem:1replace0} by taking $X\equiv F$ and $n\equiv \infty$ therein. Additionally, the assumption regarding the choice of $q$, stated below, pertains to a priori bounds on the Green's functions, employing the notation $\|H\|_h$ as defined at the end of Section~\ref{sec:intro}. Note that it does \emph{not} assume bounds on the limiting values:

\begin{ass}\label{ass:q} 
$q\in (0,\infty)$ is such that $\|F^{\vep,q}_\infty\|_{\varphi}<\infty$ for all small $\vep\in (0,\ol{\vep}(\chi)]$.  \qed 
\end{ass}

We use Lemma~\ref{lem:expansion=0} mainly for the first-order asymptotic expansion of $F_\infty^{\vep,q}$ in $\vep$, given by \eqref{Finfty:expansion}, while Lemma~\ref{lem:1replace00} allows us to asymptotically \emph{close} this first-order expansion. Specifically, to achieve the asymptotic closure, we use \eqref{1replace0} to substitute $\lla \varphi F_\infty\rra$ in \eqref{Finfty:expansion} with $\lv^{1/2}\lla \mc E(\varphi)F_\infty\rra$. Next, we apply \eqref{F1replace0} for a further asymptotic replacement, so that for all $z \in \supp(\varphi)$,
 \begin{align*}
 \lla \mathcal E(\varphi)F_\infty\rra&=
\lla \mathcal E(\varphi)\rra  F_\infty(z)+\lv^{1/2}\lla \mathcal E(\varphi)\mc K\{ F_\infty\}\rra-\llv^{1/2}\lla \mathcal E(\varphi)\rra\,\!\mc K\{F_\infty\}(z)+\mathcal O^F_\infty.
 \end{align*}
Moreover, $F_\infty$ in the second and third terms on the right-hand side can be replaced by the \emph{number} $F_\infty(z)$ by using \eqref{F1replace0} again. Such applications of \eqref{F1replace0}
can be repeated for the term $\lv^{1/2}\mc K\{ F_\infty\}(z)$ on the right-hand side of \eqref{Finfty:expansion}. 

The overall effect of the above replacements is the following asymptotic recursion:
for all $z\in \supp(\varphi)$,
 \begin{align}\label{Finfty:X}
F_\infty( z)=G_q\{g\}(0)+ (\mu-s_\vep  \mathfrak C) F_\infty( z)+\mathcal O_{\varphi,\chi,q}(s'_\vep \|F_\infty\|_\varphi)+\mathcal O_{\varphi,\chi,q,g}(s''_\vep).
\end{align}
Here, $\mathfrak C>0$ is an explicitly expressible constant, and $s_\vep,s'_\vep,s''_\vep\to 0$ with
\begin{align}\label{s'svep}
s'_\vep/s_\vep\to 0\quad\mbox{ as $\vep\to 0$.}
\end{align} 
In particular, for $\mu=1$, \eqref{Finfty:X} after rearrangement gives
 \begin{align}
 \begin{split}\label{Finfty:X2}
&( s_\vep \mathfrak C) F_\infty(z)= [G_q\{g\}(0)+\mathcal O_{\varphi,\chi,q,g}(s''_\vep)]+\O_{\varphi,\chi,q}(s'_\vep \|F_\infty\|_\varphi)\\
&\Longrightarrow 
\lim_{\vep\to 0} s_\vep F_\infty(z)=\frac{G_q\{g\}(0)}{\mathfrak C}\mbox{ uniformly over } z\in \supp(\varphi).
\end{split}
\end{align}
In more detail, the deduction in \eqref{Finfty:X2} requires the following consideration, since Assumption~\ref{ass:q} is weaker than $\limsup_{\vep\to 0}\|F^{\vep,q}_\infty\|_{\varphi}<\infty$: 

\begin{lem}\label{lem:ss}
Under \eqref{s'svep} and the sufficient condition in \eqref{Finfty:X2}, we have the convergence
$\O_{\varphi,\chi,q}(s'_\vep \|F_\infty\|_\varphi)\to 0$ as $\vep\searrow 0$.
\end{lem}
\begin{proof} 
Since $g\geq 0$ and $\lla g\rra> 0$ by assumption, it holds that $G_q\{g\}>0$ and $F_\infty> 0$. Hence, by \eqref{s'svep}, the equation in the sufficient condition of \eqref{Finfty:X2}, after rearrangement, implies $\limsup_{\vep\to 0}s_\vep\|F_\infty\|_\varphi<\infty$. This property leads to  $\O_{\varphi,\chi,q}(s'_\vep \|F_\infty\|_\varphi)=\O_{\varphi,\chi,q}((s'_\vep/s_\vep)\cdot s_\vep \|F_\infty\|_\varphi)\to 0$ by \eqref{s'svep} again. 
\end{proof}

Proposition~\ref{prop:Fvep=0} below proves asymptotic representations of the Green's functions by summarizing the precise values of $\mathfrak C$ and orders of $s_\vep$ in \eqref{Finfty:X} for two cases assuming $\mu=1$. The precise values of $\mathfrak C$ for $\mu\in (0,1)$ are also obtained. 
 See Section~\ref{sec:multi} for the proof. In particular, this proof is where we find the form of criticality more delicate than anticipated, as taking $\mu=1$ can lead to two different explicit expressions for the asymptotic expressions, given by \eqref{lim:=0-2} and \eqref{lim:=0-3}. 

\begin{prop}\label{prop:Fvep=0}
Set
\begin{align}
\mathfrak C(\varphi)&\,\defeq \,\frac{\pi^{1/2}\lla \mc E(\varphi)\mc K\{\1\}\rra}{\lla \mc E(\varphi)\rra^{3/2}},\label{def:Cphi}\\
\mathfrak C_{\lambda'}&\,\defeq -\lambda'-\mathfrak C(\varphi),\label{def:Clambda'}\\
\mathfrak C_{\lambda,q}&\,\defeq -\lambda -\log \frac{2^{1/2}}{q^{1/2}}+\EM+\frac{3\pi\lla \mc E(\varphi)\mc K\{\1\}\rra^2}{2\lla \mc E(\varphi)\rra^{3} }-\frac{\pi\lla \mc E(\varphi)\mc K^2\{\1\}\rra }{\lla \mc E(\varphi)\rra^2 }.\label{def:Clambdaq}
\end{align} 
Then under Assumption~\ref{ass:q}, the following limits hold uniformly in $z\in \supp(\varphi)$: 
\begin{subequations}\label{lim:=0}
\begin{align}
\begin{split}\label{lim:=0-1}
\displaystyle \lim_{\vep\to 0}G^{\lv^{1/2}\varphi_\vep}_q\{g\}(\vep z)=\frac{G_q\{g\}(0)}{1-\mu},&\quad  \mbox{if $\mu\in (0,1)$},
\end{split}\\
\begin{split}
\displaystyle \lim_{\vep\to 0}\frac{G^{\lv^{1/2}\varphi_\vep}_q\{g\}(\vep z)}{\log^{1/2} \vep^{-1}}
=\frac{G_q\{g\}(0)}{\mathfrak C_{\lambda'}},&\quad  \mbox{if $\mu=1$ }\mathfrak C_{\lambda'}>0,\label{lim:=0-2}
\end{split}\\
\begin{split}
\displaystyle \lim_{\vep\to 0}\frac{G^{\lv^{1/2}\varphi_\vep}_q\{g\}(\vep z)}{\log \vep^{-1}}=\frac{G_q\{g\}(0)}{\mathfrak C_{\lambda,q}}, &\quad \mbox{if $\mu=1$, }\mathfrak C_{\lambda'}=0,
\label{lim:=0-3}
\end{split}
\end{align}
\end{subequations}
where the limit in \eqref{lim:=0-3} also requires that $q$ is large enough such that $\mathfrak C_{\lambda,q}>0$. 
\end{prop}

The three limits in \eqref{lim:=0} have the following features. The last two limits \eqref{lim:=0-2} and \eqref{lim:=0-3} are new and impose requirements on $\mathfrak C_{\lambda'}$ that depend in a nontrivial way on $\varphi$. This is because the values of $\mathfrak C(\varphi)$ for all $\varphi$ with $\lla\varphi\rra=0$ and $\lla |\varphi|\rra=0$ span $\R$, as we demonstrate in Section~\ref{sec:range}. Additionally,
the first two limits \eqref{lim:=0-1} and \eqref{lim:=0-2} both show the essential role of the exponential moments with $g\equiv 1$. For example, the limit in \eqref{lim:=0-1} can be restated as
\begin{align}\label{eq:expmom-g}
\lim_{\vep\to 0}G^{\lv^{1/2}\varphi_\vep}_q\{g\}(\vep z)= \frac{G_q\{g\}(0)}{1-\mu}=\int_0^\infty \e^{-qt} \E^W_0[\e^{\sqrt{\mu}\sqrt{2} B_{\mathbf e}}]\E^W_0[g(W_t)]\d t.
\end{align}
In contrast, the third limit in \eqref{lim:=0} is of a different nature. The prefactor $1/\mathfrak C_{\lambda,q}$ is time-dependent due to its explicit dependence on $q$. Moreover, we interpret this limit as critical, since $1/\mathfrak C_{\lambda,q}$ in \eqref{lim:=0-3} can be expressed as $2/(\log q+\tilde{a})$. This limit mirrors the nontrivial limits previously established under \eqref{nontrivial}, to be recalled in \eqref{finalgoal00}, where the prefactors $2/\log (q/\beta)=2/(\log q+\log 1/\beta)$ are the main characteristics.

\subsubsection{Main theorem for $\bs \lla\bs \varphi\bs \rra\bs =\bs 0$}
The theorem is presented below. The three conditions on $\mu$, $\mathfrak C_{\lambda'}$, and $\mathfrak C_{\lambda,q}$ in \eqref{lim:=0} correspond to those in this theorem, with the added requirement that $\lla \mc E(\varphi)\mc K\{\1\}\rra\geq 0$. Furthermore, the two phase transitions introduced at the start of Section~\ref{sec:KKregime} are now characterized by the conditions on the three limits in this theorem.

\begin{thm}\label{thm:=0}
Let $\varphi$ be bounded with a compact support and satisfy $\lla \varphi\rra=0$ with $\lla |\varphi|\rra> 0$, $g$ be nonnegative bounded with $\lla g\rra> 0$, and $\llv$ be given by
\begin{align}\label{def:llv}
\begin{split}
\llv \equiv \frac{\mu\pi }{\lla \mc E(\varphi)\rra \log \vep^{-1}}+\frac{\lambda'\pi}{\lla \mc E(\varphi)\rra \log^{3/2} \vep^{-1}}+\frac{\lambda\pi}{\lla \mc E(\varphi)\rra \log^{2} \vep^{-1}}
\end{split}
\end{align}
for fixed $\mu\in (0,1]$ and $\lambda',\lambda\in \R$. Then  for all large enough $q>0$, the following holds:
\begin{itemize}
\item \eqref{lim:=0-1} if $\mu<1$;
\item \eqref{lim:=0-2} if $\mu=1$, $\mathfrak C_{\lambda'}>0$, and $\lla \mc E(\varphi)K\{\1\}\rra\geq 0$;
\item \eqref{lim:=0-3} if $\mu=1$, $\mathfrak C_{\lambda'}=0$, and $\lla \mc E(\varphi)K\{\1\}\rra\geq  0$.
\end{itemize}
Moreover, these limits
hold uniformly on compacts in $z\in \R^2$.
\end{thm}

Theorem~\ref{thm:=0} does not follow immediately from Proposition~\ref{prop:Fvep=0} because we must still validate the limits in Proposition~\ref{prop:Fvep=0} by proving the existence of $q$ satisfying Assumption~\ref{ass:q}. The issue in verifying this assumption is that $\varphi$ is not nonnegative. For example, the Gr\"onwall-type bound for an analogue of $F_\infty$ in \cite[Lemma~5.5]{C:DBG} does not appear to extend to the case of Theorem~\ref{thm:=0} to yield sharp bounds. To address this, we revisit \eqref{exp:10}, considering the following Picard iteration for approximate Green's functions which are easier to bound:
\begin{align}
\begin{split}
F_n(z)\equiv 0,\;n=-1,-2,\cdots,\mbox{ and }
 F_n(z)\,\defeq\,
G_q\{g\}(\vep z)+\mc T_\vep\{F_{n-1}\}(z),\; n\in \Bbb Z_+,\label{def:Fc}
\end{split}
\end{align}
where the definitions for $n\in \Bbb Z_+$ are in the inductive sense.
Then, observe the following:
\begin{itemize}
\item For all $n\in \Bbb Z_+$, $\|F_n\|_{\varphi}<\infty$ and $F_{n}(z)=\sum_{j=0}^n\mc T_\vep^j \{z'\mapsto G_q\{g\}(\vep z')\}( z)$.
\item If, for fixed $\vep>0$, $\{F_n\}$ converges to $F$ as $n\to\infty$ under $\|\cdot\|_{\varphi}$, then by dominated convergence, $F(z)=G_q\{g\}(\vep z)+\mc T_\vep\{ F\}(z)$ on $\supp(\varphi)$. Note that this equation takes the same form as \eqref{exp:10}.
\end{itemize}

The following proposition completes our quest to bound $F_\infty$; see Section~\ref{sec:geo} for the proof. The condition $\lla \mc E(\varphi)\mc K\{\1\}\rra\geq 0$ mentioned above arises in one step of its proof (see Lemma~\ref{lem:recF}). However, we did not explore the consequence of $\lla \mc E(\varphi)K\{\1\}\rra< 0$, as this inequality could be reversed by substituting $\varphi$ with $-\varphi$.

\begin{prop}\label{prop:gbdd}
{\rm (1$\cc$)} Set $D_n\,\defeq\, F_n-F_{n-1}$ for all $n\in \Bbb Z$, and $\chi\,\defeq\, (\mu,\lambda',\lambda)$.
There exists $q_1(\varphi,\chi)\in(0,\infty)$ 
such that for all $q\geq q_1(\varphi,\chi)$,   
$\mathfrak C_{\lambda,q}>0$ and the following bounds hold for all sufficiently small $\vep\in (0,\ol{\vep}(\chi)]$ depending on $q$ and for all $n\in \Bbb Z_+$: 
\begin{align*}
\|D_n\|_\varphi\leq 
\begin{cases}
\displaystyle C_0\left|1-\left(\frac{1-\mu}{8}-\frac{C_1}{\log^{1/2} \vep^{-1}}
\right)\right|^n,&\mbox{if $\mu\in (0,1)$, }\\
\vspace{-.2cm}\\
\displaystyle C_0\left|1-\frac{C_2}{\log^{1/2} \vep^{-1}}
\right|^n,&\mbox{if $\mu=1$, $\lla \mc E(\varphi)\mc K\{\1\}\rra\geq 0 $, $\mathfrak C_{\lambda'}>0$},\\
\vspace{-.2cm}\\
\displaystyle C_0\left|1-\frac{C_3}{\log \vep^{-1}}
\right|^n,&\mbox{if }\mu=1,\,\lla \mc E(\varphi)\mc K\{\1\}\rra\geq 0,\, \mathfrak C_{\lambda'}=0,
\end{cases}
\end{align*}
where $C_0=C(\varphi,\chi,q,g)$, and $C_j=C(\varphi,\chi,q)$ for $j=1,2,3$. \medskip

\noindent {\rm (2$\cc$)} Moreover, for all $q\geq q_1(\varphi,\chi)$ and sufficiently small $\vep\in (0,1)$ depending on $q$, $\|F_\infty\|_\varphi<\infty$ and the $\|\cdot\|_\varphi$-limit $F$ of $\{F_n\}$ exists and satisfies $F=F_\infty=\sum_{j=0}^\infty\mc T_\vep^j \{z'\mapsto G_q\{g\}(\vep z')\}$ on $\supp(\varphi)$.  
\end{prop}

\begin{proof}[End of the proof of Theorem~\ref{thm:=0}]
By the strong Markov property of Brownian motion at the first hitting time of $\supp(\varphi_\vep)$ when the initial condition is $\vep z\notin \supp(\varphi_\vep)$, it suffices to prove that the convergences in \eqref{lim:=0} hold uniformly in $z\in \supp(\varphi)$. In this case, for $q_1(\varphi,\chi)$ from Proposition~\ref{prop:gbdd}, any $q\geq q_1(\varphi,\chi)$ satisfies Assumption~\ref{ass:q} by the geometric bounds for $\|D_n\|_\varphi$ and the formula  $\sum_{n=0}^\infty(1-\rho_0)^n=\rho_0^{-1}$, $\rho_0\in (0,1)$. The required result now follows from Proposition~\ref{prop:Fvep=0}.
\end{proof}

\subsection{Occupation functions with unit integrals}\label{sec:KRreg}
To make the possible connections between the two laws and the limits of $G_{q}^{\lv\varphi_\vep}\{g\} (z)$ under the condition of \eqref{nontrivial} more precise, we focus on the {\bf radial part} $\overline{f}$ and the {\bf radially asymmetric part} $\widehat{f}$ of a function $f$. Here,
\begin{gather}
\overline{f}(z)\,\defeq\, \frac{1}{2\pi}\int_{-\pi}^\pi f(|z|\e^{\i \theta})\d \theta\quad (\i\,\defeq\,\sqrt{-1}),\label{def:ophi}\\
\widehat{f}(z)\,\defeq\, f(z)-\overline{f}(z),\label{def:hphi}
\end{gather}
where we view the two-dimensional space as $\Bbb C$. 
A straightforward approach to involving these parts may be to examine scaling limits of the additive functionals $A_{\overline{\varphi}_\vep}(t)$ and $A_{\widehat{\varphi}_\vep}(t)$ from the following decomposition:
 \begin{align}\label{CFH}
 A_{\varphi_\vep}(t)=A_{\overline{\varphi}_\vep}(t)+A_{\widehat{\varphi}_\vep}(t).
\end{align}
This approach is motivated by the condition $\lla \varphi\rra=1$ from \eqref{nontrivial}, which implies
$\lla\overline{\varphi}\rra =1$ and $ \lla \widehat{\varphi}\rra=0$, so $\overline{\varphi}$ and $\widehat{\varphi}$ become suitable for applying the two laws. The main difficulty in deriving the asymptotic forms of $G_{q}^{\lv\varphi_\vep}\{g\} ( z)$ using \eqref{CFH}, however, is that the associated $\lv$ scales according to \eqref{lvasymp:KR}, whereas demonstrating the convergence of $A_{\widehat{\varphi}_\vep}(t)$ requires a fundamentally different scaling as in \eqref{lvasymp:KK}; see also \cite{CFH}. 

Rather than relying on \eqref{CFH}, our main theorems for the case $\lla \varphi\rra=1$, as outlined  at the end of Section~\ref{sec:intro}, show that the limits are based on the following functions:
\begin{align}\label{def:phiexp}
\oovarphi\uvep\,\defeq\; \overline{\varphi}+\lv\widecheck{\varphi},
\end{align} 
where, by using $\mathcal E(\cdot)$ defined in \eqref{def:energy}, 
\begin{align}
\widecheck{f}(z)&\;\defeq \;\overline{\mathcal E(\widehat{f})}(z)=\frac{1}{2\pi}\int_{-\pi}^\pi \left(-\frac{1}{\pi}\widehat{f}(|z|\e^{\i \theta})\int_{\Bbb C}\widehat{f}(z')\log ||z|\e^{\i \theta}-z'|\d z'\right)\d \theta.\label{def:cphi}
\end{align}
The above formulation of $\oovarphi\uvep$ is motivated by the fact that Kasahara and Kotani~\cite{KK} used the components $\overline{\varphi}$ and $\widecheck{\varphi}$ as occupation functions in their original process-level proofs of the two laws \eqref{def:KR} and \eqref{def:KK}. In contrast, we will use \eqref{def:phiexp} by combining $\overline{\varphi}$ and $\widecheck{\varphi}$, rather than applying them separately. Our analysis will also be conducted at the expectation level and proceed independently of \cite{KK}.

\subsubsection{Disintegration of Green's functions}\label{sec:DGF}
Theorem~\ref{thm:excursion}, presented below, is the central tool for establishing our main results for the case of $\lla \varphi\rra=1$, and its applicability extends beyond the critical case. This theorem is based on excursion theory and applies to any general Markov process $\{X_t\}$ that meets Assumption~\ref{ass:X}; see \cite[Chapter~IV]{Bertoin} and \cite{Blumenthal} for excursion theory. To state the theorem and the assumption, we define
\begin{align}
U^f_\nu\{g\}(a)\;\defeq \int_0^\infty \e^{-\nu t}\E^X_a[\e^{A^X_f(t)}g(X_t)]\d t\label{def:Uf}
\end{align}
for the Green's function associated with the following additive functional:
\begin{align}
A^X_f(t)\,\defeq \int_0^t f(X_s)\d s.\label{def:AfX}
\end{align}
Here and in what follows, $\P^X_a[\cdot]\,\defeq\,\P[\cdot|X_0=a]$, and $\E^X_a$ is similarly defined. The resolvents of the transition probabilities are given by the following operators: 
\begin{align}\label{def:U0}
U_\nu\,\defeq\, U^0_\nu.
\end{align}

\begin{ass}\label{ass:X}
With respect to a filtration $\{\G_t\}$ satisfying the usual conditions, $\{X_t\}$ is a $\{\G_t\}$-strong Markov process with c\`adl\`ag paths such that the state space $E$ is 
a complete separable metrizable topological space, and the following conditions hold:
\begin{itemize}
\item [(a)] Any point $b\in E$ is regular: $\P^X_b(\inf\{t>0;X_t=b\}=0)=1$.
\item [(b)] Any point $b\in E$ is instantaneous: $\P^X_b(\inf\{t\geq 0;X_t\neq b\}=0)=1$.
\item [(c)] There exists a $\sigma$-finite measure $\mathbf m$ on $(E,\B(E))$ such that for all $a\in E$ and $\nu\in(0,\infty)$, the $\nu$-resolvent measures 
$\Gamma\mapsto  U_\nu\{\1_\Gamma\}(a)$
are absolutely continuous with respect to $\mathbf m$.
Moreover, we can choose the densities $b\mapsto U_\nu(a,b)$ of $\Gamma\mapsto  U_\nu\{\1_\Gamma\}(a)$ such that $U_\nu(a,b)$ is jointly continuous in $(a,b)\in E\times E$.
\item [(d)] For any initial condition $a\in E$, there exists a family $\{L^b_t;b\in E,t\geq 0\}$ of random variables jointly continuous in $(b,t)$ such that the occupation times formula holds: 
\begin{align}\label{oct}
\int_0^t f(X_s)\d s=\int_E f(b) L^b_t\mathbf m(\d b)
\end{align}
for all nonnegative $f\in \B(E)$,
with $\P^X_a$-probability one.\hfill $\blacksquare$
\end{itemize}
\end{ass}

Assumption~\ref{ass:X} ensures that the excursion theory applies to $\{X_t\}$ and that local times are well-defined at every level $b$. This assumption also enables the following properties:
\begin{itemize}
\item At any given level $b\in E$, the excursions are ``Brownian-motion-like'' in the sense of no isolated visits of $b$ by condition (a) or holding periods by condition (b).
\item Condition (c) implies that the inverse local time under $\P_b$ is a subordinator with Laplace exponent $U_\nu(b,b)^{-1}$ \cite[Theorem~3.6.3, p.85]{MR:Markov}. See Theorem~\ref{thm:excursion} (2$\cc$) for the strict positivity of $U_\nu(b,b)$.
\item Condition (d)  implies that $\{L^b_t;t\geq 0\}$ is a Markovian local time at level $b$  \cite[Proposition~5 on p.111]{Bertoin}. See \cite[Section~3.7, pp.98--105]{MR:Markov} for sufficient conditions of (d). 
\end{itemize}

To state Theorem~\ref{thm:excursion}, we will also use the inverse local times:
\begin{align}
\tau^b_\ell\,\defeq\inf\{t\geq 0;L^b_t>\ell\},\quad \ell\geq 0,\label{def:invLT}
\end{align}
and the first passage functionals:
\begin{align}\label{def:Pf}
Q^f_{\nu,b}(a)&\,\defeq \,\E^X_a[\e^{-\nu T_b+A^X_f(T_b)};T_b<\infty]\quad(Q_{\nu,b}\,\defeq\, Q^0_{\nu,b}).
\end{align}
Here, $\E[Z;\Gamma]\,\defeq\, \E[Z\1_\Gamma]$, and
\begin{align}\label{def:TbX}
T_b=T_b(X)&\,\defeq \inf\{t\geq 0;X_t=b\}.
\end{align}

\begin{thm}\label{thm:excursion}
Let $\{X_t\}$ satisfy Assumption~\ref{ass:X}. Fix a real-valued, locally bounded function $f\in \B(E)$, a nonnegative function $g\in \B(E)$, and $\nu\in (0,\infty)$. 
\begin{itemize}
\item [\rm (1$\cc$)] For all $a\in E$, it holds that
\begin{align}
U^f_\nu\{g\}(a)
&=\int_E \frac{Q^f_{\nu,b}(a)U_\nu(b,b)}{\max\big\{1-U_\nu\{fQ^f_{\nu,b}\}(b),0\big\}}g(b)\mathbf m(\d b),\label{formula:excursion}\\
\E_a^X[\e^{-\nu\tau^b_\ell+A^X_f(\tau^b_\ell)}]&=Q^f_{\nu,b}(a)\exp\Biggl\{-\ell\Biggl(\frac{1-U_{\nu}\{f Q^{f}_{\nu,b}\}(b)}{U_\nu(b,b)}\Biggr)\Biggr\},\quad \ell\geq 0,\label{kernel:expexp}
\end{align}
where the usual convention of $1/0=\infty$ and $\infty\cdot 0=0$ applies to the right-hand side of \eqref{formula:excursion} when $\max\{1-U_\nu\{fQ^f_{\nu,b}\}(b),0\}=0$. 

\item [\rm (2$\cc$)] For all $b\in E$, $Q^f_{\nu,b}$ in \eqref{formula:excursion} satisfies the following recursive equation: 
\begin{align}\label{psiab:dec0}
Q^f_{\nu,b}(a)+Q_{\nu,b}(a)U_\nu\{f Q^f_{\nu,b}\}(b)=Q_{\nu,b}(a)+U_\nu\{fQ^f_{\nu,b}\}(a),\quad a\in E.
\end{align}
Also, $Q_{\nu,b}(a)=U_\nu(a,b)/U_\nu(b,b)$, where $U_\nu(b,b)>0$.
\end{itemize}
Here, \eqref{formula:excursion} and \eqref{psiab:dec0} use the property that $U_\nu\{fQ^f_{\nu,b}\}(a)$ is well-defined and $(-\infty,\infty]$-valued  for all $a,b\in E$. Hence, whenever $U_\nu\{fQ^f_{\nu,b}\}(b)<\infty$, 
\begin{align}\label{psiab:dec}
Q^f_{\nu,b}(a)=Q_{\nu,b}(a)\times [1-U_\nu\{f Q^f_{\nu,b}\}(b)]+U_\nu\{fQ^f_{\nu,b}\}(a),
\end{align}
which follows by rearranging \eqref{psiab:dec0}.
\end{thm}

The relations below support parts (1$\cc$) and (2$\cc$) of Theorem~\ref{thm:excursion}. For part (1$\cc$), \eqref{formula:excursion} follows from \eqref{kernel:expexp} via the following distintegration formula:
\begin{align}
U^f_\nu\{g\}(a)&=\int_E\int_0^\infty \E^X_{a}[\e^{-\nu\tau^b_\ell+A^X_f(\tau^b_\ell)}]\d \ell g(b)\mathbf m(\d b).\label{dec:Gf}
\end{align}
In part (2$\cc$), \eqref{psiab:dec0} gives an equation for $Q^f_{\nu,b}$ in terms of the resolvents and $Q_{\nu,b}$ as coefficients, with $Q_{\nu,b}(a)$ expressible as ratios of resolvent densities. This relation allows us to view Theorem~\ref{thm:excursion} as providing explicit solutions to the Green's functions $U^f_\nu\{g\}(a)$ in terms of the resolvents $U_\nu$.

\begin{rmk}[Kac's original theorem]
\label{rmk:excursion}
For the special case of a one-dimensional Brownian motion $\{B_t\}$, \eqref{formula:excursion} may be reminiscent of Kac's original theorem~\cite{Kac} for the Feynman--Kac formula. The theorem from \cite{Kac} can be reformulated as follows: let $\nu \geq 0$, and let $f$ and $g$ be nonnegative with $f$ locally bounded:
\begin{align}\label{Kac}
\int_0^\infty \e^{-\nu t}\E^B_0[\e^{-\int_0^t f(B_s)\d s}g(B_t)]\d t=\int_{-\infty}^\infty G^{-f}_\nu(0,b)g(b)\d b.
\end{align}
The Green's function $b\mapsto G^{-f}_\nu(0,b)$ is the unique solution to
\[
\frac{1}{2}G''(b)-[\nu+f(b)]G(b)=0,\quad b\neq 0,
\]
subject to the following three conditions: (i) $G'(b)$ exists for all $b\neq 0$ and is uniformly bounded; (ii) $G$ vanishes at $\pm \infty$; and (iii) the following jump condition holds:
\begin{align}\label{cond:jump}
\lim_{b\searrow 0}G'(b)-\lim_{b\nearrow 0}G'(b)=-2.
\end{align}
Moreover, in the context of \eqref{Kac}, by Tanaka's formula, one may choose $\mathbf m(\d b)$ in \eqref{formula:excursion} to be $\d b$. While excursion theory provides an alternative proof of \eqref{Kac} in \cite{JPY}, we will clarify at the start of Section~\ref{sec:level} how our proof of Theorem~\ref{thm:excursion} differs.
\hfill $\blacksquare$ 
\end{rmk}

\subsubsection{Two main theorems for $\bs \lla\bs \varphi\bs \rra\bs =\bs 1$}\label{sec:=1main}
Theorem~\ref{thm:>0}, which builds on Theorem~\ref{thm:excursion}, provides the first main theorem and serves as our tool for proving the second main theorem, Theorem~\ref{thm:critres}, discussed later. The central part of Theorem~\ref{thm:>0}, specifically part (2$\cc$), is established for bivariate exponential moments of time-changed additive functionals as in \eqref{kernel:expexp} beyond the critical case, using the two-dimensional Bessel process ($\BES^2$). For the statement of Theorem~\ref{thm:>0}, we specialize the notation introduced before Theorem~\ref{thm:excursion} as follows: $R_t(a,b)$ denotes the transition densities of $ \BES^2$, and $V_\nu(a,b)$ denotes the densities of the resolvents. Hence,
\begin{align}\label{UG:polar}
\begin{split}
R_t(a,b)&= b\int_{-\pi}^\pi P_t(a,b\e^{\i \theta})\d \theta,\quad
V_{\nu}(a,b)=\int_0^\infty \e^{-\nu t}R_t(a,b)\d t,\quad a,b\geq 0,
\end{split}
\end{align}
where $P_t(z,z')$ is defined in \eqref{def:Pt}. We also set 
\begin{gather}
V_\nu\{h\}(a)\,\defeq \int_0^\infty V_\nu(a,b)h(b)\d b,\label{def:V}\\
T_b=T_b(\rho)\,\defeq\inf\{t\geq 0;\rho_t=b\},\quad 
R^f_{\nu,b}(a)\,\defeq\,\E^\rho_a[\e^{-\nu T_b+A_f(T_b)}],\quad R_{\nu,b}\,\defeq\, R^0_{\nu,b},\label{def:TR}
\end{gather}
where $\{\rho_t = |W_t|\}$ is a version of the $\BES^2$ process, and $f(\cdot)$ will be chosen to be radially symmetric so that, 
for $A_f(t)$ as defined in \eqref{def:Af}, we also have
\[
A_f(t)=\int_0^t f(\rho_s)\d s. 
\]
In this context, the semimartingale local times $\{L^b_t\}$ of $\{\rho_t\}$ at levels $b \in (0, \infty)$ exist as nonzero processes. Specifically, these local times are defined via Tanaka's formula \cite[(1.2) Theorem on p.222]{RY} for the SDE $\mathrm{d}\rho_t = (2\rho_t)^{-1}\mathrm{d} t + \mathrm{d} B_t$, where $B$ denotes a one-dimensional standard Brownian motion.

\begin{thm}[First main theorem for $\bs \lla\bs \varphi\bs \rra\bs =\bs 1$]\label{thm:>0}
{\rm (1$\cc$)} Assumption~\ref{ass:X} is satisfied by $\{X_t\}\equiv \{\rho_t\}$ and $E\equiv (0,\infty)$. Hence, Theorem~\ref{thm:excursion} applies. \medskip 

\noindent {\rm (2$\cc$)}
Fix $q\in (0,\infty)$ and a bounded $\varphi$ with compact support with $\lla \varphi\rra=1$, and set
 \begin{align}
\Lambda_\vep\equiv  \frac{\mu\pi}{\log \vep^{-1}}+\frac{ \lambda\pi}{\log^2 \vep^{-1}},\quad \mu\in (0,\infty),\;\lambda\in \R.
\label{def:lvo}
\end{align}
Then for fixed $a,b,\mu,\ell\in (0,\infty)$ and $\lambda\in \R$,
the following holds as $\vep\to 0$:
\begin{align}\label{lim:aux}
\begin{split}
&\E^{\rho}_{a}[\e^{-q\vep^2\tau^b_\ell+A_{\lv \oovarphi\uvep}(\tau^b_\ell)}]\\
&\quad =[1+\little{o}(1)]
\exp\Biggl\{-\frac{\ell}{b} \cdot \frac{1-\mu-\frac{A_{\mu,\lambda,q}(b,b)}{\log \vep^{-1}}+\little{o}\big(\frac{1}{\log\vep^{-1}}\big) }{
2\log \vep^{-1}+\mathcal O(1)}\Biggr\},
\end{split}
\end{align}
where 
\begin{align}\label{def:Azb}
\begin{split}
A_{\mu,\lambda,q}(a,b)
 &\,\defeq\,(\mu^2-\mu)
\int \log^-(b/|z'|)  \overline{\varphi}(z')\d z'+\int \biggl(\log \frac{|b-z'|^{\mu^2}}{|a-z'|^\mu}\biggr)
\overline{\varphi}(z')\d z'\\
&\quad +\mu\biggl(\log \frac{2^{1/2}}{q^{1/2}} - \gamma_{\sf EM}\biggr)+\lambda+\mu^2\pi \lla \mathcal E(\varphi)\rra,
\end{split}
 \end{align}
using $\lla \cdot\rra$ and $\mathcal E(f)$ defined in \eqref{def:phig} and \eqref{def:energy}, and the Euler--Mascheroni constant $\EM$.
\end{thm}

\begin{proof}[Proof of Theorem~\ref{thm:>0} (1$\cc$)]
Let $\{X_t\}\equiv \{\rho_t\}$ with state space $E = (0, \infty)$. This choice of state space, as opposed to $E \equiv \R_+$, is justified because the origin is polar for $\BES^2$. To meet conditions (c) and (d) of Assumption~\ref{ass:X}, set $\mathbf{m}(\mathrm{d}b) \equiv \mathrm{d}b$, $U_{\nu}(a, b) \equiv V_{\nu}(a, b)$ as in \eqref{UG:polar}, and define the local times $\{L^b_t\}$ according to Tanaka's formula as previously mentioned.
\end{proof}

Section~\ref{sec:3asymp} presents a more detailed version of Theorem~\ref{thm:>0} (2$\cc$) as Proposition~\ref{prop:5}, which also contains the proof. For a discussion on the connection between the proof of Proposition~\ref{prop:5} and the asymptotic excursion method, see Remark~\ref{rmk:AR}.

The asymptotic form in \eqref{lim:aux} highlights the special significance of $\mu=1$, identified in \eqref{KK:expmom} as the critical case for exponential moments of the Kallianpur--Robbins law \eqref{def:KK}. Notably, there is a phase transition in the exponential term on the right-hand side: the exponent scales as $\log^{-1}\vep^{-1}$ for $\mu \neq 1$, but as $\log^{-2}\vep^{-1}$ when $\mu=1$. In this critical case, the next-order term $-A_{\mu,\lambda}(b,b)/\log \vep^{-1}$ becomes dominant, and $-A_{\mu,\lambda}(b,b)$ encapsulates the dependence on both parameters $\mu$ and $\lambda$ of $\lv$. Recall as well formula \eqref{lim:beta} for $\beta$. Furthermore, \eqref{lim:aux} provides asymptotic representations that extend throughout the entire supercritical regime ($\mu>1$).

Theorem~\ref{thm:critres}, as an application of Theorem~\ref{thm:>0}, demonstrates that the established critical limits for $\lla \varphi\rra=1$, stated in \eqref{finalgoal0}, can be derived from the radially symmetric function $\oovarphi\uvep$ introduced before Section~\ref{sec:DGF}. This theorem introduces a new approximation for the two-dimensional Laplacian with a singular perturbation at the origin, as discussed in Section~\ref{sec:intro}. The proof is given in Section~\ref{sec:radial}.

\begin{thm}[Second main theorem for $\bs\lla\bs \varphi\bs \rra\bs =\bs 1$]\label{thm:critres}
Assume $\varphi\in \C_c(\R^2)$ with $\varphi\geq 0$ and $\lla \varphi\rra=1/2$.
Let $\oovarphi\uvep$ be defined by \eqref{def:phiexp} and $(\oovarphi\uvep)_\vep$ be defined as in \eqref{def:ffvep}, and take $\mu=1$ in \eqref{def:lvo}. For any $0\leq g\in \B_b(\R^2)$ and $z\neq 0$, 
\begin{align}
\lim_{\vep\to 0} G_{q}^{\lv(\oovarphi\uvep)_\vep}\{g\} ( z)=G_{q}\{g\}(z)+\frac{2\pi}{\log (q/\beta)}G_{q}(z)G_{q}\{g\}(0),\label{finalgoal0}
\end{align}
where $q$ satisfies $q>\beta$ and is large enough, and $\beta$ is defined by \eqref{lim:beta}.
\end{thm}

\begin{rmk}\label{rmk:betadiff}
The prefactor $2\pi/[\log (q/\beta)]$ in \eqref{finalgoal0}, defined by \eqref{lim:beta}, matches the expression $2\pi/[\log (q/\beta)]$  found in \cite[(5.12)]{C:DBG}, though the parameter $\beta$ in the latter case satisfies the following formula: 
\begin{align}\label{lim:beta1}
\frac{\log \beta}{2}&=-\dint \phi(z')\phi(z'')\log |z'-z''|\d z'\d z'+\log 2 +\lambda- \gamma_{\sf EM}.
\end{align}
To address these differences, consider $\varphi(z) \equiv 2\phi(\sqrt{2}z)$ with $\int \phi = 1$ in the context of \eqref{finalgoal0}. Then, the expression for $\beta$ in \eqref{lim:beta1} follows from the formula for $\beta$ in \eqref{lim:beta}.
\qed 
\end{rmk}

Let us point out that while the asymptotic recursion approach outlined in Section~\ref{sec:KKregime} extends to the present case $\lla \varphi\rra=1$, this approach and the one based on Theorem~\ref{thm:excursion} have distinct strengths. Specifically, in Section~\ref{sec:another}, we demonstrate that the asymptotic recursion method enables a shorter derivation of \eqref{finalgoal00}. This technique could potentially provide an alternative proof of Theorem~\ref{thm:critres} as well. However, such a proof would be more limited than the approach based on Theorem~\ref{thm:excursion}, as it would only apply to the subcritical and critical cases.

\section{Asymptotic recursions}\label{sec:=0}
\subsection{First-order expansions}\label{sec:expansion}
In this subsection, we prove Lemmas~\ref{lem:expansion=0} and~\ref{lem:1replace00} on  expansions of $F_\infty$. To facilitate the proof of Proposition~\ref{prop:gbdd} (2$\cc$), to be presented in Section~\ref{sec:geo}, we also generalize part of these lemmas to $\{F_n\}_{n\in \mathbb{Z}}$ and $\{D_n\}_{n\in \mathbb{Z}\cup \{\infty\}}$. Here, the new term $D_\infty$ refers to any $\widetilde{F}$ satisfying $\|\widetilde{F}\|_\varphi<\infty$ and, for some $q\in (0,\infty)$,
 $\widetilde{F}(z)=\mc T_\vep\{\widetilde{F}\}(z)$ for all $z\in \supp(\varphi)$ and all sufficiently small $\vep\in (0,\ol{\vep}(\chi)]$. Additionally, recall
\begin{align}
\begin{split}
F_n&\,\defeq\,
\begin{cases}
0,&\Bbb Z\setminus \Bbb Z_+,\\
G_q\{g\}(\vep z)+\mc T_\vep\{F_{n-1}\}(z) ,&n\in \Bbb Z_+,\\
G^{\llv^{1/2}\varphi_\vep}_q\{g\}(\vep z),&n=\infty,
\end{cases}\\
D_n&\,\defeq\, F_n-F_{n-1},\quad  n \in \mathbb{Z},
\end{split}
\label{def:Fn-new}
\end{align}
which restates \eqref{def:F}, \eqref{def:Fc}, and the definitions of $D_n$ in Proposition~\ref{prop:gbdd}. Note that these definitions use the following notations: as in  \eqref{def:Af}, \eqref{def:Gf}, \eqref{def:Gq0}, \eqref{def:G}, and \eqref{def:Tvep},
\begin{align*}
A_f(t)&\,\defeq\,\int_0^t f(W_s)\d s,\quad 
G^f_\nu\{g\}(z)\,\defeq \int_0^\infty \e^{-\nu t}\E^W_z[\e^{A_f(t)}g(W_t)]\d t,\quad G_\nu\{g\}\;\defeq\; G_\nu^0\{g\},\\
G_\nu(z,z')&\,\defeq \int_0^\infty \e^{-\nu t}P_t(z,z')\d t,\quad 
\mc T_\vep\{h\}(z)\,\defeq\,\lv^{1/2}\int G_q( \vep z,\vep z')\varphi(z')h( z')\d z',
\end{align*}
and $P_t(z,z')$, specified in \eqref{def:Pt},
denote the transition probability densities of the two-dimensional standard Brownian motion.\medskip  

\begin{proof}[Proof of (\ref{exp:10}) of Lemma~\ref{lem:expansion=0}] By the fundamental theorem of calculus,
\begin{align}
&\E^W_{\vep z}[\e^{A_{\lv^{1/2} \varphi_\vep}(t)}g(W_{t})]\notag\\
&\quad =\E^W_{\vep z}\left[\left(1+\int_0^t \e^{A_{\lv^{1/2} \varphi_\vep}(t)-A_{\lv^{1/2} \varphi_\vep}(s)}\d_s A_{\lv^{1/2} \varphi_\vep}(s)\right)g(W_t)\right]
\label{exp:12proof0}\\
&\quad =\E^W_{\vep z}[g(W_t)]+\lv^{1/2}\int_0^t\int_{\Bbb R^2} P_s(\vep z,\vep z')\varphi(z')\E^W_{\vep z'}[\e^{A_{\lv^{1/2} \varphi_\vep}(t-s)}g(W_{t-s})]\d z'\d s,\label{exp:12proof}
\end{align}
where \eqref{exp:12proof} is obtained by conditioning on $\sigma(W_r;r\leq s)$ and then removing $\vep$ in $\varphi_\vep( z')\,\defeq\,\vep^{-2}\varphi(\vep^{-1}z')$ using a change of variables. 

We now obtain the required expansion $F_\infty(z)=G_q\{g\}(\vep z)+ 
\mc T_\vep\{F_{\infty}\}(z)$ in \eqref{exp:10} by taking the Laplace transforms of the left-hand side of
 \eqref{exp:12proof0} and the right-hand side of
 \eqref{exp:12proof}. In more detail, the first term on the right-hand side of \eqref{exp:12proof} transforms to $G_q\{g\}(\vep z)$, and the last term in \eqref{exp:12proof} 
transforms to $\mc T_\vep\{F_{\infty}\}(z)$. 
\end{proof}

Next, we aim to establish \eqref{Finfty:expansion} and its generalizations for $\{F_n\}_{n\in \Bbb Z}$ and $\{D_n\}_{n\in \Bbb Z\cup \{\infty\}}$. The main tool is the following asymptotic expansion: as $\nu|z-z'|^2\searrow 0$,
\begin{align}\label{asymp:G}
\begin{split}
G_{\nu}(z,z')=\frac{1}{\pi}\biggl(\log\frac{2^{1/2}}{\nu^{1/2}|z-z'|} -\gamma_{\sf EM}\biggr)+ \mathcal O\biggl(\nu|z-z'|^2\log \frac{1}{\nu^{1/2} |z-z'|}\biggr),
\end{split}
\end{align}
where $\gamma_{\sf EM}$ is the Euler--Mascheroni constant. Note that \eqref{asymp:G} holds because, by comparing \eqref{def:G} with an integral representation of the Macdonald function $K_0$  \cite[(5.10.25) on p.119]{Lebedev}, we have the following formula:
\begin{align}\label{id:GK}
G_\nu(z,z')=\pi^{-1}K_0( \sqrt{2\nu}|z-z'|),
\end{align}
whereas a series expansion of $K_0$ \cite[(5.7.11) on p.110]{Lebedev} gives
\begin{align}\label{asymp:K0}
K_0(a)=\log (2/a)-\gamma_{\mathsf E\mathsf M}+\mathcal O(a^2\log a^{-1})\quad \mbox{as }a\searrow 0.
\end{align}

In the sequel, we use the notations defined at the end of Section~\ref{sec:intro}, as well as
the following shorthand notation:
\begin{align}\label{def:IX}
\begin{split}
I^F_{n}(z)&\,\defeq\, \begin{cases}
G_q\{g\}(\vep z),&n\in \Bbb Z_+\cup\{\infty\},\\
0,&\mbox{otherwise},
\end{cases}\\
 I^D_{n}(z)&\,\defeq\, I^F_{n}(z)-I^F_{n-1}(z)=\begin{cases}
G_q\{g\}(\vep z),&n=0,\\
0,&\mbox{otherwise,}
\end{cases}
\end{split}
\end{align}
where $n\in \Bbb Z\cup\{\infty\}$ and $\infty-1$ is understood as $\infty$.

\begin{lem}\label{lem:expX}
Let $X \in \{F, D\}$ and $n \in \mathbb{Z} \cup \{\infty\}$. Fix $q \in (0, \infty)$. If $n = \infty$, require that $q$ satisfies Assumption~\ref{ass:q} when $X = F$, or that $D_\infty$ exists for all sufficiently small $\vep\in (0,\ol{\vep}(\chi)]$ when $X = D$. Then for all $\vep\in (0,\ol{\vep}(\chi)]$, sufficiently small when $n=\infty$, and all $z\in \supp(\varphi)$,
\begin{align}
\begin{split}
X_{n+1}(z)&=I^X_{n+1}( z)+\frac{\llv^{1/2}}{\pi} \biggl(\log\frac{2^{1/2}}{q^{1/2}\vep}-\gamma_{\sf EM}\biggr)  \lla \varphi X_n\rra +\llv^{1/2}\mc K\{X_n\}(z) \\
&\quad+\llv^{1/2}\int\mathcal O\biggl(q\vep^2 |z-z'|^2\log \frac{1}{q^{1/2}\vep |z-z'|}\biggr)\varphi(z')X_{n}(z')\d z',\label{1replace:eq}
\end{split}
\end{align}
where $\lla f\rra\,\defeq\int f(z)\d z$ and $\mc K\{f\}(z)\,\defeq\int (\pi^{-1}\log|z-z'|^{-1})\varphi(z')f(z')\d z'$, as defined in \eqref{def:phig} and \eqref{def:K}. Moreover, we can choose $C>0$ independent of $n\in \Bbb Z\cup \{\infty\}$ such that $
C|q\vep^2 |z-z'|^2\log [q^{1/2}\vep |z-z'|]^{-1}|$
bounds the $\mathcal O$-term in \eqref{1replace:eq}, for any $X\in \{F,D\}$. 
 \end{lem}
\begin{proof}
By \eqref{def:Fn-new}, \eqref{1replace:eq} holds trivially whenever $n \notin \mathbb{Z}_+ \cup \{\infty\}$. For $X = F$ and $n \in \mathbb{Z}_+ \cup \{\infty\}$, the last three terms on the right-hand side of \eqref{1replace:eq} are obtained by expanding $G_q(\vep z, \vep z')$ as in \eqref{asymp:G}, and then applying this expansion to $F_n(z) = G_q\{g\}(\vep z) + \mathcal{T}_\vep\{F_{n-1}\}(z)$. (Recall that this equation for $F_n(z)$ holds for finite $n$ by the definition of $F_n$ and for $n = \infty$ by the expansion \eqref{exp:10} proved above.) The proof of \eqref{1replace:eq} for $X = D$ and $n \in \mathbb{Z}_+ \cup \{\infty\}$ follows similarly upon using linearity. Finally, the last assertion of the lemma holds simply because all the $\mathcal O$-terms arise from the asymptotic expansion of $G_q(\vep z,\vep z')$ for $z,z'\in \supp(\varphi)$.
\end{proof}

\begin{lem}\label{lem:Gexp}
For all $h\in \B_b(\Bbb R^2)$ and $q\in (0,\infty)$, it holds that 
\begin{align}\label{mod:G}
|G_{q}\{h\}(\vep z')-G_q\{h\}(\vep z)|
 \leq 2 \vep^{2/3}|z'-z|\|h\|_\infty+C\vep^{2/3}|z'-z|^{1/2}G_{q/2}\{|h|\}(\vep z)
\end{align}
whenever $z,z'\in \Bbb R^2$ and $\vep\in (0,1)$ satisfy $\vep^{2/3}|z-z'|^{1/2}\leq 1$.
In particular, 
\begin{align}
G_q\{h\}(\vep z)=G_q\{h\}(0)+\mathcal O_{\varphi}[(1+q^{-1})\vep^{2/3}\|h\|_\infty ]\quad\mbox{$\forall\;z \in \supp(\varphi)$}.\label{Texp1}
\end{align}
\end{lem}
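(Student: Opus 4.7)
The plan is to prove the general estimate \eqref{mod:G} via the heat-kernel representation $G_q(y) = \int_0^\infty \expp^{-qt}P_t(y)\d t$, split at a threshold $\delta$. The threshold $\delta = \vep^{2/3}|z-z'|$ is dictated by balancing the two terms on the right-hand side: the short-time contribution scales linearly in $\delta$ against $\|h\|_\infty$, whereas the long-time contribution scales as $\vep|z-z'|/\sqrt{\delta}$ against $G_{q/2}\{|h|\}(\vep z)$, and the identity $\vep|z-z'|/\sqrt{\delta}=\vep^{2/3}|z-z'|^{1/2}$ pins down the exponent $2/3$.

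For the short-time piece $\int_0^\delta \expp^{-qt}(P_t\{h\}(\vep z')-P_t\{h\}(\vep z))\d t$, I would first control each term individually using $|P_t\{h\}(y)|\leq \|h\|_\infty$ (from $\int P_t(w)\d w = 1$) and the trivial bound $\expp^{-qt}\leq 1$, yielding $\leq 2\delta\|h\|_\infty = 2\vep^{2/3}|z-z'|\|h\|_\infty$ after the triangle inequality. For the long-time piece, I would invoke the Gaussian gradient estimate $|\nabla P_t(v)|\leq C\,t^{-1/2}P_{\alpha t}(v)$ (valid for any $\alpha > 1$ with constant depending on $\alpha$, from the elementary inequality $ue^{-u^2/2}\leq c_\alpha e^{-u^2/(2\alpha)}$), apply the mean value theorem along the segment $v_s = \vep z + s\vep(z'-z)$ for $s\in[0,1]$, and then use the Gaussian comparison
\[
|v_s - w|^2 \geq (1-\theta)|\vep z - w|^2 - (\theta^{-1}-1)|\vep(z-z')|^2,\quad \theta\in(0,1),
\]
(obtained from expanding $|v_s - w|^2$ and AM-GM) to deduce $P_{\alpha t}(v_s - w)\leq C' P_{\alpha' t}(\vep z - w)$ with $\alpha' = \alpha/(1-\theta)$. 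The exponential factor from this comparison is uniformly bounded because the standing hypothesis $\vep^{2/3}|z-z'|^{1/2}\leq 1$ forces $|\vep(z-z')|^2/\delta\leq 1$. Tuning $\alpha$ and $\theta$ so that $\alpha' = 2$ and changing variables $u=2t$ in $\int_\delta^\infty \expp^{-qt}t^{-1/2}P_{2t}(\vep z-w)\d t$ extracts a factor $\delta^{-1/2}$ and leaves $G_{q/2}(\vep z - w)$; integrating against $|h(w)|$ and multiplying by $\vep|z-z'|$ produces the second term $C\vep^{2/3}|z-z'|^{1/2}G_{q/2}\{|h|\}(\vep z)$.

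The corollary \eqref{Texp1} follows by specialising \eqref{mod:G} with $z' = 0$: compactness of $\supp(\varphi)$ yields $|z|\leq C_\varphi$, hence $\vep^{2/3}|z|^{1/2}\leq 1$ for small $\vep$, while the identity $\int G_\nu(w)\d w = 1/\nu$ (from Fubini and $\int P_t = 1$) gives $G_{q/2}\{|h|\}(0)\leq 2\|h\|_\infty/q$, so the two terms combine to $\mc O_\varphi[(1+q^{-1})\vep^{2/3}\|h\|_\infty]$. The main obstacle will be the Gaussian comparison step in the long-time piece: the exponents $\alpha,\theta$ must be tuned so that the resulting kernel is exactly $G_{q/2}$ rather than $G_{q/\alpha''}$ for some larger $\alpha''$, and this is precisely where the tight hypothesis $\vep^{2/3}|z-z'|^{1/2}\leq 1$ is used to absorb the exponential error factor arising in the comparison.
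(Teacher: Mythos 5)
Your proof follows essentially the same route as the paper's: split the Laplace-transform time integral at $\delta_0=\vep^{2/3}|z'-z|$, bound the short-time part trivially by $2\delta_0\|h\|_\infty$, and control the long-time part by a Gaussian comparison of the type $|P_t(\vep(z'-z)+z_1)-P_t(z_1)|\leq C\,(\vep|z'-z|/\delta_0^{1/2})\,P_{2t}(z_1)$ for $t\geq \delta_0$, which the paper imports from \cite[Lemma~4.16]{C:DBG} and you re-derive by hand via the gradient bound, the mean value theorem and the AM--GM Gaussian comparison, with the hypothesis $\vep^{2/3}|z-z'|^{1/2}\leq 1$ absorbing the exponential error exactly as in the cited estimate. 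The only (harmless) deviation is that your second term carries an unspecified constant rather than the exact constants $2$ and $1$ displayed in \eqref{mod:G}; since the lemma is only used through $\mathcal O$-bounds such as \eqref{Texp1}, this does not affect any application.
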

\begin{proof}
We use the following bound from \cite[Lemma~4.16 (2$\cc$)]{C:DBG}:
for all $\vep_0,\delta_0\in (0,\infty)$ such that $\vep_0/\delta_0^{1/2}\leq 1$ 
and for all $M_0\in (0,\infty)$,
\begin{align}\label{gauss:vep}
&\sup_{|z_0|\leq M_0}|P_t(\vep_0 z_0+z_1)-P_t(z_1)|\leq C(M_0)(\vep_0/\delta_0^{1/2})P_{2t}(z_1),\; \forall\;z_1\in \Bbb R^2,\;t\geq \delta_0.
\end{align}
To see why this bound implies \eqref{mod:G}, write
\[
P_t(\vep z'-z'')-P_{t}(\vep z-z'')=P_t\left(\vep |z'-z|\cdot \frac{(z'-z)}{|z'-z|}  +\vep z-z''\right)-P_t(\vep z-z'')
\]
for all $z'\neq z$. Then, by taking $M_0=1$, $\vep_0=\vep |z'-z|$, $\delta_0=\vep^{2/3}|z'-z|$, $z_0=(z'-z)/|z'-z|$ and $z_1=\vep z-z''$, \eqref{gauss:vep} implies the following inequality whenever the sufficient condition $\vep_0/\delta_0^{1/2}=\vep^{2/3}|z'-z|^{1/2}\leq 1$ holds:
\begin{align*}
&|G_{q}\{h\}(\vep z')-G_q\{h\}(\vep z)|\\
&\quad \leq \int_0^{\infty}\e^{-qt}\left(|P_th(\vep z')-P_th(\vep z)|\1_{t< \delta_0}+C(M_0)(\vep_0/\delta_0^{1/2}) P_{2t}|h|(\vep z)\1_{t\geq \delta_0}\right)\d t.
\end{align*}
The required inequality \eqref{mod:G} now follows immediately.
\end{proof}

\begin{proof}[Proof of (\ref{Finfty:expansion}) of Lemma~\ref{lem:expansion=0}]
Combine \eqref{1replace:eq} and \eqref{Texp1}, with \eqref{1replace:eq} restricted to $(X,n)=(F,\infty)$. In this case, the last term in \eqref{1replace:eq} is $\mathcal O_{\varphi,\chi}\{q\vep^{2}[\log (q^{1/2}\vep)^{-1}]\}\|F_{\infty}\|_{\varphi}$.
\end{proof}

Finally, we turn to Lemma~\ref{lem:1replace00} and its generalizations to  $\{F_n\}_{n\in \Bbb Z}$ and $\{D_n\}_{n\in \Bbb Z\cup \{\infty\}}$. 

\begin{lem}\label{lem:1replace0}
Let $X \in \{F, D\}$ and $n \in \mathbb{Z} \cup \{\infty\}$. Fix $q \in (0, \infty)$. If $n = \infty$, require that $q$ satisfies Assumption~\ref{ass:q} when $X = F$, or that $D_\infty$ exists for all sufficiently small $\vep\in (0,\ol{\vep}(\chi)]$ when $X = D$. Then for all $\vep\in (0,\ol{\vep}(\chi)]$, sufficiently small when $n=\infty$, and all $z'',z'\in \supp(\varphi)$,
\begin{align}
\lla \varphi X_{n+1}\rra&=\llv^{1/2}\lla \mathcal E(\varphi)X_n\rra +\O^X_{n},\label{1replace}\\
X_{n+1}( z'')-X_{n+1}( z')&=\llv^{1/2}\mc K\{ X_n\}(z'')-\llv^{1/2} \mc K\{X_n\}(z')+\O^X_n,\label{F1replace}
\end{align}
where $\mathcal E\{f\}(z')\,\defeq\,\varphi(z')\int (\pi^{-1}\log|z'-z''|^{-1})\varphi(z'')\d z''$, as defined in \eqref{def:energy}, and for all $n\in\Bbb Z\cup \{\infty\}$,
\begin{align}\label{def:Om}
\begin{split}
\O^F_n&\;\defeq\, \mathcal O_{\varphi,\chi}[(1+q^{-1})\vep^{2/3}\|g\|_\infty]+
\mathcal O_{\varphi,\chi}\{q\vep^2[\log (q^{1/2}\vep)^{-1}] \}\|F_{n}\|_\varphi,\\
\O^D_n&\;\defeq\, \1_{n+1=0}\mathcal O_{\varphi,\chi}[(1+q^{-1})\vep^{2/3}\|g\|_\infty]+
\mathcal O_{\varphi,\chi}\{q\vep^2[\log (q^{1/2}\vep)^{-1}] \}\|D_{n}\|_\varphi.
\end{split}
\end{align}
\end{lem}

\begin{proof}
To get \eqref{1replace}, note that integrating both sides of \eqref{1replace:eq} against $\varphi(z)\d z$ gives
\begin{align*}
\begin{split}
\lla \varphi F_{n+1}\rra&=\mathcal O_\varphi[(1+q^{-1})\vep^{2/3}\|g\|_\infty]+\llv^{1/2}\lla \mathcal E(\varphi)F_n\rra 
+ \mc O_{\varphi,\chi}\{q\vep^2[\log (q^{1/2}\vep)^{-1}]\}\|F_{n}\|_\varphi
,\\
\lla \varphi D_{n+1}\rra&=\1_{n+1=0}\mathcal O_\varphi[(1+q^{-1})\vep^{2/3}\|g\|_\infty]+\llv^{1/2}\lla \mathcal E(\varphi)D_n\rra 
\\
&\quad+
 \mc O_{\varphi,\chi}\{q\vep^2[\log (q^{1/2}\vep)^{-1}]\}\|D_{n}\|_\varphi.
\end{split}
\end{align*}
In more detail, the first terms on the right-hand sides follow since $G_q\{g\}(\vep z)$ integrates to $\mathcal O_{\varphi}[(1+q^{-1})\vep^{2/3}\|g\|_\infty]$ by \eqref{Texp1} and the assumption of $\lla \varphi\rra=0$. The use of $\1_{n+1=0}$ considers $I^D_n=0$ for $n\neq 0$ by \eqref{def:IX}. The third term on the right-hand side of \eqref{1replace:eq} integrates to $\lv^{1/2}\lla \mathcal E(\varphi)X_n\rra$ by the definition $\mc E(\varphi)(z')$. The sum of the other terms on the right-hand side of \eqref{1replace:eq} integrates to $
 \mc O_{\varphi,\chi}\{q\vep^2[\log (q^{1/2}\vep)^{-1}]\}\|X_{n}\|_\varphi$. In particular, the constant term vanishes after the integration, using the property $\lla \varphi\rra=0$.  

The proof of \eqref{F1replace} is almost identical. We now use \eqref{1replace:eq} to get an expansion of $X_{n+1}( z'')-X_{n+1}( z')$. Then the change we need is that by \eqref{mod:G}, $|G_q\{g\}(\vep z'')-G_q\{g\}(\vep z')|=\mathcal O_{\varphi}[(1+q^{-1})\vep^{2/3}\|g\|_\infty]$ for all $z'',z'\in \supp(\varphi)$. 
\end{proof}

\begin{proof}[Proof of Lemma~\ref{lem:1replace00}]
Take $X=F$ and $n=\infty$ in \eqref{1replace} and \eqref{F1replace}.
\end{proof}

\subsection{Higher-order expansions}\label{sec:multi}
We now proceed to the proof of Proposition~\ref{prop:Fvep=0}, again along with its generalizations to $\{F_n\}_{n\in \Bbb Z}$ and $\{D_n\}_{n\in \Bbb Z\cup\{\infty\}}$. Recall that $\lv$ satisfies \eqref{def:llv} throughout Section~\ref{sec:=0}. Also, the operator $\mc K\{f\}(z)\,\defeq\,\int (\pi^{-1}\log|z-z'|^{-1})\varphi(z')f(z')\d z'$ and the functions $I^F_n,I^D_n$ have been defined in \eqref{def:K} and \eqref{def:IX}, respectively. In the sequel, it is understood that $\mathcal K^2\{\1\}=\mathcal K^2\{\1\}(z)$ and $\mathcal K\{\1\}^2=\mathcal K\{\1\}(z)^2$.

\begin{lem}\label{lem:signexpansion}
Let $X \in \{F, D\}$ and $n \in \mathbb{Z} \cup \{\infty\}$. Fix $q \in (0, \infty)$. If $n = \infty$, require that $q$ satisfies Assumption~\ref{ass:q} when $X = F$, or that $D_\infty$ exists for all sufficiently small $\vep\in (0,\ol{\vep}(\chi)]$ when $X = D$.\medskip 

\noindent {\rm (1$\cc$)}  For all $\vep\in (0,\ol{\vep}(\chi)]$, sufficiently small when $n=\infty$, and all $z\in \supp(\varphi)$, we have
\begin{align}
\begin{split}
X_{n+1}( z)&=I^X_{n+1}(0)+A^o_1(z)X_n( z)+ A^o_{2}(z)X_{n-1}(z) + A_{3}^o(z)X_{n-2}(z)\\
&\quad+
A_{4}^o(z)X_{n-3}(z) 
 +\mc O^{X\prime}_{n,n-4},\label{1replace:eq1goal}
 \end{split}
\end{align} 
where $A^o_j=A^o_j(z)$ are defined as follows: 
\begin{align}
A^o_{1}&\,\defeq\,
\llv^{1/2} \mc K\{\1\},\label{def:A1o}\\ 
\begin{split}\label{def:A2o}
A^o_{2}&\,\defeq\,\frac{\llv \lla \mathcal E(\varphi)\rra\log \vep^{-1}}{\pi} \\
&\quad\;+ \lv\biggl\{\frac{1}{\pi} \biggl(\log \frac{2^{1/2}}{q^{1/2} }-\EM\biggr) \lla\mathcal E(\varphi)\rra  + [\mc K^2\{\1\}-\mc K\{\1\}^2]\biggr\},
\end{split}\\
A^o_{3}&\,\defeq\, \frac{\llv^{3/2} \lla \mc E(\varphi)\rra\log \vep^{-1}}{\pi}\left(- \mc K\{\1\}+\frac{\lla \mc E(\varphi)\mc K\{\1\}\rra}{\lla \mc E(\varphi)\rra}\right),\label{def:A3o}\\
\begin{split}
A^o_{4}&\,\defeq\,\frac{\llv^{2}\lla \mc E(\varphi)\rra \log \vep^{-1}}{\pi}\\
&\quad \times \biggl(
\frac{\lla \mc E(\varphi)\mc K^2\{\1\}\rra -\lla \mc E(\varphi)\mc K\{\1\}\rra \mc K\{\1\}}{\lla \mc E(\varphi)\rra}-[ \mc K^2\{\1\}-\mc K\{\1\}^2]\biggr),\label{def:A4o}
\end{split}
\end{align}
and with the $\O$-terms $\O^X_m$ defined by \eqref{def:Om},
\begin{align}
\begin{split}
\mathcal O^{X\prime}_m&\,\defeq\, \frac{\llv^{1/2}}{\pi}\biggl|\log \frac{2^{1/2}}{q^{1/2}\vep}-\EM\biggr|\O^X_m+\O^X_m+
\frac{\llv}{\pi}\biggl|\log \frac{2^{1/2}}{q^{1/2}}-\EM\biggr|
\O_m^X\\
&\quad\; +\frac{\llv^{3/2}}{\pi}\biggl|\log \frac{2^{1/2}}{q^{1/2}}-\EM\biggr|\O_{\varphi,\chi}(1)\|X_m\|_\varphi\\
&\quad\;+(\log \vep^{-1})^{-3/2}\O_{\varphi,\chi}(1)\|X_m\|_\varphi,\quad m\in \Bbb Z\cup \{\infty\},\label{def:Om'}
\end{split}\\
\mc O^{X\prime}_{m_2,m_1}& \,\defeq 
\begin{cases}
\mathcal O^{X\prime}_{m_2}+\cdots+\mathcal O^{X\prime}_{m_1},& m_2\geq m_1,\; m_1,m_2\in \Bbb Z,\label{def:O'n1n2}\\
\O^{X\prime}_\infty,&m_2=\infty, \;m_1=\infty-m,\;m\in \Bbb N.
\end{cases}
\end{align}

\noindent {\rm (2$\cc$)} Set $\mu = 1$ in the definition \eqref{def:llv} of $\llv$. For all $\varepsilon \in (0, \ol{\vep}(\chi)]$, sufficiently small when $n = \infty$, and for all $z \in \supp(\varphi)$, we have
\begin{align}
\begin{split}\label{rec:F}
X_{n+1}(z)&=I^X_{n+1}(0)+A_1(z)X_{n}(z)+A_2(z)X_{n-1}(z)
 +A_3(z)X_{n-2}(z) \\
 &\quad+A_4(z)X_{n-3}(z)+\mc O^{X\prime}_{n,n-4},
\end{split}
\end{align}
where $A_j=A_j(z)$ are defined as follows: 
\begin{align}
A_1&\;\defeq\,\frac{\frac{\pi^{1/2}\mc K\{\1\}}{\slla \mc E(\varphi)\srra^{1/2}}}{\log^{1/2} \vep^{-1}}+\frac{\frac{\lambda'\pi^{1/2}\mc K\{\1\}}{2\slla \mc E(\varphi)\srra ^{1/2}}}{\log \vep^{-1}},\label{def:a}\\
\begin{split}
A_2&\;\defeq\,1+\frac{\lambda'}{\log^{1/2} \vep^{-1}} +\frac{\lambda +\log \frac{2^{1/2}}{q^{1/2}}-\EM+\frac{\pi [\mc K^2\{\1\}-\mc K\{\1\}^2]}{\slla \mc E(\varphi)\srra }}{\log \vep^{-1}},\label{def:b}
\end{split}\\
\begin{split}
A_3&\;\defeq\,\frac{\frac{\pi^{1/2}}{\slla \mc E(\varphi)\srra^{1/2}}\Bigl(-\mc K\{\1\}+\frac{\slla \mc E(\varphi)\mc K\{\1\}\srra}{\slla \mc E(\varphi)\srra }\Bigr)}{\log^{1/2} \vep^{-1}} +\frac{ \frac{3\lambda'\pi^{1/2}}{2\slla \mc E(\varphi)\srra^{1/2}}\Bigl(-\mc K\{\1\}+\frac{\slla \mc E(\varphi)\mc K\{\1\}\srra}{\slla \mc E(\varphi)\srra }\Bigr) 
}{\log \vep^{-1}},\label{def:c}
\end{split}\\
A_4&\;\defeq\,\frac{\frac{\pi\slla \mc E(\varphi)\mc K^2\{\1\}\srra-\pi\slla \mc E(\varphi)\mc K\{\1\}\srra \mc K\{\1\}}{\slla \mc E(\varphi)\srra^2}
-\frac{\pi[\mc K^2\{\1\}-\mc K\{\1\}^2]}{\slla \mc E(\varphi\srra}}{ \log \vep^{-1}}.\label{def:d}
\end{align}
\end{lem}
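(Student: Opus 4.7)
My approach is to iterate the first-order identities of Lemmas~\ref{lem:expX} and~\ref{lem:1replace0} four times, matching terms at each order in $\lv^{1/2}$ and $\log\vep^{-1}$ to obtain the coefficients $A^o_1,\ldots,A^o_4$ in \eqref{def:A1o}--\eqref{def:A4o}. The plan for part (1$\cc$) starts from \eqref{1replace:eq}: after splitting $\frac{1}{\pi}(\log\frac{2^{1/2}}{q^{1/2}\vep}-\EM)=\frac{\log\vep^{-1}}{\pi}+\tilde L$ with $\tilde L$ independent of $\vep$, the identity reads $X_{n+1}(z)=I^X_{n+1}(z)+\lv^{1/2}L_\vep \lla\varphi X_n\rra+\lv^{1/2}\mc K\{X_n\}(z)+\O^X_n$, with $L_\vep\defeq\frac{\log\vep^{-1}}{\pi}+\tilde L$. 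The term $I^X_{n+1}(z)$ is replaced by $I^X_{n+1}(0)$ via \eqref{Texp1} (the $D$-case is automatic, since $I^D_m\equiv 0$ unless $m=0$, which is exactly when $\1_{n+1=0}$ in $\O^D_n$ is active). It remains to reduce $\lla\varphi X_n\rra$ and $\mc K\{X_n\}(z)$ to linear combinations of point evaluations $X_{n-k}(z)$ for $k=0,1,2,3$, modulo $\mc O^{X\prime}_{n,n-4}$.

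The reduction alternates two elementary moves: \textbf{(i)} \emph{aggregation}, which uses \eqref{1replace} to convert $\lla \varphi X_m\rra$ into $\lv^{1/2}\lla \mc E(\varphi)X_{m-1}\rra$; and \textbf{(ii)} \emph{localization}, which rewrites an integral $\int F(w)X_{m}(w)\d w$ as $X_m(z)\int F(w)\d w+\int F(w)[X_m(w)-X_m(z)]\d w$ and then applies \eqref{F1replace} to the difference. Localizing $\mc K\{X_n\}(z)$ once produces $A^o_1(z)X_n(z)=\lv^{1/2}\mc K\{\1\}(z)X_n(z)$; aggregation of $\lla\varphi X_n\rra$ followed by localization of $\lla \mc E(\varphi)X_{n-1}\rra$ produces the main $\frac{\lv\lla\mc E(\varphi)\rra\log\vep^{-1}}{\pi}X_{n-1}(z)$ contribution to $A^o_2(z)X_{n-1}(z)$, while a second localization applied to $\mc K^2\{X_{n-1}\}(z)-\mc K\{\1\}(z)\mc K\{X_{n-1}\}(z)$ yields the residual $\lv[\mc K^2\{\1\}-\mc K\{\1\}^2]$ piece. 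The third iteration produces $\lv^{3/2}L_\vep[\lla\mc E(\varphi)\mc K\{X_{n-2}\}\rra-\lla\mc E(\varphi)\rra \mc K\{X_{n-2}\}(z)]$; localizing $X_{n-2}$ there and then dividing and multiplying by $\lla\mc E(\varphi)\rra$ gives $A^o_3(z)X_{n-2}(z)$. The fourth iteration treats the next-order $\lv^2L_\vep$ correction by localizing both $\mc K^2\{X_{n-3}\}(z)$ and $\mc K\{X_{n-3}\}(z)\mc K\{\1\}(z)$, and combining them yields $A^o_4(z)X_{n-3}(z)$ in the form \eqref{def:A4o}.

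The main obstacle will be bookkeeping of error terms: each iteration produces a contribution of type $\lv^{j/2}L_\vep\O^X_m$ or $\lv^{j/2}\O^X_m$ for some $j\in\{1,2,3\}$ and shifted index $m\in\{n,\ldots,n-3\}$, and truncation after four iterations leaves a closing remainder of size $\lv^{5/2}L_\vep\|X_m\|_\varphi=\mathcal O(\log^{-3/2}\vep^{-1})\|X_m\|_\varphi$ at $\mu=1$. All these pieces fit exactly into the five summands of $\O^{X\prime}_m$ defined in \eqref{def:Om'}, and their aggregate over $m\in\{n,\ldots,n-4\}$ is $\O^{X\prime}_{n,n-4}$ per \eqref{def:O'n1n2}. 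Uniformity over $n\in\Bbb Z$ for fixed $q$ is inherited directly from Lemmas~\ref{lem:expX} and~\ref{lem:1replace0}, since Assumption~\ref{ass:q} enters only via those base lemmas at $n=\infty$. Part (2$\cc$) is then a routine specialization: plugging $\mu=1$ into \eqref{def:llv} and Taylor-expanding $\lv^{1/2}=\frac{\pi^{1/2}}{\lla\mc E(\varphi)\rra^{1/2}\log^{1/2}\vep^{-1}}\bigl[1+\frac{\lambda'}{2\log^{1/2}\vep^{-1}}+\mathcal O(\log^{-1}\vep^{-1})\bigr]$ inside each $A^o_j$, and collecting the orders $\log^{-1/2}\vep^{-1}$ and $\log^{-1}\vep^{-1}$ to read off $A_1,\ldots,A_4$ in \eqref{def:a}--\eqref{def:d}, with all finer remainders absorbed into $\O^{X\prime}_{n,n-4}$.
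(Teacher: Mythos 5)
Your proposal is correct and follows essentially the same route as the paper's proof: iterating the first-order identities \eqref{1replace:eq}, \eqref{1replace} and \eqref{F1replace} (your ``aggregation'' and ``localization'' moves are exactly the paper's Steps 2--3, including the split $X_j=X_j(z)+[X_j-X_j(z)]$ and the double application of $\mc K$ that yields the $\mc K^2\{\1\}-\mc K\{\1\}^2$ terms), with the errors collected into $\O^{X\prime}_{n,n-4}$ and part (2$\cc$) obtained by the same power expansions of $\llv^{a}$ at $\mu=1$. No substantive difference or gap to report.
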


\begin{proof}
All the $\mathcal O$-terms below are understood to be uniform over $z\in \supp(\varphi)$. \medskip 

\noindent (1$\cc$) The main equations for proving \eqref{1replace:eq1goal} can be summarized as the expansions in the next three displays. First, we will show in Step~1 below that
\begin{align}
 X_{n+1}( z)
&=I^X_{n+1}(0)+\frac{\llv}{\pi} \biggl(\log \frac{2^{1/2}}{q^{1/2} }-\EM\biggr) \lla\mathcal E(\varphi)\rra X_{n-1}( z)\notag\\
&\quad +\frac{\llv\log \vep^{-1}}{\pi} \lla \mathcal E(\varphi)\rra X_{n-1}( z)+\frac{\llv\log \vep^{-1}}{\pi}\lla \mathcal E(\varphi) [X_{n-1}-X_{n-1}(z)]\rra \notag\\
&\quad +\llv^{1/2} \mc K\{\1\}(z) X_n( z)+\llv^{1/2} \mc K \{X_n-X_n(z)\}(z)
 +\mc O^{X\prime}_{n,n-2}.\label{1replace:eq1}
\end{align} 
In Step~2 below, the fourth term on the right-hand side of \eqref{1replace:eq1} will be shown to satisfy
\begin{align}\label{1repace:step2-final1}
\begin{split}
\frac{\llv \log \vep^{-1}}{\pi} \lla \mathcal E(\varphi) [X_{n-1}-X_{n-1}(z)]\rra 
&=A^o_{3}(z)X_{n-2}(z)+ A^o_{4}(z)X_{n-3}(z)\\
&\quad\;+\mathcal O^{X\prime}_{n-2,n-4},
\end{split}
\end{align}
where $A_3^o(z)$ and $A_4^o(z)$ are defined in \eqref{def:A3o} and \eqref{def:A4o}. 
Finally, the sixth term on the right-hand side of \eqref{1replace:eq1} satisfies, by \eqref{F1replace} with $n+1$ replaced by $n$ and $n-1$, 
\begin{align}
\llv^{1/2} \mc K \{X_n-X_n(z)\}(z)
&=\llv \mc K\{\mc K\{X_{n-1}\}-\mc K\{X_{n-1}\}(z)\}(z)+\mc O^X_{n-1}\notag\\
&=\llv [\mc K^2\{\1\}(z)-\mc K\{\1\}(z)^2]X_{n-1}(z)+\mc O^X_{n-1}\notag\\
&\quad\;+\lv^{3/2}\mc O_{\varphi,\chi}(1)\|X_{n-2}\|_\varphi+\mc O^X_{n-2}\notag\\
&=\llv [\mc K^2\{\1\}(z)-\mc K\{\1\}(z)^2]X_{n-1}(z)+\mc O^{X\prime}_{n-1,n-2}.\label{1repace:step2-final2}
\end{align}
Applying \eqref{1repace:step2-final1} and \eqref{1repace:step2-final2} to the right-hand side of \eqref{1replace:eq1} yields \eqref{1replace:eq1goal}. \medskip

\noindent {\bf Step 1.} To obtain \eqref{1replace:eq1},
first, we show that \eqref{1replace:eq} implies the following expansion: 
\begin{align}
X_{n+1}( z)&=I^X_{n+1}(0)+(\1_{X=F}\1_{n+1\geq 0}+\1_{X=D}\1_{n+1=0})\mathcal O_{\varphi}[(1+q^{-1})\vep^{2/3}\|g\|_\infty]\notag\\
&\quad+\frac{\llv}{\pi}\biggl(\log \frac{2^{1/2}}{q^{1/2}\vep}-\EM\biggr)\lla \mathcal E(\varphi)X_{n-1}\rra\notag\\
&\quad+\frac{\llv^{1/2}}{\pi}\biggl|\log \frac{2^{1/2}}{q^{1/2}\vep}-\EM\biggr|\O^X_{n-1}  
+\llv^{1/2} \mc K\{ X_n\}(z)+\O^X_n.\label{1replace:eq100}
\end{align}
To prove \eqref{1replace:eq100}, we proceed as follows. Recall \eqref{def:IX}, and expand the first term on the right-hand side of \eqref{1replace:eq} by \eqref{Texp1} in the case of nonzero $I^X_{n+1}(z)$ to justify the first and second terms on the right-hand side of \eqref{1replace:eq100}. Next, expand $\lla \varphi X_n\rra$ in the second term on the right-hand side of \eqref{1replace:eq} by  \eqref{1replace} to justify the third and fourth terms on the right-hand side of \eqref{1replace:eq100}. Also, the third term on the right-hand side of  \eqref{1replace:eq} is the same as the fifth term on the right-hand side of \eqref{1replace:eq100}. Finally, the last term of \eqref{1replace:eq} gives the last term of \eqref{1replace:eq100} by the definition \eqref{def:Om} of $\O^X_n$. 

Next, we show why \eqref{1replace:eq100} can be rewritten as the following equation:
\begin{align}
\begin{split}
X_{n+1}( z)
&=I^X_{n+1}(0)+\frac{\llv}{\pi}\biggl(\log \frac{2^{1/2}}{q^{1/2}}-\EM\biggr)\lla \mathcal E(\varphi)X_{n-1}\rra+\\
&\quad +\frac{\llv \log \vep^{-1}}{\pi}\lla \mathcal E(\varphi)X_{n-1}\rra +
\llv^{1/2} \mc K\{ X_n\}(z)+\mc O^{X\prime}_{n}+\mathcal O^{X\prime}_{n-1}.\label{1replace:eq10}
\end{split}
\end{align}
First, the sum of the second and last terms on the right-hand side of \eqref{1replace:eq100}
 is an $\O^{X\prime}_n$  by \eqref{def:Om} and \eqref{def:Om'}; this gives the
  $\O^{X\prime}_n$ in \eqref{1replace:eq10}. Second, the fourth term on the right-hand side of \eqref{1replace:eq100} gives the $\O^{X\prime}_{n-1}$ in \eqref{1replace:eq10}. Finally, by rewriting the third term on the right-hand side of \eqref{1replace:eq100} according to $(\log \frac{2^{1/2}}{q^{1/2}\vep}-\EM)=(\log \frac{2^{1/2}}{q^{1/2}}-\EM)+\log\vep^{-1}$, we obtain the second and third terms on the right-hand side of \eqref{1replace:eq10}.

Now we can obtain \eqref{1replace:eq1} from \eqref{1replace:eq10} as follows.
Expand the second term on the right-hand side of \eqref{1replace:eq10}
by applying \eqref{F1replace}, with $n+1$ replaced by $n-1$, to replace $X_{n-1}$ under $\lla\cdot\rra $ with $X_{n-1}( z)$. Hence, 
\begin{align*}
&\frac{\llv}{\pi}\biggl(\log \frac{2^{1/2}}{q^{1/2}}-\EM\biggr)\lla \mathcal E(\varphi)X_{n-1}\rra\\
&\quad =\frac{\llv}{\pi}\biggl(\log \frac{2^{1/2}}{q^{1/2}}-\EM\biggr)\lla \mathcal E(\varphi)\rra X_{n-1}(z)+\frac{\llv^{3/2}}{\pi}\biggl|\log \frac{2^{1/2}}{q^{1/2}}-\EM\biggr|\O_{\varphi,\chi}(1)\|X_{n-2}\|_\varphi\\
&\quad \quad +\frac{\llv}{\pi}\biggl|\log \frac{2^{1/2}}{q^{1/2}}-\EM\biggr|
\O_{n-2}^X\\
&\quad =\frac{\llv}{\pi}\biggl(\log \frac{2^{1/2}}{q^{1/2}}-\EM\biggr)\lla \mathcal E(\varphi)\rra X_{n-1}(z)+\mathcal O^{X\prime}_{n-2}.
\end{align*}
By this equality, the second, fifth and sixth terms on the right-hand side of \eqref{1replace:eq10} lead to the second and last terms on the right-hand side of 
\eqref{1replace:eq1}.
Also, let us just rewrite the third and fourth terms on the right-hand side of \eqref{1replace:eq10} according to $X_{j}=X_{j}( z)+[X_{j}-X_{j}( z)]$. This leads to the $m$-th terms on the right-hand side of  \eqref{1replace:eq1} for $m=3,4,5,6$. 
We have proved \eqref{1replace:eq1}. \medskip 

\noindent {\bf Step 2.} To obtain \eqref{1repace:step2-final1},  we first use \eqref{F1replace} and the definition \eqref{def:Om} of $\O^X_n$ to get
\begin{align}
\lla \mathcal E(\varphi)[X_{n-1}-X_{n-1}(z)]\rra&=\llv^{1/2} 
\lla \mc E(\varphi)[\mc K\{X_{n-2}\}-\mc K\{X_{n-2}\}(z)]\rra +\mathcal O^X_{n-2}\notag\\
\begin{split}\label{1replace:step2}
&=\llv^{1/2}\lla \mc E(\varphi) \mc K\{X_{n-2}-X_{n-2}(z)\}\rra\\
&\quad +\llv^{1/2}\lla \mathcal E(\varphi)\mc K\{\1\}\rra X_{n-2}(z) \\
&\quad  - \llv^{1/2}\lla \mathcal E(\varphi)\rra \mc K\{X_{n-2}\}(z) +\mathcal O^X_{n-2},
\end{split}
\end{align}
where the second term on the right-hand side of \eqref{1replace:step2} follows by writing $X_{n-2}=[X_{n-2}-X_{n-2}(z)]+X_{n-2}(z)$. 

Next, we expand the first and third terms on the right-hand side of \eqref{1replace:step2}. 
For the first term on the right-hand side of \eqref{1replace:step2}, we have
\begin{align}
&\llv^{1/2}\lla \mc E(\varphi) \mc K\{X_{n-2}-X_{n-2}(z)\}\rra\notag\\
&\quad =\llv\lla \mc E(\varphi) \mc K\{\mc K\{X_{n-3}\}-\mc K\{X_{n-3}\}(z)\}\rra+\mathcal O^X_{n-3}\notag\\
&\quad =\llv \lla \mc E(\varphi)\mc K^2\{X_{n-3}\}\rra-\llv\lla \mc E(\varphi)\mc K\{\1\}\rra \mc K\{X_{n-3}\}(z)+\mc O_{n-3}^X\notag\\
&\quad =\llv \lla \mc E(\varphi)\mc K^2\{\1\}\rra X_{n-3}(z) -\llv \lla \mc E(\varphi)\mc K\{\1\}\rra \mc K\{\1\}(z)X_{n-3}(z)+\mc O^X_{n-3} \notag\\
&\quad\quad  +(\log \vep^{-1})^{-3/2}\O_{\varphi,\chi}(1)\|X_{n-4}\|_\varphi+\O^X_{n-4}\notag\\
&\quad =\llv [\lla \mc E(\varphi)\mc K^2\{\1\}\rra -\lla \mc E(\varphi)\mc K\{\1\}\rra \mc K\{\1\}(z)]X_{n-3}(z)+\mc O^{X\prime}_{n-3}+\mc O^{X\prime}_{n-4},\label{1replace:step21}
\end{align}
where the first and third equalities use \eqref{F1replace} for $n+1$ replaced by $n-2$ and $n-3$, respectively.
Similarly, the third term on the right-hand side of \eqref{1replace:step2} satisfies 
\begin{align}
&-\llv^{1/2}\lla \mathcal E(\varphi)\rra \mc K\{X_{n-2}\}(z)\notag\\
&\quad =-\llv^{1/2}\lla \mathcal E(\varphi)\rra \mc K\{\1\}(z)X_{n-2}(z)\notag\\
&\quad \quad -\llv\lla \mathcal E(\varphi)\rra \mc K\{\mc K\{X_{n-3}\}-\mc K\{X_{n-3}\}(z)\}(z) +\mathcal O^{X}_{n-3}\notag\\
\begin{split}
&\quad =-\llv^{1/2} \lla \mc E(\varphi)\rra \mc K\{\1\}(z)X_{n-2}(z)\\
&\quad \quad -\llv {\lla \mc E(\varphi)\rra}  [\mc K^2\{\1\}(z)-\mc K\{\1\}(z)^2]X_{n-3}(z) +\mathcal O^{X}_{n-3}+\mathcal O^{X}_{n-4},  \label{1replace:step22}
\end{split}
\end{align}
where the first and second equalities use \eqref{F1replace} for 
$n+1$ replaced by $n-2$ and $n-3$.

Finally, applying \eqref{1replace:step21} and \eqref{1replace:step22} to \eqref{1replace:step2}
gives the following equivalent of \eqref{1repace:step2-final1}:
 \begin{align*}
&\frac{\llv \log \vep^{-1}}{\pi} \lla \mathcal E(\varphi) [X_{n-1}-X_{n-1}(z)]\rra \\
&\quad =\frac{ \llv^{3/2}\log \vep^{-1}}{\pi} \left[
 -\lla \mc E(\varphi)\rra \mc K\{\1\}(z)+\lla \mc E(\varphi)\mc K\{\1\}\rra\right]
X_{n-2}(z)\\
&\quad \quad +\frac{\llv^2 \log \vep^{-1}}{\pi}\Big(\lla \mc E(\varphi)\mc K^2\{\1\}\rra-\lla \mc E(\varphi)\mc K\{\1\}\rra \mc K\{\1\}(z)\\
&\quad\quad -\lla \mc E(\varphi)\rra \,[\mc K^2\{\1\}(z)- \mc K\{\1\}(z)^2]\Big)   X_{n-3}(z)+
\mathcal O^{X\prime}_{n-2}+\mathcal O^{X\prime}_{n-3}+\mathcal O^{X\prime}_{n-4}.
\end{align*}
We have proved \eqref{1repace:step2-final1}. \medskip 

\noindent (2$\cc$)  Now we assume $\mu=1$, and recall that $\llv$ is defined by \eqref{def:llv}. Hence, in this case,
\[
\llv \equiv \frac{\pi }{\lla \mc E(\varphi)\rra \log \vep^{-1}}+\frac{\lambda'\pi}{\lla \mc E(\varphi)\rra \log^{3/2} \vep^{-1}}+\frac{\lambda\pi}{\lla \mc E(\varphi)\rra \log^{2} \vep^{-1}}.
\]
Then 
\begin{align}
\frac{\llv\lla \mathcal E(\varphi)\rra \log\vep^{-1}}{\pi}&=1+\frac{\lambda'}{\log^{1/2} \vep^{-1}}+\frac{\lambda}{\log \vep^{-1}},\label{def:llvr}
\end{align}
and we have the following expansion: for $a>0$, 
 \begin{align}
 \begin{split}
\llv^{a}
&=\frac{\pi^{a}}{\lla \mc E(\varphi)\rra^{a}(\log \vep^{-1})^{a}}+\frac{a\pi^{a}\lambda'}{\lla \mc E(\varphi)\rra^{a}(\log \vep^{-1})^{a+1/2}}+ \frac{\mc O_{\varphi,\chi,a}(1)}{(\log \vep^{-1})^{a+1}},\label{def:llv1/2}
\end{split}
\end{align}
which follows by using the expansion below with $x\equiv \pi/(\lla \mc E(\varphi)\rra \log \vep^{-1})$:
\[
(x+xy)^{a}=x^{a}(1+y)^{a}=x^{a}+ax^{a}y+x^{a}\mathcal O_a(y^2)\quad\mbox{ as $y\to 0$}.
\]
Moreover, for $b>0$, taking \eqref{def:llv1/2} with $a=b+1$ implies
\begin{align}
\begin{split}
\frac{\llv^{b+1}\lla \mathcal E(\varphi)\rra \log\vep^{-1}}{\pi}&=\frac{\pi^{b}}{\lla \mc E(\varphi)\rra^{b}(\log \vep^{-1})^{b}}+\frac{(b+1)\pi^{b}\lambda'}{\lla \mc E(\varphi)\rra^{b}(\log \vep^{-1})^{b+1/2}}\\
&\quad+ \frac{\mc O_{\varphi,\chi,b}(1)}{(\log \vep^{-1})^{b+1}}.\label{def:llvb}
\end{split}
\end{align}

We prove the required expansion now by showing that
for each $j=1,2,3,4$, the term $A_j^o(z)X_{n+1-j}(z)$ in \eqref{1replace:eq1goal} can be rewritten as $A_j(z)X_{n+1-j}(z) + \O^{X\prime}_{n+1-j}$. When $j=1$, apply \eqref{def:llv1/2} with $a=1/2$ to \eqref{def:A1o}. For $j=2$, use \eqref{def:llvr} on \eqref{def:A2o}. For $j=3$ and $j=4$, apply \eqref{def:llvb} with $b=1/2$ and $b=1$ to \eqref{def:A3o} and \eqref{def:A4o}, respectively. This completes the proof of \eqref{rec:F}.
\end{proof}

\begin{proof}[End of the proof of Proposition~\ref{prop:Fvep=0}]
Recall that we assume bounded $g\geq 0$ and $\int g\neq 0$.
For the case of $\mu<1$, $X=F$ and $n=\infty$, \eqref{1replace:eq1goal} gives the next equality, followed by its equivalent due to Assumption~\ref{ass:q} and rearrangement: for $z\in \supp(\varphi)$,
\begin{align*}
F_\infty(z)
&=G_q\{g\}(0)+\mu F_\infty(z)+\mc O_{\varphi,\chi,q}\left(\frac{\|F_\infty\|_\varphi}{\log^{1/2} \vep^{-1}}\right)+ \mathcal O_{\varphi,\chi,q}\left(\frac{\|g\|_\infty}{\log^{1/2} \vep^{-1}}\right),\\
F_\infty(z)
&=\frac{G_q\{g\}(0)}{1-\mu}+\mc O_{\varphi,\chi,q}\left(\frac{\|F_\infty\|_\varphi}{\log^{1/2} \vep^{-1}}\right)+\mathcal O_{\varphi,\chi,q}\left(\frac{\|g\|_\infty}{\log^{1/2} \vep^{-1}}\right).
\end{align*}
More specifically, the term $\mu F_\infty(z)$ on the right-hand side of the first equality follows by using the first term in $A_2^o(z)$ and the definition \eqref{def:llv} of $\lv$.
Because $G_q\{g\}(0)>0$ and $F_\infty\geq 0$, the last equality  implies $\limsup_{\vep\to 0}\|F_\infty\|_\varphi<\infty$ and the required limit of Proposition~\ref{prop:Fvep=0} for $\mu<1$. 

The other two limits of Proposition~\ref{prop:Fvep=0} are for $\mu=1$. In this case, \eqref{rec:F} shows 
\begin{align}\label{rec:F1}
F_\infty(z)&=G_q\{g\}(0)+(A_1+A_2+A_3+A_4)(z)F_\infty (z)+\mc O^{X\prime}_{\infty,\infty},
\end{align}
where $A_1,A_2,A_3,A_4$ are defined in \eqref{def:a}--\eqref{def:d}. Note that 
\begin{align}
(A_1+A_2+A_3+A_4)(z)&=1-\frac{\mathfrak C_{\lambda'}}{\log^{1/2} \vep^{-1}}-\frac{\mathfrak C_{\lambda',\lambda,q}(z)}{\log \vep^{-1}}\label{rec:F2}
\end{align}
for $\mathfrak C_{\lambda'}$ defined by \eqref{def:Clambda'} and 
\begin{align*}
\mathfrak C_{\lambda',\lambda,q}(z)&\defeq\,
\left(\lambda'+\frac{\pi^{1/2}\lla \mc E(\varphi)\mc K\{\1\}\rra }{\lla \mc E(\varphi)\rra^{3/2}}\right)\frac{\pi^{1/2}\mc K\{\1\}(z)}{\lla \mc E(\varphi)\rra ^{1/2}}-\lambda -\log \frac{2^{1/2}}{q^{1/2}}+\EM\\
&\quad-\frac{3\lambda'\pi^{1/2}\lla \mc E(\varphi)\mc K\{\1\}\rra}{2\lla \mc E(\varphi)\rra^{3/2} }-\frac{\pi\lla \mc E(\varphi)\mc K^2\{\1\}\rra }{\lla \mc E(\varphi)\rra^2 }.
\end{align*}
Note that $\mathfrak C_{\lambda',\lambda,q}(z)$ reduces to $\mathfrak C_{\lambda,q}$ in \eqref{def:Clambdaq} when $\mathfrak C_{\lambda'}=0$. Hence,
applying \eqref{rec:F1}, \eqref{rec:F2} and Assumption~\ref{ass:q} to the procedure in \eqref{Finfty:X}--\eqref{Finfty:X2}, we obtain the two required limits of Proposition~\ref{prop:Fvep=0} for $\mu=1$. The proof of Proposition~\ref{prop:Fvep=0} is complete.
\end{proof}

\subsection{Geometric bounds}\label{sec:geo}
We proceed to prove Proposition~\ref{prop:gbdd}, relying on the higher-order expansions in Lemma~\ref{lem:signexpansion}. Our primary technical concern is the case $\mu=1$, for which the key equation is provided by the following restatement of \eqref{rec:F}: for  all  fixed $q\in (0,\infty)$ and $n\in \Bbb Z$, 
\begin{align}
\label{rec:F11}
\begin{split}
X_{n+1}&=I^X_{n+1}(0)+(A_1+A_3)X_n+(A_2+A_4)X_{n-1}-A_3(X_n-X_{n-2})\\
&\quad-A_4(X_{n-1}-X_{n-3})+\mc O^{X\prime}_{n,n-4}\quad\mbox{uniformly on $\supp(\varphi)$.}
\end{split}
\end{align}

The first notable aspect of \eqref{rec:F11} is as follows. Based on the definitions \eqref{def:a}--\eqref{def:d} for $A_1, A_2, A_3, A_4$, the coefficients for the second and third terms on the right-hand side of \eqref{rec:F11} can be explicitly determined as follows:  
\begin{align*}
A_1+A_3&=\frac{\frac{\pi^{1/2}\slla \mc E(\varphi)\mc K\{\1\}\srra}{\slla \mc E(\varphi)\srra^{3/2} }}{\log^{1/2} \vep^{-1}}+\frac{-\frac{\lambda'\pi^{1/2}\mc K\{\1\}}{\slla \mc E(\varphi)\srra^{1/2} }+ \frac{3\lambda'\pi^{1/2}\slla \mc E(\varphi)\mc K\{\1\}\srra}{2\slla \mc E(\varphi)\srra^{3/2} }
}{\log \vep^{-1}},\\
A_2+A_4&=1+\frac{\lambda'}{\log^{1/2} \vep^{-1}} +\frac{\lambda +\log \frac{2^{1/2}}{q^{1/2}}-\EM+
\frac{\pi\slla \mc E(\varphi)\mc K^2\{\1\}\srra -\pi\slla \mc E(\varphi)\mc K\{\1\}\srra \mc K\{\1\}}{\slla \mc E(\varphi)\srra^2 }
}{\log \vep^{-1}}.
\end{align*}
Examining the coefficients of $(\log \vep^{-1})^{-1/2}$ in the previous two equations then reveals an initial connection to the crucial constant $\mathfrak C_{\lambda'}$, defined in \eqref{def:Clambda'}. 

The following lemma advances the analysis of \eqref{rec:F11}. In part (1$\cc$), we establish asymptotic expansions for the fourth and fifth terms on the right-hand side of \eqref{rec:F11}. In part (2$\cc$), we derive a recursive inequality, which forms the basis for proving the geometric bounds in Proposition~\ref{prop:gbdd} (1$\cc$).

\begin{lem}\label{lem:recF}
Set $\mu=1$ in $\llv$ defined by \eqref{def:llv}, and let $X\in \{F,D\}$. \medskip 

\noindent {\rm (1$\cc$)} For any fixed $q\in (0,\infty)$, the following holds for all $n\in \Bbb Z$ and all $\vep\in (0,\ol{\vep}(\chi)]$:
\begin{align}
 X_{n}-X_{n-2}
&=I^X_n(0)+
\llv^{1/2}\mc K\{X_{n-1}\}\notag\\
&\quad +\Biggl(\frac{\lambda'}{\log^{1/2} \vep^{-1}}+\frac{\lambda+\log \frac{2^{1/2}}{q^{1/2}}-\EM}{\log \vep^{-1}}+\frac{\big|\log \frac{2^{1/2}}{q^{1/2}}-\EM \big|\O_{\varphi,\chi}\big(1)}{\log^{3/2} \vep^{-1}}\Biggr)X_{n-2}\notag\\
&\quad +\O^{X\prime}_{n-1,n-3}\quad\mbox{uniformly on $ \supp(\varphi)$.}\label{bdd:dF}
\end{align}

\noindent {\rm (2$\cc$)} Recall $\mathfrak C_{\lambda'}$ defined in \eqref{def:Clambda'}, and set
\begin{align}\label{def:alpha}
\alpha_n&\defeq\|X_n\|_\varphi,\\
\begin{split}\label{def:in}
i_{n+1}&\,\defeq\, |I^X_{n+1}(0)|+\|A_3\|_\varphi|I^X_n(0)|+\|A_4\|_\varphi|I^X_{n-1}(0)|\\
&\quad\;+\biggl(\1_{X=F}+\1_{X=D}\sum_{j=0}^4\1_{n+1-j=0}\biggr)\mc O_{\varphi,\chi,q}(\|g\|_\infty),
\end{split}\\
\delta_{\#}=\delta_{\#}(\vep)&\,\defeq\,\frac{\frac{\pi^{1/2}\slla \mc E(\varphi)\mc K\{\1\}\srra}{\slla \mc E(\varphi)\srra^{3/2} }}{\log^{1/2} \vep^{-1}}+\frac{|\mathcal O_{\varphi,\chi}(1)|}{\log \vep^{-1}}+
\frac{|\mc O_{\varphi,\chi,q}(1)|}{\log^{3/2} \vep^{-1}},\notag\\
\delta_{\#}'=\delta_{\#}'(\vep,q)&\,\defeq\,
\frac{\mathfrak C_{\lambda'}}{\log^{1/2} \vep^{-1}}-\frac{\log\frac{1}{q^{1/2}}+|\mathcal O_{\varphi,\chi}(1)|}{\log \vep^{-1}}-\frac{|\mc O_{\varphi,\chi,q}(1)|}{\log^{3/2} \vep^{-1}},\notag\\
\delta_{\#}''=\delta_{\#}''(\vep,q)&\,\defeq\,\frac{|\mathcal O_{\varphi,\chi}(1)|}{\log \vep^{-1}}+\frac{|\mathcal O_{\varphi,\chi,q}(1)|}{\log^{3/2} \vep^{-1}}\notag
\end{align}
for all $\vep\in (0,\ol{\vep}(\chi)]$.
Assume $\lla \mc E(\varphi)\mc K\{\1\}\rra\geq 0$ and $\mathfrak C_{\lambda'}\geq 0$, so that the coefficients of $\log^{-1/2}\vep^{-1}$ in $\delta_\#$ and $\delta'_\#$ are nonnegative. 

In the case of $n\in \Bbb Z$, there exists $q_0(\varphi,\chi)\in (0,\infty)$ such that 
 for all $q> q_0(\varphi,\chi)$, the following holds:
\begin{enumerate}
\item [\rm (a)] the coefficient of $\log^{-1}\vep^{-1}$ in $\delta'_\#$ is strictly positive;
\item [\rm (b)] for all sufficiently small $\vep\in (0,\ol{\vep}(\chi)]$ independent of $n\in \Bbb Z$, we have
\begin{align}\label{ineq:alpha}
\begin{split}
\alpha_{n+1}&\leq i_{n+1}+\delta_\#\alpha_n+(1-\delta_{\#}-\delta_{\#}')\alpha_{n-1}\\
&\quad+\delta_{\#}''\alpha_{n-2}+\delta_{\#}''\alpha_{n-3}+\delta_{\#}''\alpha_{n-4},\quad \forall\;n\in \Bbb Z.
\end{split}
\end{align} 
\end{enumerate}
Additionally, if $n=\infty$ and $q\in (0,\infty)$ is sufficiently large such that condition {\rm (a)} holds, and $q$ satisfies Assumption~\ref{ass:q} when $X=F$ or the condition that $D_\infty$ exists for all sufficiently small $\vep\in (0,\ol{\vep}(\chi)]$ when $X=D$, then,  for even smaller $\vep\in (0,\ol{\vep}(\chi)]$, \eqref{ineq:alpha} with $n=\infty$ and the corresponding $X$ also holds.  
\end{lem}
\begin{proof}
(1$\cc$) The proof follows a slight modification of the argument used in Lemma~\ref{lem:signexpansion} (1$\cc$). Specifically, we begin by considering the following expansion, which is implied by the definition \eqref{def:Om'} of $\mc O^{X\prime}_m$  and by
\eqref{1replace:eq} with $n+1$ replaced by $n-1$:
\begin{align}
X_{n}
=I^X_n(0)+\frac{\llv^{1/2}}{\pi}\biggl(\log \frac{2^{1/2}}{q^{1/2}\vep}-\EM\biggr)\lla \varphi X_{n-1}\rra+\llv^{1/2}\mc K\{X_{n-1}\}+\mathcal O^{X\prime}_{n-1}.\label{bdd:dF:1}
\end{align}
The second term on the right-hand side can be expanded further as follows, where the first equality 
uses \eqref{1replace}, the second equality uses \eqref{F1replace}, and the fourth equality uses \eqref{def:llvr} since we choose $\mu=1$ now:
\begin{align}
&\frac{\llv^{1/2}}{\pi}\biggl(\log \frac{2^{1/2}}{q^{1/2}\vep}-\EM\biggr)\lla \varphi X_{n-1}\rra\notag\\
&\quad =\frac{\llv}{\pi}\biggl(\log \frac{2^{1/2}}{q^{1/2}\vep}-\EM\biggr)\lla \mc E(\varphi) X_{n-2}\rra+\mc O^{X\prime}_{n-2}\notag\\
&\quad =\frac{\llv}{\pi}\biggl(\log \frac{2^{1/2}}{q^{1/2}\vep}-\EM\biggr)\lla \mc E(\varphi)\rra X_{n-2}+\mc O^{X\prime}_{n-2}+\mc O^{X\prime}_{n-3}\notag\\
&\quad =\frac{\llv \lla \mc E(\varphi)\rra\log \vep^{-1}}{\pi} X_{n-2}+\frac{\llv \lla \mc E(\varphi)\rra}{\pi} \biggl(\log \frac{2^{1/2}}{q^{1/2}}-\EM\biggr) X_{n-2}+\O^{X\prime}_{n-2,n-3}\notag\\
&\quad =\biggl(1+\frac{\lambda'}{\log^{1/2} \vep^{-1}}+\frac{\lambda+\log \frac{2^{1/2}}{q^{1/2}}-\EM}{\log \vep^{-1}}+\frac{\big|\log \frac{2^{1/2}}{q^{1/2}}-\EM \big|\O_{\varphi,\chi}\big(1)}{\log^{3/2} \vep^{-1}}\biggr)X_{n-2}\notag\\
&\quad\quad +\O^{X\prime}_{n-2,n-3}.\label{bdd:dF:2}
\end{align}
We obtain \eqref{bdd:dF} upon combining \eqref{bdd:dF:1} and \eqref{bdd:dF:2} and rearranging. \medskip

\noindent {\rm (2$\cc$)} In the case of $n\in \Bbb Z$,
by applying \eqref{bdd:dF} to $X_n-X_{n-2}$ and $X_{n-1}-X_{n-3}$ in \eqref{rec:F11}, we get
\begin{align}
X_{n+1}
&=[I^X_{n+1}(0)-A_3I^X_n(0)-A_4I^X_{n-1}(0)]+(A_1+A_3)X_n\notag\\
&\quad+(A_2+A_4)X_{n-1}-A_3\llv^{1/2}\mc K\{X_{n-1}\}\notag\\
&\quad-A_3\biggl(\frac{\lambda'}{\log^{1/2} \vep^{-1}}+\frac{\lambda+\log \frac{2^{1/2}}{q^{1/2}}-\EM}{\log \vep^{-1}}+\frac{\big|\log \frac{2^{1/2}}{q^{1/2}}-\EM \big|\O_{\varphi,\chi}\big(1)}{\log^{3/2} \vep^{-1}}\biggr)X_{n-2}\notag\\
&\quad-A_4\llv^{1/2}\mc K\{X_{n-2}\}\notag\\
&\quad-A_4\biggl(\frac{\lambda'}{\log^{1/2} \vep^{-1}}+\frac{\lambda+\log \frac{2^{1/2}}{q^{1/2}}-\EM}{\log \vep^{-1}}+\frac{\big|\log \frac{2^{1/2}}{q^{1/2}}-\EM \big|\O_{\varphi,\chi}\big(1)}{\log^{3/2} \vep^{-1}}\biggr)X_{n-3}\notag\\
&\quad+\mc O^{X\prime}_{n,n-4}.\label{bdd:dF:mainbdd0}
\end{align}
Next, we apply the formulas for $A_1+A_3$, $A_2+A_4$, $A_3$, and $A_4$ given below \eqref{rec:F11} and in \eqref{def:c}--\eqref{def:d}. Observe that $|A_2+A_4| =A_2+A_4$ as $\vep \to 0$, and that $q$ appears in $A_2$ but not in $A_1$, $A_3$, or $A_4$. By expressing $\mathcal O^{X\prime}_{n,n-4}$ in summation form as in \eqref{def:O'n1n2}, and using the definition of $\alpha_n$ from \eqref{def:alpha}, \eqref{bdd:dF:mainbdd0} yields the following inequality for all sufficiently small $\vep\in (0,\ol{\vep}(\chi)]$ independent of $n\in \Bbb Z$:
\begin{align}
\alpha_{n+1}&\leq |I^X_{n+1}(0)|+\|A_3\|_\varphi|I^X_n(0)|+\|A_4\|_\varphi|I^X_{n-1}(0)|\notag\\
&\quad+\biggl[\biggl(\frac{\frac{\pi^{1/2}\slla \mc E(\varphi)\mc K\{\1\}\srra}{\slla \mc E(\varphi)\srra^{3/2} }}{\log^{1/2} \vep^{-1}}+\frac{|\mathcal O_{\varphi,\chi}(1)|}{\log \vep^{-1}}\biggr)\alpha_n+\mathcal O^{X\prime}_n\biggr]\notag\\
&\quad+\biggl[ \biggl(1+\frac{\lambda'}{\log^{1/2} \vep^{-1}}+\frac{\log\frac{1}{q^{1/2}}+|\mathcal O_{\varphi,\chi}(1)|}{\log \vep^{-1}}\biggr)\alpha_{n-1}+\mathcal O^{X\prime}_{n-1}\biggr]\notag\\
&\quad+\biggl\{\frac{\mathcal O_{\varphi,\chi}(1)\big[1+|\log\frac{1}{q^{1/2}}|(\log \vep^{-1})^{-1/2}\big]}{\log \vep^{-1}}\alpha_{n-2}+\mathcal O^{X\prime}_{n-2}\biggr\}\notag\\
&\quad+\biggl\{\frac{\mathcal O_{\varphi,\chi}(1)\big[1+|\log\frac{1}{q^{1/2}}|(\log \vep^{-1})^{-1/2}\big]}{\log^{3/2} \vep^{-1}}\alpha_{n-3}+\mathcal O^{X\prime}_{n-3}\biggr\}+\mathcal O^{X\prime}_{n-4},\label{bdd:dF:mainbdd}
\end{align}
which is enough to get \eqref{ineq:alpha} for finite $n$ upon recalling the definitions \eqref{def:Clambda'}, \eqref{def:Om} and \eqref{def:Om'} of $\mathfrak C_{\lambda'}$, $\mathcal O^{X}_m$ and $\mathcal O^{X\prime}_m$. This proves our assertion for finite $n$. 

For $n=\infty$, \eqref{rec:F} still ensures \eqref{rec:F11}. In this case, $X_n - X_{n-2} = X_\infty - X_\infty = 0$ and $X_{n-1} - X_{n-3} = X_\infty - X_\infty = 0$. Thus, \eqref{bdd:dF:mainbdd} holds for $n=\infty$ by a more straightforward argument. This proves \eqref{ineq:alpha} with $n=\infty$ and the corresponding $X$  for all sufficiently small $\vep\in (0,\ol{\vep}(\chi)]$.
 \end{proof}

Next, we introduce an assumption and study the corresponding recursive inequalities. A detailed verification showing that this assumption generalizes the property of $\{\alpha_n\}_{n\in \Bbb Z}$ in Lemma~\ref{lem:recF} (2$\cc$) for the case $X=D$ will be provided when we end the proof of Proposition~\ref{prop:gbdd}.

\begin{ass}\label{ass:beta}
$\{\beta_n\}_{n\in \Bbb Z}$ is a sequence of nonnegative numbers satisfying 
the following inequality for all $n\in \Bbb Z$:
\begin{align}\label{def:beta}
\beta_{n+1}\leq j_{n+1}+ \delta\beta_n+(1-\delta-\delta')\beta_{n-1}+\delta''\beta_{n-2}+\delta''\beta_{n-3}+\delta''\beta_{n-4},
\end{align}
for a sequence $\{j_n\}_{n\in \Bbb Z}$ of nonnegative numbers and $\delta,\delta',\delta''\in [0,1]$, independent of $n$, such that $\delta+\delta'\leq 1$. 
 \qed 
\end{ass}

\begin{lem}\label{lem:abcd}
For all $n\in \Bbb Z$ and $k\in \Bbb Z_+$, it holds that 
\begin{align}\label{rec:a}
\beta_{n+1}\leq a_{n+1,k}+ b_k\beta_{n-k}+c_k\beta_{n-k-1}+d_k\beta_{n-k-2}+e_k\beta_{n-k-3}+f_k\beta_{n-k-4},
\end{align}
where $a_{n+1,0}\,\defeq\,j_{n+1}$, $b_0\,\defeq\,\delta$, $c_0\,\defeq\,1-\delta-\delta'$, $d_0=e_0=f_0\,\defeq\,\delta''$,  and the following recursive equations define $a_{n+1,k+1},b_{k+1},c_{k+1},d_{k+1},e_{k+1}$ for all integers $k\geq 0$: 
\begin{align}
\begin{aligned}\label{def:abcd}
a_{n+1,k+1}&\defeq\,a_{n+1,k}+b_kj_{n-k},  
&b_{k+1}&\defeq\,b_k\delta+c_k, 
& c_{k+1}&\defeq\,b_k(1-\delta-\delta')+d_k,\\
 d_{k+1}&\defeq\,b_k\delta''+e_k, 
 & e_{k+1}&\defeq\,b_k\delta''+f_k, &f_{k+1}&\defeq\, b_k\delta''.
 \end{aligned}
\end{align}
In particular, 
\begin{align}
a_{n+1,k+1}&=j_{n+1}+b_0j_{n}+\cdots+b_kj_{n-k},&& \forall\;k\geq 0,\label{rec:abcd'}\\
b_{k+1}&=b_k\delta+b_{k-1}(1-\delta-\delta')+b_{k-2}\delta''+b_{k-3}\delta''+b_{k-4}\delta'',&& \forall\;k\geq 4.\label{rec:abcd}
\end{align}
\end{lem}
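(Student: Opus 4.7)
The plan is to prove \eqref{rec:a} by induction on $k\geq 0$, with the base case $k=0$ being exactly the hypothesis \eqref{def:beta} together with the prescribed initial values $a_{n+1,0}=j_{n+1}$, $b_0=\delta$, $c_0=1-\delta-\delta'$, and $d_0=e_0=f_0=\delta''$. For the induction step, assume \eqref{rec:a} at level $k$ for every $n\in\Bbb Z$, and apply \eqref{def:beta} with $n+1$ replaced by $n-k$ to get
\begin{align*}
\beta_{n-k}&\leq j_{n-k}+\delta\beta_{n-k-1}+(1-\delta-\delta')\beta_{n-k-2}\\
&\quad+\delta''\beta_{n-k-3}+\delta''\beta_{n-k-4}+\delta''\beta_{n-k-5}.
\end{align*}
The condition $\delta+\delta'\leq 1$ guarantees $1-\delta-\delta'\geq 0$, so since $b_k\geq 0$ (which itself follows by an immediate side-induction from $\delta,\delta',\delta''\geq 0$), substituting this bound into the term $b_k\beta_{n-k}$ of \eqref{rec:a} preserves the inequality. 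Collecting the constant term and the coefficients of $\beta_{n-k-1},\beta_{n-k-2},\beta_{n-k-3},\beta_{n-k-4},\beta_{n-k-5}$ reproduces exactly the five recursions in \eqref{def:abcd}, which yields \eqref{rec:a} at level $k+1$.

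For the closed form \eqref{rec:abcd'}, I would simply unroll the one-step recursion $a_{n+1,k+1}=a_{n+1,k}+b_kj_{n-k}$ from the initial value $a_{n+1,0}=j_{n+1}$.

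To derive the linear recurrence \eqref{rec:abcd} for $\{b_k\}$, I would eliminate $c_k,d_k,e_k,f_k$ from \eqref{def:abcd} by back-substitution. From $f_{k+1}=b_k\delta''$ I get $f_k=b_{k-1}\delta''$; then $e_{k+1}=b_k\delta''+f_k$ gives $e_k=(b_{k-1}+b_{k-2})\delta''$; then $d_{k+1}=b_k\delta''+e_k$ gives $d_k=(b_{k-1}+b_{k-2}+b_{k-3})\delta''$; then $c_{k+1}=b_k(1-\delta-\delta')+d_k$ gives
\[
c_k=b_{k-1}(1-\delta-\delta')+(b_{k-2}+b_{k-3}+b_{k-4})\delta''.
\]
Substituting into $b_{k+1}=b_k\delta+c_k$ yields \eqref{rec:abcd}; the requirement $k\geq 4$ is precisely what makes every index on the right-hand side nonnegative so that all the intermediate identities apply.

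There is no real obstacle here; the statement is a bookkeeping lemma that unfolds \eqref{def:beta} by one more step per induction stage, and the only care needed is to verify nonnegativity of the coefficient $1-\delta-\delta'$ (from the hypothesis $\delta+\delta'\leq 1$) before substituting an upper bound on $\beta_{n-k}$, and to keep the coefficient vectors $(b_k,c_k,d_k,e_k,f_k)$ aligned with the shifted indices in \eqref{rec:a}.
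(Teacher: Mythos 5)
Your proof is correct and follows essentially the same route as the paper's: induction on $k$ for \eqref{rec:a} by substituting the hypothesis \eqref{def:beta} (with $n+1\to n-k$) into the term $b_k\beta_{n-k}$, using $b_k\geq 0$, and then deriving \eqref{rec:abcd} by back-substituting the recursions for $f_k,e_k,d_k,c_k$ into that for $b_{k+1}$. The only cosmetic difference is that you make the nonnegativity of $b_k$ (and hence the legitimacy of the substitution) explicit, whereas the paper states it without the side-induction remark.
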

\begin{proof}
We prove \eqref{rec:a} by induction on $k \in \Bbb Z_+$, for any fixed $n \in \Bbb Z$. When $k=0$, \eqref{rec:a} follows directly from \eqref{def:beta}. Suppose \eqref{rec:a} holds for some integer $k \geq 0$. This assumption yields the first inequality below, and the second inequality follows from the nonnegativity of $b_k$ and from applying \eqref{def:beta} with $n+1$ replaced by $n-k$: 
\begin{align*}
\beta_{n+1}&\leq a_{n+1,k}+ b_k\beta_{n-k}+c_k\beta_{n-k-1}+d_k\beta_{n-k-2}+e_k\beta_{n-k-3}+f_k\beta_{n-k-4}\\\
&\leq a_{n+1,k}+b_k[j_{n-k}+ \delta\beta_{n-k-1}+(1-\delta-\delta')\beta_{n-k-2}+\delta''\beta_{n-k-3}+\delta''\beta_{n-k-4}\\
&\quad +\delta''\beta_{n-k-5}]+c_k\beta_{n-k-1}+d_k\beta_{n-k-2}+e_k\beta_{n-k-3}+f_k\beta_{n-k-4}\\
&=(a_{n+1,k}+b_kj_{n-k})+(b_k\delta+c_k)\beta_{n-k-1}+[b_k(1-\delta-\delta')+d_k]\beta_{n-k-2}\\
&\quad+(b_k\delta''+e_k)\beta_{n-k-3}+(b_k\delta''+f_k)\beta_{n-k-4}+b_k\delta''\beta_{n-k-5}.
\end{align*}
By \eqref{def:abcd}, \eqref{rec:a} with $k$ replaced by $k+1$ holds. Hence, \eqref{rec:a}  holds for all $k\in \Bbb Z_+$ by induction.

Finally, to obtain \eqref{rec:abcd}, we note that by \eqref{def:abcd},
\begin{align*}
f_{k+1}&=b_k\delta'', &&\forall\;k\geq 0,\\
e_{k+1}&=b_k\delta''+b_{k-1}\delta'',&& \forall\;k\geq 1,\\
d_{k+1}&=b_k\delta''+(b_{k-1}\delta''+b_{k-2}\delta''),&&  \forall\;k\geq 2,\\
c_{k+1}&=b_k(1-\delta-\delta')+[b_{k-1}\delta''+(b_{k-2}\delta''+b_{k-3}\delta'')],&&  \forall\;k\geq 3,\\
b_{k+1}&=b_k\delta+\{b_{k-1}(1-\delta-\delta')+[b_{k-2}\delta''+(b_{k-3}\delta''+b_{k-4}\delta'')]\},&&  \forall\;k\geq 4.
\end{align*}
The last equation gives \eqref{rec:abcd}. 
\end{proof}

\begin{lem}\label{lem:geo}
Let $\{b_k\}_{k\in \Bbb Z_+}$ be the sequence defined in Lemma~\ref{lem:abcd}. Suppose that in addition to Assumption~\ref{ass:beta}, there exist $b_\star>0$ and $\theta\in (0,1)$ such that all of
the following conditions hold:
\begin{itemize}
\item [\rm (a)] $4\theta\delta'< 1$.  
\item [\rm (b)] $
14\delta''\leq (1-\theta)\delta'$.
\item [\rm (c)] $b_k\leq b_\star(1-2\theta\delta')^k$ for all $k=0,1,2,3,4$.
\end{itemize}
Then $ b_{k}\leq b_\star(1-\theta\delta'/2)^{k}$ for all $k\in \Bbb Z_+$.
\end{lem}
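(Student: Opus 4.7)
The strategy is strong induction on $k$, with target geometric rate $r\defeq 1-\theta\delta'/2$, so that $r=1-s$ where $s\defeq\theta\delta'/2$. For the base cases $k=0,1,2,3,4$, condition (c) gives $b_k\le b_\star(1-2\theta\delta')^k$, and since $1-2\theta\delta'\le 1-\theta\delta'/2=r$, these cases are immediate.

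For the inductive step, I would assume $b_j\le b_\star r^j$ for all $0\le j\le k$ with $k\ge 4$, apply the recursion \eqref{rec:abcd}, bound each term by the induction hypothesis, and factor out $b_\star r^{k-4}$. The target inequality $b_{k+1}\le b_\star r^{k+1}$ then reduces to the purely algebraic statement
\begin{align*}
r^5 \;\ge\; r^4\delta + r^3(1-\delta-\delta') + \delta''(r^2+r+1).
\end{align*}

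The key manipulation is the factorization
\begin{align*}
r^5-r^4\delta-r^3(1-\delta-\delta')=r^3\bigl[(r-1)(r+1-\delta)+\delta'\bigr]=r^3\bigl[\delta'-s(r+1-\delta)\bigr].
\end{align*}
Since $r+1-\delta\le 2$, this is at least $r^3(\delta'-2s)=r^3(1-\theta)\delta'$. Condition (a) gives $s<1/8$, hence $r^3\ge 1-3s>5/8$, while trivially $r^2+r+1\le 3$. The desired inequality thus reduces to $(5/8)(1-\theta)\delta'\ge 3\delta''$, i.e.\ $(1-\theta)\delta'\ge 24\delta''/5$, which is implied by condition (b) since $14\ge 24/5$.

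The only non-routine step is spotting the factorization $(r-1)(r+1-\delta)+\delta'$; everything else is bookkeeping to track which of (a), (b), (c) controls which error. I expect no real obstacle: the slack in (b) (constant $14$ versus the required $24/5$) indicates the hypotheses are chosen comfortably, and condition (a) is precisely what ensures $r^3$ stays bounded away from zero so that the small perturbation term $\delta''(r^2+r+1)$ is absorbed.
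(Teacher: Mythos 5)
Your proof is correct. The base cases, the reduction to
\[
r^5 \;\ge\; r^4\delta + r^3(1-\delta-\delta') + \delta''(r^2+r+1)\quad\text{with }r=1-\theta\delta'/2,
\]
and the factorization $r^2 - r\delta - (1-\delta-\delta') = (r-1)(r+1-\delta)+\delta' = \delta' - s(r+1-\delta) \ge (1-\theta)\delta'$ (using $s=\theta\delta'/2$, $r+1-\delta\le 2$) all check out, as do the crude bounds $r^3 > 5/8$ and $r^2+r+1 \le 3$ and the final appeal to $14 \ge 24/5$. However, your route is genuinely different from the paper's. The paper introduces an auxiliary $\rho$ as the smaller root of $\rho^2 - (2-\delta)\rho + \theta\delta' = 0$, translates (a)--(c) into conditions (a$'$)--(c$'$) phrased in terms of $\rho$, runs the induction at rate $1-\rho$ using the exact identity $(1-\rho)\delta + (1-\delta-\theta\delta') = (1-\rho)^2$, and only at the end invokes the elementary bound $\rho \ge \theta\delta'/2$ to recover the stated rate. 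You instead run the induction directly at the target rate $r$, replacing the exact-root identity by a polynomial factorization plus explicit one-sided estimates. Your version is shorter and avoids the auxiliary quantity $\rho$ and the intermediate conditions entirely; the paper's version makes transparent that $\rho$ is an exact decay rate for the simplified two-term recursion (the $\delta''$ terms being absorbed into $\theta\delta'$ via (b$'$)), which is conceptually informative but costs an extra layer of bookkeeping. Both correctly exploit the slack built into (b): you use $14 \ge 24/5$, the paper uses $14 \ge (1-\rho)^{-1}+(1-\rho)^{-2}+(1-\rho)^{-3}$ via $2\theta\delta' < 1/2$.
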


\begin{proof}
We prove the lemma by showing the following two implications:
\begin{align}\label{abc:imply}
[\mbox{(a), (b), (c)}]\Longrightarrow [\mbox{(a'), (b'), (c')}]\Longrightarrow b_k\leq b_\star (1-\theta\delta'/2)^k,\;\forall\; k\in \Bbb Z_+.
\end{align}
Here, conditions (a'), (b') and (c') are defined as follows: 
\begin{itemize}
\item [\rm (a')] $4\theta\delta'/(2-\delta)^2< 1$.  
\item [\rm (b')] $[(1-\rho)^{-1}+(1-\rho)^{-2}+(1-\rho)^{-3}]\delta''\leq (1-\theta)\delta'$, where
$\rho$ is a zero of $\rho^2-(2-\delta)\rho+\theta\delta'=0$, given by
\begin{align}\label{def:x0}
\rho\,\defeq\, \frac{(2-\delta)-\sqrt{(2-\delta)^2-4\theta\delta'}}{2}=\frac{(2-\delta)[1-\sqrt{1-4\theta\delta'/(2-\delta)^2}\,]}{2}.
\end{align}
\item [\rm (c')] $b_k\leq b_\star(1-\rho)^k$ for all $k=0,1,2,3,4$.
\end{itemize}
Since $\delta\in [0,1]$ by assumption, (a') implies that $\rho\in (0,1)$. 
We remark that the use of (a)--(c) is meant to save algebra when it comes to the forthcoming applications.  

For the remainder of this proof, we use the following inequalities:
\begin{gather}
1\leq 2-\delta\leq 2,\label{ineq:d}\\
\frac{x}{2}\leq 1-\sqrt{1-x}\leq x,\quad \forall\;0\leq x\leq 1,\label{ineq:x}\\
\label{rho:tbdd}
\frac{\theta\delta'}{2}\leq \frac{(2-\delta)(4\theta\delta'/(2-\delta)^2)}{4}\leq 
\rho\leq \frac{(2-\delta)(4\theta\delta'/(2-\delta)^2)}{2}=\frac{2\theta\delta'}{2-\delta}\leq 
2\theta\delta'.
\end{gather}
Here, \eqref{ineq:d} uses the condition $\delta\in [0,1]$ from Assumption~\ref{ass:beta}. Also, \eqref{rho:tbdd} holds because the first inequality uses the second inequality of \eqref{ineq:d}, the second and third inequalities are obtained from the second equality in \eqref{def:x0}, (a') and \eqref{ineq:x}, and the last inequality is obtained from the first inequality of \eqref{ineq:d}. 

Now, we show that (a), (b) and (c) imply (a'), (b') and (c').
By the first inequality of \eqref{ineq:d}, it is immediate that (a) implies (a'). By the last inequality of \eqref{rho:tbdd}, the bound $2\theta\delta'< 1/2$ due to (a), and the identity $2+2^2+2^3=14$, we see that (b) implies (b'). Note that (c) implies (c') also by the last inequality of  \eqref{rho:tbdd}. We have proved the first implication in \eqref{abc:imply}.  

To prove the second implication in \eqref{abc:imply}, by the first inequality in \eqref{rho:tbdd}, it suffices to
prove $b_\ell\leq b_\star(1-\rho)^\ell$ for all $0\leq \ell\leq k$  by an induction on integers $k\geq 4$. 
This assertion readily holds for $k=4$ by condition (c'). Suppose that for some $k\geq 4$, $b_{\ell}\leq b_\star(1-\rho)^{\ell}$ for all $0\leq \ell\leq k$. Then by the recursive equation of $b_{k+1}$ in \eqref{rec:abcd}, 
\begin{align}
b_{k+1}&\leq b_\star(1-\rho)^{k}\delta+b_\star(1-\rho)^{k-1}(1-\delta-\delta')+b_\star(1-\rho)^{k-2}\delta''+b_\star(1-\rho)^{k-3}\delta''\notag\\
&\quad\;+b_\star(1-\rho)^{k-4}\delta''\notag\\
&= b_\star(1-\rho)^{k-1}\Big\{(1-\rho)\delta+(1-\delta-\delta')+[(1-\rho)^{-1}+(1-\rho)^{-2}+(1-\rho)^{-3}]\delta''\Big\}
\notag\\
&\leq  b_\star(1-\rho)^{k-1}\left[(1-\rho)\delta+(1-\delta-\theta\delta')\right]=b_\star(1-\rho)^{k+1},\label{eq:geo}
\end{align}
where the second inequality uses condition (b'), and the last equality follows  
since the definition \eqref{def:x0} of $\rho$ shows that it solves the first equation below:
\begin{align*}
0=\rho^2-(2-\delta)\rho+\theta\delta'
&\Longleftrightarrow (1-\rho)\delta+(1-\delta-\theta\delta')=1-2\rho+\rho^2=(1-\rho)^2.
\end{align*}
By \eqref{eq:geo}, we get $b_\ell\leq b_\star(1-\rho)^\ell$ for all $0\leq \ell\leq k+1$. The required assertion 
that $b_\ell\leq b_\star(1-\rho)^\ell$ for all $0\leq \ell\leq k$ and $k\geq 4$ now follows from mathematical induction. 
\end{proof}

\begin{proof}[End of the proof of Proposition~\ref{prop:gbdd}]
(1$\cc$) We consider $\mu=1$ first, using Lemma~\ref{lem:recF} (2$\cc$) with $X=D$. We can find $q_0(\varphi,\chi)\in (0,\infty)$ large to satisfy: 
\begin{align}\label{delta:finalcond}
\begin{split}
&\delta_\#''(\vep,q)/\delta_\#'(\vep,q)\leq (3/4)/14,\quad \delta_\#(\vep),\delta_\#'(\vep,q),\delta_\#''(\vep,q)\in [0,1),\\
&\mbox{for all $q\geq q_0(\varphi,\chi)$ and, for some $\vep_0(q)\in (0,\ol{\vep}(\chi)]$, all $\vep\in  (0,\vep_0(q))$.}
\end{split}
\end{align}
By \eqref{ineq:alpha}, \eqref{def:beta} holds with $j_n=i_n$, $\beta_n=\alpha_n$, $\delta=\delta_{\#}(\vep),\delta'=\delta_{\#}'(\vep,q), \delta''=\delta_{\#}''(\vep,q)$, so that Lemma~\ref{lem:abcd} applies. Conditions (a), (b) and (c) of Lemma~\ref{lem:geo} for $\theta=1/4$ are also satisfied for some universal constant $b_\star$. Therefore,
\begin{gather}
\|D_{n+1}\|_\varphi\leq a_{n+1,n}+b_n\|D_0\|_\varphi,\quad\forall\;n\geq 4, \label{Dn:expbdd}\\
a_{n+1,n}=   b_{n-4}j_4+b_{n-3}j_3+b_{n-2}j_2+b_{n-1}j_1, \quad\forall\;n\geq 4,\label{an:expbdd}\\
b_k\leq b_\star (1-\delta'/8)^k,\quad \forall\; k\in \Bbb Z_+.\label{bk:expbdd}
\end{gather}
Here, \eqref{Dn:expbdd} uses \eqref{rec:a} for $k=n$ and the property that $\alpha_n=0$ for $n<0$  by \eqref{def:IX}; \eqref{an:expbdd} uses \eqref{rec:abcd'}, with $k=n-1$, and the property that  by \eqref{def:IX} and \eqref{def:in},
 $i_{\ell}=0$ for $\ell\geq 5$; \eqref{bk:expbdd} uses Lemma~\ref{lem:geo}. By \eqref{Dn:expbdd}--\eqref{bk:expbdd}, the bounds in Proposition~\ref{prop:gbdd} for $\mu=1$ follow as soon as we choose $q_1(\varphi,\chi)\in (q_0(\varphi,\chi),\infty)$ such that $\mathfrak C_{\lambda,q}>0$. 

For $\mu<1$, \eqref{1replace:eq1goal} with $X=D$ shows that
whether or not $\lla \mc E(\varphi)\mc K\{\1\}\rra\geq 0$ and $\mathfrak C_{\lambda'}\geq 0$,
 \eqref{ineq:alpha} applies by noting $A_2^o=\mu+\mathcal O_{\varphi,\chi}(1)/\log^{1/2}\vep^{-1}$ and
redefining $\delta_\#,\delta_\#',\delta_\#''$ to be $\delta_\#=\delta_\#''=|\O_{\varphi,\chi,q}(1)|/\log^{1/2} \vep^{-1}$ and $\delta_\#'=(1-\mu)-|\O_{\varphi,\chi,q}(1)|/\log^{1/2} \vep^{-1}$. These definitions provide \eqref{delta:finalcond} by enlarging the aforementioned $q_1(\varphi,\chi)$ if necessary. Hence, the remaining inequality of Proposition~\ref{prop:gbdd} (1$\cc$) follows as before. \medskip

\noindent {\rm (2$\cc$)}
To obtain the property that $\|F_\infty\|_\varphi<\infty$, note that for all $j\in \Bbb Z_+$,
\begin{align}
D_j(z)&=\mc T^j_\vep\{z'\mapsto G_q\{g\}(\vep z')\}(z)\notag\\
&=\int_0^\infty \frac{\e^{-qt}}{j!}\E^W_{\vep z}\left[\left(\int_0^t \lv^{1/2}\varphi_\vep(W_s)\d s\right)^{j}g(W_t)\right]\d t\label{Dj:rep}
\end{align}
by the definition \eqref{def:Tvep} of $\mathcal T_\vep$ since by the Markov property of planar Brownian motion,
\begin{align}
&\int_{(\R^2)^{j+1}}\biggl(\prod_{\ell=1}^{j}G_\nu(z_{\ell-1},z_\ell)f(z_\ell)  \biggr)G_\nu(z_{j},z_{j+1})h(z_{j+1}) \d (z_1,\cdots,z_{j+1})\notag\\
&\quad =\E^W_{z_0}\left[\int_{0<t_1<\cdots<t_{j+1}<\infty} \e^{-\nu t_{j+1}}\biggl(\prod_{\ell=1}^{j} f(W_{t_\ell})\biggr)h(W_{t_{j+1}}) \d (t_1,\cdots,t_{j+1})\right]\notag\\
&\quad =\int_0^\infty \frac{\e^{-\nu t_{j+1}}}{j!}\E_{z_0}^W\left[\left(\int_0^{t_{j+1}} f(W_{s})\d s\right)^{j}h(W_{t_{j+1}})\right]\d t_{j+1},\quad \forall\;j\in \Bbb Z_+.\label{symmI1}
\end{align}
Also, we have the following elementary bound: 
\begin{align}\label{ele-exp}
\e^{x}\leq \e+\sum_{n=0}^\infty (2n+1)\frac{x^{2n}}{(2n)!},\quad \forall\; x\in \R,
\end{align} 
which can be seen by noting that $\e^x\leq \sum_{n=0}^\infty x^{2n}/(2n)!$ for $x\leq 0$ and, for $x\geq 0$, 
\[
\e^x= \sum_{n=0}^\infty\frac{x^{2n}}{(2n)!}+\sum_{n=0}^\infty\frac{x^{2n+1}}{(2n+1)!}\leq \sum_{n=0}^\infty\frac{x^{2n}}{(2n)!}+\sum_{n=0}^\infty\frac{1}{(2n+1)!}+\sum_{n=0}^\infty(2n+2)\frac{x^{2n+2}}{(2n+2)!}.
\]
Therefore, we have the following bound of $F_\infty$, where the second inequality uses \eqref{def:F}, \eqref{ele-exp} and monotone convergence, and the equality uses \eqref{Dj:rep}:
\begin{align}
0\leq F_\infty(z)
&\leq \sum_{n=0}^\infty (2n+1)\int_0^\infty \e^{-qt}\E_{\vep z}^W\left[\frac{1}{(2n)!}\left(\int_0^{t} \lv^{1/2}\varphi_\vep (W_{s})\d s\right)^{2n}g(W_{t})\right]\d t\notag\\
&=\sum_{n=0}^\infty (2n+1)D_{2n}( z).\label{Finfty:finalbdd}
\end{align}
By \eqref{Finfty:finalbdd} and the geometric bounds in Proposition~\ref{prop:gbdd} (1$\cc$), the required property $\|F_\infty\|_\varphi<\infty$ follows upon using the fact that $\sum_{n\geq 0}n\alpha^n<\infty$ for any $\alpha\in (0,1)$. 

Finally, to show that $F = F_\infty$ on $\supp(\varphi)$, observe that $\|F_\infty\|_{\varphi} < \infty$ implies \eqref{exp:10} for all sufficiently small $\vep\in (0,\ol{\vep}(\chi)]$. Because $F$ also satisfies this recursive equation and $\|F - F_\infty\|_{\varphi} < \infty$, we can choose $D_\infty \equiv F - F_\infty$. To proceed, we apply the result for $n = \infty$ and $X = D$ from Lemma~\ref{lem:recF} (2$\cc$), a case implying $i_\infty = 0$, and rearrange \eqref{ineq:alpha} for this case. This yields
$0\leq  -\delta'_\#\alpha_\infty+3\delta''_\# \alpha_\infty$. By enlarging $q_1(\varphi,\chi)$ and decreasing $\vep(q)$ from (1$\cc$) if necessary, we see that for all $q\in (q_1(\varphi,\chi),\infty)$
 all $\vep\in (0,\vep(q)]$, $-\delta'_\#+3\delta''_\#< 0$, and so, $0=\alpha_\infty=\|F - F_\infty\|_\varphi $. That is, $F = F_\infty$ on $\supp(\varphi)$. Finally, the series representation $F = \sum_{j=0}^\infty \mc T^j_\vep\{z'\mapsto G_q\{g\}(\vep z')\}$ on $\supp(\varphi)$ follows from the definition of $F$ and (1$\cc$). This completes the proof.
\end{proof}

\addtocontents{toc}{\protect\setcounter{tocdepth}{1}}
\subsection{Range of $\mathfrak C\bs (\bs \varphi\bs )$}\label{sec:range}
\addtocontents{toc}{\protect\setcounter{tocdepth}{2}}
\addcontentsline{toc}{subsection}{\numberline{\thesubsection}{Range of $\mathfrak C(\varphi)$}}

Recall that $\mathfrak C(\varphi)$ is defined by \eqref{def:Cphi}.  We close Section~\ref{sec:=0}
by a specialized version of the property that the range of $\mathfrak C(\varphi)$ is $\R$ as $\varphi$ ranges over bounded, compactly supported functions with $\int\varphi=0$. For the proof, 
we work with the following class of functions:
\begin{align}\label{def:phiR}
\varphi_{R}(z)\,\defeq\,  \1_{B_R}(z)-R^2\1_{B_1}(z)=(1-R^2)\1_{B_1}(z)+\1_{A_{R}}(z),\quad  1<R<\infty,
\end{align}
where $B_r\,\defeq \,\{z\in \Bbb C;|z|\leq r\}$, and $A_{R}\,\defeq\, \{z\in \Bbb C;1<|z|\leq R\}$. 

\begin{prop}
$\{\mathfrak C( \pm \varphi_R);1<R<\infty\}=\R$.
\end{prop}
\begin{proof}
We start by computing $\lla \varphi_R (\kappa \star \varphi_R)^j \rra$ for $j = 1, 2$, where $\varphi_R$ is defined in \eqref{def:phiR}. This is motivated by the following reformulation of \eqref{def:Cphi}: 
\begin{align}\label{Cphi:formula}
\mathfrak C(\varphi)=\pi^{1/2}\lla \varphi(\kappa\star \varphi)^2\rra/\lla \varphi(\kappa\star\varphi)\rra^{3/2},
\end{align} 
 where $\kappa (z)\,\defeq\,\pi^{-1}\log |z|^{-1}$ is defined in \eqref{def:energy}. To find those 
 $\lla \varphi_R (\kappa \star \varphi_R)^j \rra$, recall 
\begin{align}\label{def:kint}
\int_{B_r}\kappa(z-z')\d z'=\frac{(r^2-|z|^2)^+}{2}+r^2\log \left(\frac{1}{|z|\vee r}\right),\quad z\in \Bbb C
\end{align}
(cf. \cite[Proposition~4.9, p.75]{PS:BMP}). Hence, 
\begin{align}
\lla \varphi_{R} (\kappa\star \varphi_{R})^j\rra&=(1-R^2)\int_{|z|\leq 1}\kappa\star \varphi_R(z)^j\d z+\int_{1<|z|\leq R}\kappa\star \varphi_R(z)^j\d z\label{Cphi:1}\\
\begin{split}\label{Cphi:2}
&=(1-R^2)\int_{|z|\leq 1}\left(\frac{(R^2-1)|z|^2}{2}+R^2\log \left(\frac{1}{R}\right)\right)^j\d z\\
&\quad +\int_{1<|z|\leq R}\left(\frac{R^2-|z|^2}{2}+R^2\log \left(\frac{|z|}{R}\right)\right)^j\d z.
\end{split}
\end{align}
Specifically, we obtain \eqref{Cphi:1} by applying the second equality of \eqref{def:phiR}, and \eqref{Cphi:2} by \eqref{def:phiR} and \eqref{def:kint}.
To solve the right-hand side of \eqref{Cphi:2}, note that, by the polar coordinates, $\int f(|z|)\d z=\int 2\pi r f(r)\d r$ for any $f$, so some algebra shows
\begin{align}
\lla \varphi_{R} (\kappa\star \varphi_{R})\rra&=\frac{1}{2}\pi (R^2-R^4)+\pi R^4\log R,\label{eg:1}\\
\lla \varphi_{R} (\kappa\star \varphi_{R})^2\rra&=\frac{\pi}{8}R^2(R^4-1)+\frac{R^4}{2}\pi\left(R^2-2\right)\log (R)\notag\\
&\quad -\pi (R^6-R^4+R^2)\log^2(R).\label{eg:2}
\end{align}
In more detail, to obtain \eqref{eg:2}, we first establish the following formulas:
\begin{align*}
&(1-R^2)\int_0^1 2\pi r\left(\frac{(R^2-1)r^2}{2}+R^2\log \left(\frac{1}{R}\right)\right)^2\d r\\
&\quad=(1-R^2)\left(\frac{\pi R^4}{12}+\pi R^4\log^2 R-\frac{\pi}{2}R^4\log R-\frac{\pi R^2}{6}+\frac{\pi}{2}R^2\log R+\frac{\pi}{12}\right),\\
&\int 2\pi r\left(\frac{R^2-r^2}{2}+R^2\log \left(\frac{r}{R}\right)\right)^2 \d r\\
&\quad =\frac{\pi r^2}{24}\left(24 R^2\log^2\left(\frac{r}{R}\right)+6R^4-3R^2r^2-12 R^2r^2\log\left(\frac{r}{R}\right)+2r^4\right)+C.
\end{align*}

To prove the required property that $\{\mathfrak C( \pm \varphi_R);1<R<\infty\}=\R$, observe $\mathfrak C(\lambda \varphi) = \sgn(\lambda)\mathfrak C(\varphi)$ for any $\lambda \in \R \setminus \{0\}$. Thus, it is enough to show that $\{\mathfrak C(\varphi_R);1 < R < \infty \} \supset (-\infty, 0]$. We then apply \eqref{Cphi:formula} and note the following properties: (1) by  \eqref{eg:1} and \eqref{eg:2}, $\mathfrak C(\varphi_R)\sim \pi^{1/2}(-\pi R^6\log^2R)/(\pi R^4\log R)^{3/2}\to-\infty$ as $R\to\infty$; (2) $\lla \varphi_R(\kappa\star \varphi_R)^2\rra=0$ for some $R>1$ since by \eqref{eg:2}, $\lim_{R\searrow 1}\lla \varphi_R(\kappa\star \varphi_R)^2\rra=0$, 
$\lla \varphi_R(\kappa\star \varphi_R)^2\rra|_{R=1.01}\approx 4.18754\times 10^{-6}$, so the intermediate value theorem ensures the existence of a root of $\lla \varphi_R(\kappa\star \varphi_R)^2\rra=0$ in $(1,\infty)$. By the intermediate value theorem again, these two properties imply $\{\mathfrak C( \varphi_R);1<R<\infty\}\supset(-\infty,0]$. The proof is complete.
\end{proof}

\section{Asymptotics of time-changed functionals}\label{sec:>0}
In this section, we turn to the proofs of the main results in Section~\ref{sec:KRreg}. The proof of Theorem~\ref{thm:>0} (1$\cc$) is obtained at the end of Section~\ref{sec:level} as we will prove \eqref{def:Gf} and \eqref{kernel:expexp} for more general Markov processes first. Regarding Theorem~\ref{thm:>0} (2$\cc$), the expansion is an immediate consequence of Proposition~\ref{prop:5}, which gives more precise asymptotic representations and satisfies a stronger mode of convergence. 

{\bf Throughout this section, unless otherwise mentioned, we will work with $\varphi,\lv,g$ such that $\lla \varphi\rra>0$, (\ref{def:lvo}) holds, and $g$ is nonnegative and bounded.} Additional conditions on $\varphi$ will be imposed when needed. 

\subsection{Excursions in general radial-type Green's functions}\label{sec:level}
Our goal in the remainder of Section~\ref{sec:level} is to prove Theorem~\ref{thm:excursion}. The proof of \eqref{formula:excursion} is central. It will extend a method in \cite[pp.120--121]{Bertoin}, which shows how the excursion measure of $\{X_t\}$ at level $b$ can represent the $\E_b$-expectation of $\e^{A_f(t)}$ \emph{after the random time change} by the inverse local time at level $b$. In particular, we will use excursions at \emph{all} levels $b$, which contrasts with the proof of \eqref{Kac} in \cite{JPY} that uses excursions at the level $b=0$ and represents the Green's function in \eqref{Kac} accordingly. Throughout Section~\ref{sec:level}, we write $\P_a$ and $\E_a$ for $\P^X_a$ and $\E^X_a$, respectively. 

More specifically, the proof of  \eqref{formula:excursion} 
relies on the inverse local times and excursion processes at all levels $b\in E$. To state the basic properties of these tools, we write $\mathscr E_b$ for the set of $\epsilon\in D(\R_+,E)$ such that $\epsilon(t)\neq b$ for all $b\in (0,\zeta)$ and $\epsilon(t)=b$ for all $t\geq \zeta$, where $\zeta=\zeta(\epsilon)=\inf\{t\geq 0;\epsilon(t)=b\}$. Then we recall the following properties:
\begin{itemize}
\item Recall that under $\P_a$ for any $a\in E$, the inverse local time at level $b$ $\tau^b_\ell$ is defined as in \eqref{def:invLT}. Under $\P_b$, $\{\tau^b_\ell;\ell\geq 0\}$ is a subordinator killed at the independent exponential random variable $L^b_\infty$ with mean $\E_b[L^b_\infty]=\N_b(\zeta=\infty)^{-1}$
such that 
\begin{align}\label{Phib:Lap}
-\log\E_b[\e^{-\nu\tau^b_1}]= \Phi_b(\nu)\,\defeq\,\mathtt d(b)\nu+\N_b(1-\e^{-\nu\zeta}),\quad \forall\; \nu\in (0,\infty),
\end{align}
where, for fixed $\partial\notin D(\R_+,E)$, $\N_b$ is the excursion measure on $\mathscr E_b\cup \{\partial\}$ satisfying $\N_b(\{\partial\})=0$ and $\N_b(\zeta>\delta)<\infty$ for all $\delta>0$ \cite[Theorem~8 on p.114]{Bertoin}. Note that $\mathtt d(b)$ in \eqref{Phib:Lap} is also the unique constant satisfying 
\begin{gather}
\textstyle \int_0^t\1_{\{b\}}(X_s)\d s =\mathtt d(b)L^b_t\quad\forall\;t\geq 0\quad\mbox{$\P_b$-a.s.}\label{def:drift}
\end{gather}  
\cite[Corollary~6 on p.112]{Bertoin}.

\item The excursion process $\{e^b_\ell;0<\ell<\infty\}$ at level $b$ is defined under $\P_b$ as an $\ms E_b\cup \{\partial\}$-valued point process such that
\[
e^b_\ell=\{e^b_{\ell}(t);t\geq 0\}\,\defeq 
\begin{cases}
\big\{X_{\tau^b_{\ell-}+t\wedge (\tau^b_\ell-\tau^b_{\ell-})};t\geq 0\big\},& \mbox{if }\tau^b_\ell-\tau^b_{\ell-}>0,\\
 \partial, &\mbox{ otherwise}.
\end{cases}
\]
The law can be characterized according to the following dichotomy: 
\begin{itemize}
\item If $b$ is recurrent (i.e. $\{t;X_t=b\}$ is a.s. unbounded), then $\N_b(\zeta=\infty)=0$ and
$\{e^b_\ell\}$ is a Poisson point process with characteristic measure $\N_b$.  
\item If $b$ is transient (i.e. $\{t;X_t=b\}$ is a.s. bounded), then $\N_b(\zeta=\infty)\in (0,\infty)$ and a chronological concatenation of a sequence of i.i.d. copies of $\{e^b_{\ell\wedge L^b_\infty}\}$ defines a Poisson point process with characteristic measure $\N_b$.
\end{itemize}
See \cite[Theorem~10 on p.118]{Bertoin}.
\end{itemize}
For the following proofs, we recall the notations in \eqref{def:Uf}, Assumption~\ref{ass:X} and \eqref{def:Pf}.\medskip 

\begin{proof}[Proof of Theorem~\ref{thm:excursion}~(1$\cc$)]
The key equation for the proof is given by
\begin{align}
&\E_a\left[\int_0^\infty \e^{-\nu t}\e^{A_f(t)}g(X_t)\d t\right]\notag\\
&\quad =\int_EQ^f_{\nu,b}(a)\left(\int_0^{\infty}   \E_b[ \e^{-\nu\tau^b_\ell+ A_f(\tau^b_\ell)};\ell<L^b_\infty]\d\ell\right) g(b)\mathbf m(\d b).\label{excursion:id}
\end{align}
To obtain \eqref{excursion:id}, first, we compute
\begin{align*}
\int_0^\infty \e^{-\nu t}\e^{A_f(t)}g(X_t)\d t
&=\int_E\int_0^\infty \e^{-\nu t+A_f(t)}\d L^b_tg(b)\mathbf m(\d b)\\
&=\int_E\int_0^{L^b_\infty} \e^{-\nu \tau^b_\ell+A_f(\tau^b_\ell)}\d\ell g(b)\mathbf m(\d b),
\end{align*}
where the first equality follows from the monotone class theorem and Assumption~\ref{ass:X} (d), and the second follows by changing variables~\cite[Proposition~4.9 on p.8]{RY}.
To take the expectation of the last term, note that we have
the identity 
$\tau^b_\ell=T_b+\tau^b_\ell\circ\theta_{T_{b}}$ on $\{T_b<\infty\}$ ($\theta_t$ is the shift operator). Hence, we get \eqref{excursion:id} upon taking expectations of both sides of the last equality and using the strong Markov property of $\{X_t\}$ at time $T_b$ on $\{T_b<\infty\}$.
To get \eqref{formula:excursion} from \eqref{excursion:id}, it remains to
 show that, with $\Phi_b$ from \eqref{Phib:Lap}, 
\begin{align}\label{Psi:exp}
\E_b[ \e^{-\nu\tau^b_\ell+A_f(\tau^b_\ell)};\ell<L^b_\infty]=\e^{-\ell \Phi_b(\nu)[1-U_\nu\{fQ^f_{\nu,b}\}(b)]},\quad \ell\geq 0.
\end{align} 
This reduction follows since \eqref{oct} and \eqref{Phib:Lap} imply $U_\nu(b,b)=\Phi_b(\nu)^{-1}$; the strict positivity of $U_\nu(b,b)$ is shown in the proof of (2$\cc$) below. Also, when $1-U_\nu\{fQ^f_{\nu,b}\}(b)\leq 0$, the Lebesgue integral of the right-hand side of \eqref{Psi:exp} over $\ell\in [0,\infty)$ diverges, which is consistent with the convention mentioned below \eqref{formula:excursion}.

To obtain \eqref{Psi:exp}, we first show that
\begin{align}\label{Psi:exp1}
\E_b\big[ \e^{-\nu\tau^b_\ell+A_f(\tau^b_\ell)};\ell<L^b_\infty\big]=\e^{-\ell [\nu-f(b)]\mathtt d(b)-\ell \mathbf N_b(1-\e^{-\nu \zeta+A_f(\zeta)})},\quad \ell\geq 0,
\end{align}
with the convention that $\e^{-\nu \zeta+A_f(\zeta)}\,\defeq\, 0$ if $\zeta=\infty$. Note that
\begin{align}\label{Nb:dec}
\mathbf N_b(1-\e^{-\nu \zeta+A_f(\zeta)})= \mathbf N_b((1-\e^{-\nu \zeta+A_f(\zeta)})\1_{\{\zeta<\infty\}})+\mathbf N_b(\zeta=\infty),
\end{align}
and $\mathbf N_b(1-\e^{-\nu \zeta+A_f(\zeta)})$ must be well-defined as an $[-\infty,\infty)$-valued integral. 

Now, to get \eqref{Psi:exp1},
by monotone convergence, we can assume that $f$ is bounded above. This assumption allows, for all $0<L<R<\infty$, 
\begin{align}\label{Nb:finite}
\N_b(|1-\e^{-[\nu\zeta -A_f(\zeta)]\1_{\{L<\zeta\leq R\}}}|)=\N_b(|1-\e^{-\nu\zeta +A_f(\zeta)}|\1_{\{L<\zeta\leq R\}})<\infty,
\end{align}
since $\N_b(\zeta>\delta)<\infty$ for all $\delta>0$. Hence, 
we can evaluate the expectation in \eqref{Psi:exp1} as follows. 
Rewrite $-\nu \tau^b_\ell+ A_f(\tau^b_\ell)$ first: for $\ell<L^b_\infty$ and any partition $\{S_n\}_{n=0}^\infty$ of $(0,\infty)$ by intervals such that $ \overline{S}_n\subset (0,\infty)$, 
\begin{align}
-\nu \tau^b_\ell+ A_f(\tau^b_\ell)
&=\int_0^{\tau^b_\ell}-[\nu-f(b)]\1_{\{b\}}(X_s) \d s+\int_0^{\tau^b_\ell}-[\nu-f(X_s)]\1_{\{b\}^\complement}(X_s)\d s\notag\\
&=-[\nu-f(b)]\ell\mathtt d(b)+\sum_{n=0}^\infty\sum_{r:0< r\leq \ell}-[\nu \zeta(e^b_r)-A_f(e^b_r)]\1_{\{\zeta(e^b_r)\in S_n\}}.\label{excursion:dec}
\end{align}
Here, the first term in the last equality follows from \eqref{def:drift}, and the second term uses the definition of the excursion process $\{e^b_\ell\}$ and the assumed properties of $\{S_n\}$. 
Recall that Poisson random measures over disjoint sets are independent. Hence, by \eqref{Nb:finite} and \eqref{excursion:dec}, the exponential formula for Poisson point processes \cite[p.8]{Bertoin} applies and gives an equivalent form of \eqref{Psi:exp1} [recall the property of $\mathbf N_b(1-\e^{-\nu \zeta+A_f(\zeta)})$ mentioned right below \eqref{Nb:dec}], and hence, the justification of \eqref{Psi:exp1}:
\[
\E_b\big[ \e^{-\nu\tau^b_\ell+A_f(\tau^b_\ell)};\ell<L^b_\infty\big]=\e^{-\ell [\nu-f(b)]\mathtt d(b)}\Biggl(\prod_{n=0}^\infty \e^{-\ell \N_b(1-\e^{-[\nu\zeta -A_f(\zeta)]\1_{\{\zeta\in S_n\}}})}\Biggr)\e^{-\ell \N_b(\zeta=\infty)}.
\]

To show the equivalence between \eqref{Psi:exp1} and \eqref{Psi:exp}, we resume the use of general locally bounded $f$ and write
\begin{align}
&\N_b(1-\e^{-\nu\zeta+A_f(\zeta)})\notag\\
&\quad =\N_b(1-\e^{-\nu \zeta})+
\N_b(\e^{-\nu \zeta}-\e^{-\nu \zeta+A_f(\zeta)})\notag\\
&\quad =[\Phi_b(\nu)-\mathtt d(b)\nu]-
\N_b\left(\e^{-\nu \zeta}\int_0^\zeta  f(\epsilon_s) \e^{\int_s^\zeta f(\epsilon_r)\d r}\d s\1_{\{\zeta<\infty\}}\right)\label{Psi:exp2}
\end{align}
by using \eqref{Phib:Lap} and the fundamental theorem of calculus for $s\mapsto\e^{\int_s^\zeta f(\epsilon_r)\d r}$. Note that since $\mathbf N_b(1-\e^{-\nu \zeta+A_f(\zeta)})$ must be $[-\infty,\infty)$-valued, as explained below \eqref{Nb:dec}, we have
\begin{align}\label{Nbf-}
\N_b\left(\e^{-\nu \zeta}\int_0^\zeta  f^-(\epsilon_s) \e^{\int_s^\zeta f(\epsilon_r)\d r}\d s\1_{\{\zeta<\infty\}}\right)<\infty, 
\end{align}
where $f^-$ denotes the negative part of $f$, so the last term in \eqref{Psi:exp2} is well-defined and $(-\infty,\infty]$-valued. 
Moreover, this term can be written as
\begin{align}
&\N_b\left(\e^{-\nu \zeta}\int_0^\zeta  f(\epsilon_s) \e^{\int_s^\zeta f(\epsilon_r)\d r}\d s\1_{\{\zeta<\infty\}}\right)\notag\\
&\quad =\N_b\left(\int_0^\zeta \e^{-\nu s}f(\epsilon_s)\E_{\epsilon_s}[\e^{-\nu T_b+A_f(T_b)};T_b<\infty]\d s\right)\notag\\
&\quad =\Phi_b(\nu)\E_b\left[\int_0^\infty  \e^{-\nu s} f(X_s)\E_{X_s}[\e^{-\nu T_b+A_f(T_b)};T_b<\infty]\d s\right]\notag\\
&\quad\quad  -\Phi_b(\nu)\E_b\left[\int_0^\infty \e^{-\nu s}f(b)\1_{\{b\}}(X_s)\d s\right]\notag\\
&\quad =\Phi_b(\nu)U_\nu\{fQ^f_{\nu,b}\}(b)-f(b)\mathtt d(b).\label{Psi:exp3}
\end{align}
Here, the first equality follows from the Markov property of $\{\epsilon_t\}$ at $s$ under the excursion measure \cite[(3.28) Theorem on pp.102--103]{Blumenthal}; the second equality can be deduced from the compensation formula  for Poisson points processes as in \cite[p.120]{Bertoin}; \eqref{Psi:exp3} holds by the following computation where we apply \eqref{def:drift} and \eqref{Phib:Lap} in that order:
\begin{align*}
\E_b\left[\int_0^\infty \e^{-\nu s}\1_{\{b\}}(X_s)\d s\right]
&=\E_b\left[\int_0^\infty \e^{-\nu s}\mathtt d(b)\d L^b_s\right]=\frac{\mathtt d(b)}{\Phi_b(\nu)}.
\end{align*}
Now, the equivalence between \eqref{Psi:exp} and \eqref{Psi:exp2} is immediate upon applying \eqref{Psi:exp2} and \eqref{Psi:exp3} to \eqref{Psi:exp1} and noting that 
\[
[\nu-f(b)]\mathtt d(b)+[\Phi_b(\nu)-\mathtt d(b)\nu]-\Phi_b(\nu)U_\nu\{fQ^f_{\nu,b}(b)\}+f(b)\mathtt d(b)=\Phi_b(\nu)[1-U_\nu\{fQ^f_{\nu,b}\}(b)].
\]
We have proved \eqref{formula:excursion}. The property that $U_\nu\{fQ^f_{\nu,b}\}(b)$ is well-defined and $(-\infty,\infty]$-valued can be seen upon applying \eqref{Nbf-} to \eqref{Psi:exp3}.
\end{proof}

\begin{proof}[Proof of Theorem~\ref{thm:excursion}~(2$\cc$)]
First, to see $Q_{\nu,b}(a)=U_\nu(a,b)/U_\nu(b,b)$, note that   
$U_\nu(a,b)=Q_{\nu,b}(a)U_\nu(b,b)$ can be deduced from the strong Markov property of $\{X_t\}$ at $T_b$ on $\{T_b<\infty\}$ and the assumed joint continuity of the resolvent densities [c.f. the proof of \eqref{psiab:dec2} below]. Also, the property that $U_\nu(b,b)>0$ for all $b\in E$ follows from the same proof of \cite[Lemma~3.4.6 on p.81]{MR:Markov}, now that Assumption~\ref{ass:X} is imposed. 

Next, to prove \eqref{psiab:dec0}, it is enough to show that
\begin{gather}
Q^f_{\nu,b}(a)
=Q_{\nu,b}(a)+\E_a\left[\int_0^{T_b}\e^{-\nu s}f(X_s)Q^f_{\nu,b}(X_s)\d s\right],\label{psiab:dec1}\\
\E_a\left[\int_0^{T_b}\e^{-\nu s}f(X_s)Q^f_{\nu,b}(X_s)\d s\right]+Q_{\nu,b}(a)U_\nu\{fQ^f_{\nu,b}\}(b)=U_\nu\{fQ^f_{\nu,b}\}(a).
\label{psiab:dec2}
\end{gather}
This is so since \eqref{psiab:dec0} holds trivially if $U_\nu\{fQ^f_{\nu,b}\}(b)=\infty$. To see this property, note that the following proof shows that the last term of \eqref{psiab:dec1} must be $(-\infty,\infty]$-valued, and so, 
 by \eqref{psiab:dec2}, $U_\nu\{fQ^f_{\nu,b}\}(b)=\infty$ implies $U_\nu\{fQ^f_{\nu,b}\}(a)=\infty$.
 
Now, to obtain \eqref{psiab:dec1}, write
\begin{gather}\label{FOTC}
\e^{A_{f}(t)}=1+\int_0^{t}f(X_s)\e^{A_{f}(t)-A_{f}(s)}\d s.
\end{gather}
By using \eqref{FOTC} with $t=T_b$, 
multiplying both sides by $\e^{-\nu T_b}$, and taking expectations,
\begin{align}
Q^f_{\nu,b}(a)&=Q_{\nu,b}(a)+\int_0^\infty \E_a[\e^{-\nu s}f(X_s)\e^{-\nu (T_b-s)+A_{f}(T_b)-A_{f}(s)};s<T_b<\infty]\d s.
\end{align}
We obtain \eqref{psiab:dec1} by applying the strong Markov property at time $s$ to the last term. 
Note that the proof of \eqref{psiab:dec1} implies 
$\E_a[\int_0^{T_b}\e^{-\nu s}f^-(X_s)Q^f_{\nu,b}(X_s)\d s]<\infty$,
so that the last term in \eqref{psiab:dec1} is well-defined and $(-\infty,\infty]$-valued. 

Regarding \eqref{psiab:dec2},
we expand the last term in \eqref{psiab:dec1} by writing $\int_0^\infty=\int_0^{T_b}+\int_{T_b}^\infty$ and the strong Markov property of $\{X_t\}$ at $T_b$ on $\{T_b<\infty\}$, so that 
\begin{align}
\begin{split}
U_\nu\{fQ^f_{\nu,b}\}(a)&=
\E_a\left[\int_0^{T_b}\e^{-\nu s}f(X_s)Q^f_{\nu,b}(X_s)\d s\right]\\
&\quad +\E_a[\e^{-\nu T_b};T_b<\infty]U_\nu\{fQ^f_{\nu,b}\}(b),\label{resolvent:rec}
\end{split}
\end{align}
which is \eqref{psiab:dec2} by using the notations $Q_{\nu,b}$ and $U_\nu$. The proof is complete.
\end{proof}

\subsection{Asymptotic expansions under BES$^{ 2}$}\label{sec:3asymp}
In this subsection, we prove Proposition~\ref{prop:5}, which provides three asymptotic expansions; see \eqref{asymp:U}--\eqref{Qf:exp0final}. Together, these expansions not only imply but also refine the asymptotic expansion \eqref{lim:aux} in Theorem~\ref{thm:>0}, by establishing the stronger property of uniform convergence as defined below. We will use this stronger form of convergence in Section~\ref{sec:radial}, alongside $\little{o}$- and $\mathcal O$-terms.

\begin{defi}\label{def:unif}
We say that $h_\vep(t)$ converges to $h(t)$ uniformly in $t \in \Gamma_\vep$ as $\vep \to 0$ if $\sup_{t} |(h_\vep - h)(t) \1_{\Gamma_\vep}(t)| \to 0$. \qed 
\end{defi}

In the sequel, we fix a choice of $M_\varphi$ satisfying the following condition:
\begin{align}
 \label{def:M}
\begin{split}
\mbox{$M_\varphi\in(1,\infty)\mbox{ and }\supp(\varphi)\subseteq \{z\in \Bbb C;|z|\leq M_\varphi\}$}.
\end{split}
\end{align}
Additionally, recall $\oovarphi\uvep$, $R^f_{\nu,b}(a)$ and $\lv$ defined in \eqref{def:phiexp}, \eqref{def:TR} and \eqref{def:lvo}. 

\begin{prop}\label{prop:5}
Fix $q\in (0,\infty)$, and define
\begin{align}
\Gamma_{\geq }(\vep,M_\varphi)&\,\defeq\,\{(a,b)\in\R\times \R;M_\varphi\geq a\geq b\geq M_{\vep}\},\label{Ggeq}\\
\Gamma_{\leq }(M_\varphi)&\,\defeq\, \{(a,b)\in \R\times \R;0<a\leq b\leq M_\varphi\},\label{Gleq}\\
M_{\vep}&\,\defeq \,
(M_\varphi/2)\wedge [\log (\log \vep^{-1})]^{-1}.
\label{def:Mvep}
\end{align}
Then uniformly in $(a,b)\in \Gamma_{\geq}(\vep,M_\varphi)\cup \Gamma_{\leq}(M_\varphi)$ as $\vep\to 0$, 
\begin{align}
\begin{split}
V_{q\vep^2}(a,b)&= \frac{b}{\pi}\int_{-\pi}^\pi\biggl(\log \frac{2^{1/2}}{q^{1/2}\vep|a-b\e^{\i \theta}|}-\gamma_{\sf EM} \biggr)\d \theta\\
&\quad\;+ b\cdot \little{o}_{q,M_\varphi}\left(\frac{1}{\log \vep^{-1}}\right),\label{asymp:U}
\end{split}\\
\begin{split}
V_{q\vep^2}\{\lv\oovarphi\uvep R^{\lv\oovarphi\uvep}_{q\vep^2,b}\}(a)
& =\mu+\frac{A_{\mu,\lambda,q}(a,b)}{\log \vep^{-1}}
+\little{o}_{\varphi,\chi,q,M_\varphi}\left(\frac{1}{\log \vep^{-1}}\right),
\label{eq:finalI:rad}
\end{split}\\
\begin{split}
R_{q\vep^2,b}^{\lv\oovarphi\uvep}(a)&=1-\frac{(1-\mu)\log^-(b/a)+A_{\mu,\lambda,q}(b,b)- A_{\mu,\lambda,q}(a,b)}{\log \vep^{-1}}\\
&\quad\;+\little{o}_{\varphi,\chi,q,M_\varphi}\left(\frac{1}{\log \vep^{-1}}\right),\label{Qf:exp0final}
\end{split}
\end{align}
where $A_{\mu,\lambda,q}(a,b)$ is defined in \eqref{def:Azb}, and $\chi\,\defeq\,(\mu,\lambda)$.
\end{prop}

\begin{rmk}
The formulation of Definition~\ref{def:unif} and the selection of $M_\vep$ arise from limitations in some a-priori bounds for $R_{q\vep^2,b}^{\lv\oovarphi\uvep}(a)$, which are clarified in Lemma~\ref{lem:3asymp}.\qed 
\end{rmk}

\begin{proof}[Proof of (\ref{lim:aux}) of Theorem~\ref{thm:>0}]
For fixed $a,b,\mu,\ell\in (0,\infty)$ and $\lambda\in \R$, 
we have the following particular consequence of \eqref{kernel:expexp}, choosing $f\equiv \lv \oovarphi\uvep$ and $\nu\equiv q\vep^2$:
\begin{align*}
\E^{\rho}_{a}[\e^{-q\vep^2\tau^b_\ell+A_{\lv \oovarphi\uvep}(\tau^b_\ell)}]
&=R^{\lv \oovarphi\uvep}_{q\vep^2,b}(a)\exp\Biggl\{-\ell\Biggl(\frac{1-V_{q\vep^2}\{\lv \oovarphi\uvep V^{\lv \oovarphi\uvep}_{q\vep^2,b}\}(b)}{V_{q\vep^2}(b,b)}\Biggr)\Biggr\}.
\end{align*}
The required identity follows upon applying \eqref{asymp:U}--\eqref{Qf:exp0final} to the right-hand side.
\end{proof}

\begin{proof}[Proof of (\ref{asymp:U}) of Proposition~\ref{prop:5}]
Take $\nu=q\vep^2$ in \eqref{UG:polar} and apply \eqref{asymp:G}. The $\little{o}$-term in \eqref{asymp:U} holds since $(a,b)\in \Gamma_{\geq}(\vep,M_\varphi)\cup \Gamma_{\leq}(M_\varphi)$.
\end{proof}

To prove \eqref{eq:finalI:rad} and \eqref{Qf:exp0final}, we estimate $R_{q\vep^2,b}^{\lv\oovarphi\uvep}(a)$ using the following recursion, derived by 
applying Theorem~\ref{thm:>0} (1$\cc$) with the choice of  $U_\nu\equiv V_\nu$, $Q^f_{\nu,b}(a)\equiv R^f_{\nu,b}(a)$, $Q_{\nu,b}\equiv R_{\nu,b}$, $f\equiv \lv \oovarphi\uvep$ and $\nu\equiv q\vep^2$ for \eqref{psiab:dec}:
\begin{gather}\label{psiab:decr}
\begin{split}
R^{\lv \oovarphi\uvep}_{q\vep^2,b}(a)=R^0_{q\vep^2,b}(a)\times [1-V_{q\vep^2}\{\lv \oovarphi\uvep R^{\lv \oovarphi\uvep}_{q\vep^2,b}\}(b)]+V_{q\vep^2}\{\lv \oovarphi\uvep R^{\lv \oovarphi\uvep}_{q\vep^2,b}\}(a)\\
\mbox{under the condition of $V_{q\vep^2}\{\lv \oovarphi\uvep R^{\lv \oovarphi\uvep}_{q\vep^2,b}\}(b)<\infty$. }
\end{split}
\end{gather}
Here, for all $\nu>0$,  we have the following formula: 
\begin{align}\label{BES:time}
R^0_{\nu,b}(a)=
\begin{cases}
\displaystyle K_0(a\sqrt{2\nu})/K_0(b\sqrt{2\nu}),&a\geq b>0,\\
\displaystyle I_0(a\sqrt{2\nu})/I_0(b\sqrt{2\nu}),&0<a\leq b,
\end{cases}
\end{align}
where $K_0(\cdot)$ is the Macdonald function of order zero, and $I_0(\cdot)$ is the modified Bessel function of the first kind of order zero; see \cite{HM} for \eqref{BES:time}.

Lemma~\ref{lem:3asymp} sets the stage for applying the recursion in \eqref{psiab:decr}. It analyzes the coefficients involved and provides preliminary bounds and a first-order asymptotic formula for $ R_{q\vep^2,b}^{\lv \oovarphi\uvep}(a)$. The approach for establishing these bounds relies on the following simple estimates, which follow directly from the definition of $R^f_{\nu,b}(a)$ in \eqref{def:TR} and the selection of $M_\varphi$ in \eqref{def:M}: 
\begin{align}\label{eq:4-0}
R^{0}_{q\vep^2+\|(\lv \oovarphi\uvep)^-\|_\infty,b}(a)
\leq
 R_{q\vep^2,b}^{\lv \oovarphi\uvep}(a)\leq R_{0,b}^{\|(\lv \oovarphi\uvep)^+\|_\infty\1_{[0,M_\varphi]}(|\cdot|)}(a),
\end{align}
where $f^-$ and $f^+$ denote the negative and positive parts of a real-valued function $f$. In the sequel, $\beta$ denotes a one-dimensional standard Brownian motion, and $T_b(\beta)$ is defined as in \eqref{def:TbX}

\begin{lem}\label{lem:3asymp}
\noindent {\rm (1$\cc$)} For $\nu>0$,
\begin{align}\label{BES:time-ext}
\E^\rho_a[\e^{\nu T_b}]&=\frac{J_0(a\sqrt{2\nu })}{J_0(b\sqrt{2\nu})},\quad\, 0<a\leq b<\frac{j_{0,1}}{\sqrt{2\nu}},\\
 \E^\beta_a[\e^{\nu \int_0^{T_b(\beta)}\1_{(-\infty,0]}(\beta_s)\d s}]&=\frac{\cos( a\sqrt{2\nu})}{\cos(b\sqrt{2\nu})},
\;\;\;\displaystyle  0\geq a\geq b>-\frac{\pi}{2\sqrt{2\nu}},\label{formula:ub}
\end{align}
where $J_0(\cdot)$, satisfying $J_0(0)=1$, is the Bessel function of the first kind of order zero, and $j_{0,1}$ is the smallest positive zero of $J_0(\cdot)$.
\medskip  

\noindent {\rm (2$\cc$)} Fix $M\in(0,\infty)$. As $\nu\to 0+$, the following expansion holds uniformly over $0<a,b\leq M$, using the negative part $\log^-(x)\,\defeq -\log (x\wedge 1)$ of $\log x$:
\begin{align}
\begin{split}
&R^0_{\nu,b}(a)=1-\frac{\log^- (b/a)}{\log \frac{1}{\sqrt{2\nu}}}\\
&\quad\quad+\mathcal O_M\biggl((a\vee  b)^2\cdot 2\nu \log \frac{1}{(a\vee  b)\sqrt{2\nu}}+
\frac{(|\log b|\1_{a\geq b}+1)(\log^-(b/a)+1)}{\log^2 \sqrt{2\nu}}\biggr).\label{ET:exp}
\end{split}
\end{align}

\noindent {\rm (3$\cc$)} For all  $a,b>0$, 
\begin{align}\label{R:uppbdd}
R_{q\vep^2,b}^{\lv\oovarphi\uvep}(a)\leq \E_{\log a}^{\beta}\Big[\e^{\|(\lv \oovarphi\uvep)^+\|_\infty M_\varphi^2\int_0^{T_{\log b}(\beta)}\1_{(-\infty,\log M_\varphi]}(\beta_s)\d s}\Big].
\end{align}

\noindent {\rm (4$\cc$)} Fix $q\in (0,\infty)$. Then uniformly in $(a,b)\in \Gamma_{\geq}(\vep,M_\varphi)\cup \Gamma_{\leq}(M_\varphi)$ as $\vep \to 0$, 
\begin{align}
\begin{split}\label{Qf:exp0}
R_{q\vep^2,b}^{\lv\oovarphi\uvep}(a)=1+\mathcal O_{\varphi,\chi,q,M_\varphi}\left(\frac{1}{\log^{1/2}\vep^{-1}}\right)\quad (\chi\,\defeq\,(\mu,\lambda)).
\end{split}
\end{align}
\end{lem}

\begin{rmk}\label{rmk:nu<0}
For the extension of \eqref{formula:ub} to $\nu<0$, see \cite{JY} and \cite[Example 2.1, p. 437 with $\mu=0$]{Knight}. \qed
\end{rmk}

\begin{proof}[Proof of Lemma~\ref{lem:3asymp}]
(1$\cc$) To derive \eqref{BES:time-ext}, consider the case $0 < a \leq b$ in \eqref{BES:time}. By comparing the series expansions of $J_0(\cdot)$ and $I_0(\cdot)$~\cite[(5.3.2) p.102; (5.7.1) p.108]{Lebedev}, we find $J_0(z) = I_0(\i z)$. By analytically continuing the second formula in \eqref{BES:time} with respect to the variable $\nu$, we obtain \eqref{BES:time-ext}. In more detail, this step is justified by a general theorem on analytic characteristic functions \cite[Theorem~2.3.5, p.33]{Bryc:Normal}. 

The validity of \eqref{formula:ub} can also be established via analytic continuation, extending the formula for $\nu<0$ found in the references cited in Remark~\ref{rmk:nu<0}. Alternatively, the result can be confirmed using stochastic calculus, as in \cite{JY}, which we outline below for completeness.

For fixed $\nu>0$ and $-\pi/(2\sqrt{2\nu}) < b < 0$, observe that the following problem has a unique continuous solution given by $h_b(a)\,\defeq \cos((a\wedge 0)\sqrt{2\nu})/\cos(b\sqrt{2\nu})$ for $b \leq a < \infty$:
\begin{align}\label{ODE:FK}
\begin{cases}
\displaystyle \frac{1}{2}u''(a)+\nu\1_{(-\infty,0]}(a) u(a)=0,\quad a\in (b,0)\cup (0,\infty),\\
\displaystyle \lim_{a\nearrow 0}u'(a)=\lim_{a\searrow 0}u'(a),\quad 
u(b)=1.
\end{cases}
\end{align}
The proof of \eqref{formula:ub} then proceeds through the following steps:
\begin{itemize}
\item [(i)] Expand $H^\nu_t\, \defeq \,h_b(\beta_t) \exp\{\nu \int_0^t \1_{(-\infty,0]}(\beta_s)\, \mathrm{d}s\}$ under $\P^\beta_a$ into a semimartingale by first applying the It\^{o}--Tanaka formula \cite[6.24, p.215]{KS:BM} to $h_b(\beta_t)$, followed by the product rule for stochastic integrals. In particular, the first and second equalities in \eqref{ODE:FK} ensure that the finite-variation part of $H^\nu_{t\wedge T_b(\beta)}$ vanishes. 
\item [(ii)] Take expectations of $H^\nu_{t \wedge T_b(\beta) \wedge T_n(\beta)}$, which are martingales by (i) for all fixed $n\in \Bbb N$. This yields $\E^\beta_a[H^\nu_{t \wedge T_b(\beta) \wedge T_n(\beta)}]=H^\nu_0$. 
\item [(iii)] By applying Fatou's lemma to $\E^{\beta}_a[H^{\nu_0}_{t \wedge T_b(\beta) \wedge T_n(\beta)}] = H^{\nu_0}_0$ for $\nu_0 > \nu$ as $n \to \infty$, we obtain the inequality $\E^{\beta}_a\big[\exp\{\nu_0 \int_0^{T_b(\beta)} \1_{(-\infty,0]}(\beta_s)\, \mathrm{d}s\}\big] \leq H_0^{\nu_0}$, since $h_b(a) \geq 1$ for all $b \leq a < \infty$.  Consequently, the family $\{H^\nu_{t \wedge T_b(\beta) \wedge T_n(\beta)}\}_{t \geq 0, n \in \mathbb{N}}$ is uniformly integrable. This property enables us to take the limits $t \to \infty$ and $n \to \infty$ inside the expectation $\E^\beta_a[H^\nu_{t \wedge T_b(\beta) \wedge T_n(\beta)}] = H^{\nu_0}_0$ using a standard limit theorem. This establishes identity \eqref{formula:ub} through the final equality in \eqref{ODE:FK}. 
\end{itemize}

\noindent (2$\cc$) This proof derives \eqref{ET:exp} from \eqref{BES:time}, using the following general rule:
 \begin{align}\label{tau:rule1}
\frac{1+\vep_a}{1+\vep_b}=1+(\vep_a-\vep_b)-\frac{\vep_b(\vep_a-\vep_b)}{(1+\vep_b)}\quad\mbox{whenever $\vep_b>-1$.}
\end{align}

First, by using \eqref{ET:exp} when $M\geq a\geq b>0$, 
\[
\vep_{\alpha}\equiv \frac{K_0(\alpha\sqrt{2\nu})}{\log \frac{1}{\sqrt{2\nu}}}-1,\quad \alpha\in \{a,b\}\Longrightarrow\;
R^0_{\nu,b}(a)=\frac{K_0(a\sqrt{2\nu})}{K_0(b\sqrt{2\nu})}=\frac{1+\vep_a}{1+\vep_b}.
\] 
Whenever $\nu<1/2$, $\vep_b>-1$ so that the rule in \eqref{tau:rule1} applies. To justify the $\mathcal{O}_M$-term in \eqref{ET:exp} when $M \geq a \geq b > 0$, we use the following estimate,  obtained from \eqref{asymp:K0}: 
\begin{align*}
\vep_\alpha
&=\frac{\log \frac{2}{\alpha}-\EM+\mathcal O_M\big(\alpha^2\cdot 2\nu \log \frac{1}{\alpha\sqrt{2\nu}}\big)}{\log\frac{1}{ \sqrt{2\nu}}},
\end{align*}
where the $\mathcal O_M$-term holds uniformly in $M \geq \alpha > 0$ as $\nu \to 0+$.
Since $x^2\log(1/x)$ is increasing for $0 < x \leq \e^{-1/2}$, the first term in the argument of $\mathcal{O}_M(\cdot)$ in \eqref{ET:exp} bounds the following term, which is a remainder in $\vep_a - \vep_b$:
\[
\frac{\mathcal O_M\big(a^2\cdot 2\nu \log \frac{1}{a\sqrt{2\nu}}\big)}{\log\frac{1}{ \sqrt{2\nu}}}-\frac{\mathcal O_M\big(b^2\cdot 2\nu \log \frac{1}{b\sqrt{2\nu}}\big)}{\log\frac{1}{ \sqrt{2\nu}}}.
\]
Additionally, the second term in the argument of $\mathcal O_M(\cdot)$ of \eqref{ET:exp} bounds $\vep_b(\vep_a-\vep_b)$. We have proved \eqref{ET:exp} when $M\geq a\geq b>0$.

For $0 < a \leq b \leq M$, we apply the corresponding formula from \eqref{BES:time}.
Recall the asymptotic expansion $I_0(a) = 1 + \mathcal{O}(a^2)$ as $a \to 0$, and note that $I_0(a)$ is a strictly positive, increasing function for $a \geq 0$. Both properties follow from the integral representation of $I_0(\cdot)$ in~\cite[(5.10.22), p.119]{Lebedev}.  Hence, the following inequalities and estimate hold, where the $\mathcal O_M$-term holds uniformly in $0< b\leq M$:
\begin{align}\label{I0ratio}
1\geq \frac{I_0(a\sqrt{2\nu})}{I_0(b\sqrt{2\nu})}\geq 
\frac{1}{I_0(b\sqrt{2\nu})}=\frac{1}{1+\mathcal O_M(b^2\nu)},\quad \nu\to 0+.
\end{align}
The expansion in \eqref{ET:exp} for $M \geq a \geq b > 0$ follows by applying \eqref{tau:rule1} to the rightmost side of \eqref{I0ratio} and observing that $b^2\nu$ is bounded by the first term in the argument of $\mathcal{O}_M(\cdot)$ in \eqref{ET:exp}. This completes the proof of (2$\cc$).\medskip 

\noindent (3$\cc$) Given $\rho_0 = a > 0$, the SDE for the two-dimensional Bessel process provides the representation $\rho_t = \exp\{\beta_{\int_0^t \d r / \rho_r^2}\}$, where $\{\beta_t\}$ is a one-dimensional Brownian motion starting at $\log a$; see \cite[(2.11) Theorem, p.193]{RY} for more details. Thus, \eqref{R:uppbdd} follows by considering the rightmost side of \eqref{eq:4-0} along with the bound established below:
\begin{align*}
A_{\|(\lv \oovarphi\uvep)^+\|_\infty\1_{[0,M_\varphi]}(|\cdot|)}(T_b(\rho))&=\|(\lv \oovarphi\uvep)^+\|_\infty \int_0^{T_b(\rho)} \1_{[0,M_\varphi]}(\exp\{\beta_{\int_0^s \d r/\rho_r^2}\})\d s\\
&=\|(\lv \oovarphi\uvep)^+\|_\infty\int_0^{T_{\log b}(\beta)}\1_{[0,M_\varphi]}(\exp\{\beta_{s'}\})\e^{2\beta_{s'}}\d s' \\
&\leq \|(\lv \oovarphi\uvep)^+\|_\infty M_\varphi^2\int_0^{T_{\log b}(\beta)}\1_{(-\infty,\log M_\varphi]}(\beta_{s'})\d s'.
\end{align*}
Here, in the second equality, we change variables by setting $s' = \int_0^s \d r / \rho_r^2$ and use the identity $\int_0^{T_b(\rho)} \d r / \rho_r^2 = T_{\log b}(\beta)$. 
\medskip 

\noindent (4$\cc$) The proof proceeds by proving that \eqref{Qf:exp0} holds when the equality is replaced by inequalities ``$\geq$'' and ``$\leq$'' , utilizing the bounds from \eqref{eq:4-0}:
\begin{itemize}
\item \emph{Equation~\eqref{Qf:exp0} with ``\,$=$'' replaced by ``\,$\geq $''}: The lower bound in \eqref{eq:4-0} is at least $R^0_{\nu, b}(a)$ for the following choice of $\nu$:
 $\nu=C(\varphi,\chi,q)/\log\vep^{-1}$. This provides  \eqref{Qf:exp0} with ``$=$'' replaced by ``$\geq$'', once we analyze the corresponding $R^0_{\nu,b}(a)$ using the expansion in \eqref{ET:exp}, the selection of $M_\vep$ in \eqref{def:Mvep} and the increasing monotonicity of $x^2\log(1/x)$ for $0 < x \leq \e^{-1/2}$.
\item \emph{Equation~\eqref{Qf:exp0} with ``\,$=$'' replaced by ``\,$\leq$''}: We begin by deriving two additional upper bounds that admit explicit formulas:
\begin{itemize}
\item For $(a,b)\in \Gamma_{\leq}(M_\varphi)$, the upper bound in \eqref{eq:4-0} satisfies: 
\[
R_{0,b}^{\|(\lv \oovarphi\uvep)^+\|_\infty\1_{[0,M_\varphi]}(|\cdot|)}(a)\leq R_{-\|(\lv \oovarphi\uvep)^+\|_\infty,b}^{0}(a).
\]
The right-hand side admits an explicit formula via \eqref{BES:time-ext}, using the choice $\nu\equiv \|(\lv \oovarphi\uvep)^+\|_\infty$. 
\item For $(a, b) \in \Gamma_{\geq}(\vep, M_\varphi)$, we use the upper bound in \eqref{R:uppbdd}. Specifically, observe the following estimate:
 \[
 |\log M_\vep-\log M_\varphi|\cdot 2 \sqrt{2\|(\lv \oovarphi\uvep)^+\|_\infty M_\varphi^2}\xrightarrow[\vep\to 0]{} 0.
 \]
Hence, the right-hand side of \eqref{R:uppbdd} can be evaluated using \eqref{formula:ub}
with $(a,b,\nu)$ replaced by $(\log a - \log M_\varphi, \log b - \log M_\varphi,\|(\lv \oovarphi\uvep)^+\|_\infty M_\varphi^2)$. 
\end{itemize}

Now, we derive \eqref{Qf:exp0} with ``$=$'' replaced by ``$\leq$'' by employing the explicit upper bounds obtained above and four key facts: (i) the second-order Taylor expansions of $J_0(z)$ and $\cos(z)$ as $z \to 0$; (ii) the general rule in \eqref{tau:rule1}; (iii) the definition of $M_\vep$ in \eqref{def:Mvep}; and (iv) the bound $\|(\lv \oovarphi\uvep)^\pm\|_\infty \leq C(\varphi, \chi)/\log \vep^{-1}$. Specifically, the second-order Taylor expansions as $z \to 0$ are $J_0(z) = 1 + J_0''(0)z^2/2! + \mathcal{O}(|z|^4)$ (see the proof of (1$\cc$)), and $\cos(z) = 1 - z^2/2! + \mathcal{O}(|z|^4)$.
\end{itemize}
We have proved \eqref{Qf:exp0}. 
\end{proof}

\begin{proof}[Proof of (\ref{eq:finalI:rad}) of Proposition~\ref{prop:5}]
The proof begins with the following asymptotic expansion, to be established in Step~1: uniformly in $b\in [M_\vep, M_\varphi]$ and $|z|\leq M_\varphi$ as $\vep\to 0$,  
\begin{align}
V_{q\vep^2}\{\lv \oovarphi\uvep R^{\lv \oovarphi\uvep}_{q\vep^2,b}\}(|z|)&=\mu X_\vep(b)+\frac{A^\sharp_{0}(z)}{\log\vep^{-1}}
+\little{o}_{\varphi,\chi,q,M_\varphi}\left(\frac{1}{\log \vep^{-1}}\right),\label{eq:X0}
\end{align}
where
\begin{align}
X_\vep(b)&\,\defeq \int \oophi\uvep (z') R^{\lv \oovarphi\uvep}_{q\vep^2,b}(|z'|)\d z',\\
A^\sharp_{0}(z)&\,\defeq\, \int \biggl(\mu\log \frac{2^{1/2}}{q^{1/2}|z-z''|}-\mu \gamma_{\sf EM}+\lambda\biggr)\overline{\varphi}(z'')\d z''.\label{def:A1}
\end{align}
Note that $A^\sharp_{0}(\cdot)$ is a radially symmetric function. In the remainder of this proof, $\little{o}(\cdot)=\little{o}_{\varphi,\chi,q,M_\varphi}(\cdot)$ and $\mathcal O(\cdot)=\mathcal O_{\varphi,\chi,q,M_\varphi}(\cdot)$

The remainder of the proof elaborates Step~1 and expands the right-hand side of \eqref{eq:X0} further, ultimately establishing \eqref{eq:finalI:rad}. 
This expansion proceeds
with the following steps: Step~2 shows that for all small $\vep$ and all $b \in [M_\vep, M_\varphi]$, 
\begin{align}
\begin{split}\label{Xvep:0}
X_\vep(b)&=\int \overline{\varphi}(z')R^0_{q\vep^2,b}(|z'|)\d z'\times [1-V_{q\vep^2}\{\lv \oovarphi\uvep  R^{\lv \oovarphi\uvep}_{q\vep^2,b}\}(b)]\\
&\quad +\int \overline{\varphi}(z')V_{q\vep^2}\{\lv \oovarphi\uvep  R^{\lv \oovarphi\uvep}_{q\vep^2,b}\}(|z'|)\d z'.
\end{split}
\end{align} 
Steps~3 and 4 then derive asymptotic expansions, uniformly in $b \in [M_\vep, M_\varphi]$ as $\vep \to 0$, for the following two terms from the right-hand side of \eqref{Xvep:0}:
\begin{gather}
\int \overline{\varphi}(z')R^0_{q\vep^2,b}(|z'|)\d z',\quad
\int \overline{\varphi}(z')V_{q\vep^2}\{\lv \oovarphi\uvep  R^{\lv \oovarphi\uvep}_{q\vep^2,b}\}(|z'|)\d z'.\label{Xvep:12term}
\end{gather}
In Step 5, we use \eqref{Xvep:0} to combine \eqref{eq:X0} and the asymptotic expansions of the two terms in \eqref{Xvep:12term}, yielding an asymptotic expansion for $\mu X_\vep(b)$ that holds uniformly in $b \in [M_\vep, M_\varphi]$ as $\vep \to 0$. In Step~6, we substitute this expansion of $\mu X_\vep(b)$ into \eqref{eq:X0}. After some algebraic manipulation, we obtain \eqref{eq:finalI:rad} since $b\in [M_\vep,M_\varphi]$ and $|z|\leq M_\varphi$ if and only if $(|z|,b)\in \Gamma_{\geq }(\vep,M_\varphi)\cup \Gamma_\leq (M_\varphi)$. \smallskip

\begin{rmk}[Asymptotic recursions]\label{rmk:AR}
In Step~4, the asymptotic expansion for the second term in \eqref{Xvep:12term} is proved using \eqref{eq:X0} again. Thus, this proof reflects the style of asymptotic recursions discussed in Section~\ref{sec:=0}. \qed 
\end{rmk}

\noindent {\bf Step~1.} To prove \eqref{eq:X0}, we begin by presenting a preliminary result, expressed through the following asymptotic expansion: uniformly in $b\in [M_\vep,M_\varphi]$ and $z\in \Bbb C$ as $\vep\to 0$,
\begin{align}
&V_{q\vep^2}\{\oophi\uvep R^{\lv \oovarphi\uvep}_{q\vep^2,b}\}(|z|)\notag\\
&\quad =\frac{\log \vep^{-1}}{\pi}\int \oophi\uvep(z')R^{\lv \oovarphi\uvep}_{q\vep^2,b}(|z''|)\d z''\notag\\
&\quad\quad +
\int \oophi\uvep(z'')R^{\lv \oovarphi\uvep}_{q\vep^2,b}(|z''|)\frac{1}{\pi}\biggl(\log\frac{2^{1/2}}{q^{1/2} |z-z''|} -\gamma_{\sf EM}\biggr)\d z'' \notag\\
&\quad\quad +\int \oophi\uvep(z'')R^{\lv \oovarphi\uvep}_{q\vep^2,b}(|z''|)\mathcal O\left(q\vep^2|z-z''|^2\log \frac{1}{q^{1/2}\vep |z-z''|}\right)\d z''.\label{Xvep:10}
\end{align}
To see this, recall that $\oophi\uvep(\cdot)$ and $R^{\lv \oovarphi\uvep}_{q\vep^2,b}(\cdot)$ are radially symmetric, $\oophi\uvep(\cdot)$ has a compact support,
and $G_\nu\{\cdot\}$ is introduced in \eqref{def:Gf}. Since $V_\nu\{h\}(|z|)=G_\nu\{h\}(z)$ for radially symmetric $h$, \eqref{Xvep:10} follows from \eqref{asymp:G} by choosing $\nu\equiv q\vep^2$. 
Moreover, by Lemma~\ref{lem:3asymp} (4$\cc$), each term on the right-hand side of \eqref{Xvep:10} is well-defined whenever $\vep$ is small.

We prove \eqref{eq:X0} now. By the definition \eqref{def:lvo} of $\lv$,
\begin{align}
&V_{q\vep^2}\{\lv \oovarphi\uvep R^{\lv \oovarphi\uvep}_{q\vep^2,b}\}(|z|)\notag\\
&\quad =\frac{\mu\pi}{\log\vep^{-1}}V_{q\vep^2}\{\oophi\uvep R^{\lv \oovarphi\uvep}_{q\vep^2,b}\}(|z|)+\frac{\pi\lambda}{\log^2\vep^{-1}}V_{q\vep^2}\{\oophi\uvep R^{\lv \oovarphi\uvep}_{q\vep^2,b}\}(|z|) \notag\\
&\quad =\mu X_\vep(b)+\frac{\mu}{\log \vep^{-1}}\int \oophi\uvep (z'')R^{\lv \oovarphi\uvep}_{q\vep^2,b}(|z''|)\biggl(\log\frac{2^{1/2}}{q^{1/2} |z-z''|} -\gamma_{\sf EM}\biggr)\d z''\notag\\
&\quad \quad +\frac{\lambda}{\log \vep^{-1}}\int \oophi\uvep (z'')R^{\lv \oovarphi\uvep}_{q\vep^2,b}(|z''|)\d z''+\mc O\left(\frac{1}{\log^2 \vep^{-1}}\right),\label{Xvep:1-1}
\end{align}
where \eqref{Xvep:1-1} uses \eqref{Xvep:10}, and
 the $\mc O$-term in \eqref{Xvep:1-1} follows from Lemma~\ref{lem:3asymp} (4$\cc$).
Applying \eqref{Qf:exp0} to the right-hand side of \eqref{Xvep:1-1} allows a replacement of $\oophi\uvep(z'')R^{\lv \oovarphi\uvep}_{q\vep^2,b}(|z''|)$ by $\overline{\varphi}(z'')$, yielding the following asymptotic expansion that proves \eqref{eq:X0}:
\begin{align}
V_{q\vep^2}\{\lv \oovarphi\uvep R^{\lv \oovarphi\uvep}_{q\vep^2,b}\}(|z|)
&=\mu X_\vep(b)+\frac{\mu}{\log \vep^{-1}}\int \overline{\varphi}(z'')\biggl(\log\frac{2^{1/2}}{q^{1/2} |z-z''|} -\gamma_{\sf EM}\biggr)\d z''\notag\\
&\quad +\frac{\lambda}{\log \vep^{-1}}\int \overline{\varphi} (z'')\d z''+\little{o}\left(\frac{1}{\log \vep^{-1}}\right).
\end{align}

\noindent {\bf Step~2.}
To justify \eqref{Xvep:0} for all sufficiently small $\vep$ and all $b \in [M_\vep, M_\varphi]$, note that the condition in \eqref{psiab:decr} is satisfied by applying \eqref{Qf:exp0} for these values of $\vep$ and $b$. As a result, \eqref{Xvep:0} follows by integrating both sides of the equation in \eqref{psiab:decr} against $ \oophi\uvep(z')\d z'$, provided that $a$ in \eqref{psiab:decr} is replaced by $|z'|$. \medskip 

\noindent {\bf Step~3.}
By \eqref{def:phiexp} and \eqref{ET:exp}, the first term in \eqref{Xvep:12term} satisfies the following asymptotic expansion: uniformly in $b\in [M_\vep,M_\varphi]$ as $\vep\to 0$,
\begin{align}
 \int\overline{\varphi}(z')R^0_{q\vep^2,b}(|z'|)\d z'
&= \int \left[\overline{\varphi}(z')+\frac{\mu\pi \widecheck{\varphi}(z')}{\log \vep^{-1}}\right]\bigg[1-\frac{\log^-(b/|z'|)}{\log \frac{1}{\sqrt{2q\vep^2}}}
\bigg]\d z'\notag\\
&\quad +\mc O\left(\frac{\log^2b+1}{\log^2 \vep^{-1}}\right)\notag\\
& =1+\frac{A^\sharp_{1}(b)}{\log \vep^{-1}}+\little{o}\left(\frac{1}{\log \vep^{-1}}\right),\label{Xvep:1}
\end{align}
where the second equality uses the identity $\int\ol{\varphi}(z')\d z'=1$ and the notation
\begin{align}\label{def:A0}
A^\sharp_{1}(b)\,\defeq \int\left[\mu\pi \widecheck{\varphi}(z')-\overline{\varphi}(z')\log^-(b/|z'|)\right]\d z',
\end{align}
as well as the selection of $M_\vep$ in \eqref{def:Mvep}.\medskip

\noindent {\bf Step 4.} 
We show that the second term in \eqref{Xvep:12term} satisfies 
the following asymptotic expansion: uniformly in $b\in [M_\vep,M_\varphi]$ and $|z|\leq M_\varphi$ as $\vep\to 0$,
\begin{align}
&\int \oovarphi\uvep(z') V_{q\vep^2}\{\lv \oovarphi\uvep  R^{\lv \oovarphi\uvep}_{q\vep^2,b}\}(|z'|)\d z'\notag\\
&\quad =\biggl(1+\frac{A^\sharp_{2}}{\log \vep^{-1}}\biggr)
\mu X_\vep(b)+\frac{A^\sharp_{3}}{\log \vep^{-1}}+\little{o}\left(\frac{1}{\log \vep^{-1}}\right).\label{Xvep:2}
\end{align}
Here, $A^\sharp_{2}$ and $A^\sharp_{3}$ are constants defined below; we also present alternative forms for them using the definitions of $\widecheck{\varphi}(\cdot)$ and $A^\sharp_{0}(\cdot)$ in \eqref{def:cphi} and \eqref{def:A1}:
\begin{align}
A^\sharp_{2}&\,\defeq\, \int \mu\pi\widecheck{\varphi}(z')\d z'\notag\\
&=\int_0^\infty
\mu 2\pi^2 r\cdot \frac{1}{2\pi}\int_{-\pi}^\pi \left(-\frac{1}{\pi}\widehat{\varphi}(r\e^{\i \theta})\int_{\Bbb C}\widehat{\varphi}(z'')\log |r\e^{\i \theta}-z''|\d z''\right)\d \theta \d r
 \notag\\
 &=\mu\dint \widehat{\varphi}(z')\widehat{\varphi}(z'')\log \frac{1}{|z'-z''|}\d z'\d z'',\label{def:A2}\\
A^\sharp_{3}&\,\defeq\,\int \overline{\varphi}(z')A^\sharp_{0}(z')\d z'\notag\\
&=\dint \overline{\varphi}(z')\overline{\varphi}(z'')\biggl(\mu\log \frac{2^{1/2}}{q^{1/2}|z'-z''|}-\mu\EM+\lambda\biggr)\d z'\d z''.\label{def:A3}
\end{align}

To derive \eqref{Xvep:2}, we apply \eqref{def:phiexp}, \eqref{def:lvo}, and \eqref{eq:X0}, which yields:
\begin{align}
&\int \overline{\varphi}(z')V_{q\vep^2}\{\lv \oovarphi\uvep  R^{\lv \oovarphi\uvep}_{q\vep^2,b}\}(|z'|)\d z'\notag\\
&\quad =\int \left[\overline{\varphi}(z')+\frac{\mu\pi \widecheck{\varphi}(z')}{\log \vep^{-1}}\right]\biggl[\mu X_\vep(b)+\frac{A^\sharp_{0}(z')}{\log\vep^{-1}}\biggr]\d z' +\little{o}\left(\frac{1}{\log \vep^{-1}}\right)\notag.
\end{align}
Equation~\eqref{Xvep:2} now follows from the previous equality after some algebra.\medskip 

\noindent {\bf Step 5.} In this step, we derive an asymptotic expansion for $\mu X_\vep(b)$ by applying the expansions in \eqref{eq:X0}, \eqref{Xvep:1}, and \eqref{Xvep:2} to \eqref{Xvep:0}. The resulting expansion is given by the last equality below, which holds uniformly in $b \in [M_\vep, M_\varphi]$ as $\vep \to 0$:
\begin{align}
\mu X_\vep(b)
&=\mu\biggl[1+\frac{A^\sharp_{1}(b)}{\log \vep^{-1}}+\little{o}\left(\frac{1}{\log \vep^{-1}}\right)\biggr] \biggl[1-\mu X_\vep(b)-\frac{A^\sharp_{0}(b)}{\log \vep^{-1}}-\little{o}\left(\frac{1}{\log \vep^{-1}}\right)\biggr]\notag\\
&\quad +\mu\biggl[\biggl(1+\frac{A^\sharp_{2}}{\log \vep^{-1}}\biggr)\mu X_\vep(b)+\frac{A^\sharp_{3}}{\log \vep^{-1}}+\little{o}\biggl(\frac{1}{\log \vep^{-1}}\biggr)\biggr]\notag\\
& =\mu+\mu\biggl(\frac{A^\sharp_{1}(b)-A^\sharp_{0}(b)+A^\sharp_{3}}{\log\vep^{-1}}\biggr)+\mu\biggl(-1-\frac{A^\sharp_{1}(b)}{\log \vep^{-1}}+1+\frac{A^\sharp_{2}}{\log \vep^{-1}}\biggr)\mu X_\vep(b)\notag\\
&\quad +\little{o}\left(\frac{1}{\log \vep^{-1}}\right)\notag\\
\begin{split}
&=\mu+\frac{\mu(-\mu+1)A^\sharp_{1}(b)-\mu A^\sharp_{0}(b)+\mu^2 A^\sharp_{2} +\mu A^\sharp_{3}}{\log \vep^{-1}}+\little{o}\left(\frac{1}{\log \vep^{-1}}\right).
\label{X:eq1}
\end{split}
\end{align}
In more detail, $X_\vep(b)=1+\mathcal O(\log^{-1/2} \vep^{-1})$ uniformly in $b\in [M_\vep,M_\varphi]$ as $\vep\to 0$, justified by \eqref{def:phiexp} and \eqref{Qf:exp0}, is applied in the last two equalities. Additionally, the selection of $M_\vep$ in \eqref{def:Mvep} has been applied repeatedly to obtain the terms $\little{o}((\log \vep^{-1})^{-1})$ in the last two equalities. \medskip 

\noindent {\bf Step 6.}
We complete the proof of \eqref{eq:finalI:rad} in this step. First, applying \eqref{eq:X0} and \eqref{X:eq1}, we obtain that uniformly in $b\in [-M_\vep,M_\vep]$ and $|z|\leq M_\varphi$ as $\vep\to 0$,
\begin{align}\label{V:final}
V_{q\vep^2}\{\lv \oovarphi\uvep R^{\lv \oovarphi\uvep}_{q\vep^2,b}\}(|z|)
& =
\mu+\frac{A_{\mu,\lambda,q}^\sharp(z,b)}{\log\vep^{-1}}+\little{o}\left(\frac{1}{\log \vep^{-1}}\right),
\end{align}
where
\begin{align}
\begin{split}\label{def:Aab}
A_{\mu,\lambda,q}^\sharp(z,b)&\,\defeq\, \mu(-\mu+1)A^\sharp_{1}(b)+A^\sharp_{0}(z)-\mu A^\sharp_{0}(b)+\mu^2 A^\sharp_{2} +\mu A^\sharp_{3}.
\end{split}
\end{align}

To complete the derivation of \eqref{eq:finalI:rad}, it remains to show the following identity:
\[
A_{\mu,\lambda,q}^\sharp(z,b) = A_{\mu,\lambda,q}(|z|,b), 
\]
where $A_{\mu,\lambda,q}(a,b)$ is defined in \eqref{def:Azb}. This follows by combining the identities below:
\begin{align}
A_{\mu,\lambda,q}^\sharp(z,b)
&=(-\mu^2+\mu)[A^\sharp_{1}(b)-A^\sharp_{2}]+[A^\sharp_{0}(z)-\mu A^\sharp_{0}(b)]+\mu (A^\sharp_{2} + A^\sharp_{3}),\notag\\
(-\mu^2+\mu)[A^\sharp_{1}(b)-A^\sharp_{2}]
&=
(\mu^2-\mu)
\int \log^-(b/|z'|)  \overline{\varphi}(z')\d z',\label{Vfinal:1}\\
\begin{split}
A^\sharp_{0}(z)-\mu A^\sharp_{0}(b)
&=\int \log \biggl(\frac{|b-z''|^{\mu^2}}{|z-z''|^\mu}\biggr)
\overline{\varphi}(z'')\d z''\\
&\quad\;+(\mu-\mu^2)\biggl(\log \frac{2^{1/2}}{q^{1/2}} - \gamma_{\sf EM}\biggr)
+(1-\mu)\lambda,\label{Vfinal:2}
\end{split}\\
\begin{split}
\mu (A^\sharp_{2}+A^\sharp_{3})
&=-\mu^2\dint \varphi(z')\varphi(z'')\log |z'-z''|\d z'\d z''\\
&\quad\;+\mu^2\biggl(\log \frac{2^{1/2}}{q^{1/2}}-\EM\biggr)+\mu\lambda.\label{Vfinal:3}
\end{split}
\end{align}
Here, \eqref{Vfinal:1} is a consequence of \eqref{def:A0} and the first equality in \eqref{def:A2}; \eqref{Vfinal:2} is justified by using the definition \eqref{def:A1} of $A^\sharp_{0}(\cdot)$; and \eqref{Vfinal:3} follows from \eqref{def:A2} and \eqref{def:A3}, since $\dint \overline{\varphi}(z')\widehat{\varphi}(z'')\log |z'-z''|\d z'\d z'' = 0$ by virtue of the identity $\int h_1(|z''|) \widehat{h}_2(z'') \d z''= 0$ for all $h_1, h_2$, where $\widehat{h}_2$ is as defined in \eqref{def:hphi}. The proof is complete.
\end{proof}

\begin{proof}[Proof of (\ref{Qf:exp0final})  of Proposition~\ref{prop:5}]
Equation~\eqref{Qf:exp0final} is obtained by applying \eqref{def:Mvep}, \eqref{eq:finalI:rad} and \eqref{ET:exp} to \eqref{psiab:decr}, using also the algebra that follows: we have
$(1-A)(1-\mu-B)+(\mu+C)=1-[(1-\mu)A+B-C]+AB$.
\end{proof}

\subsection{Application to the Green's functions at criticality}\label{sec:radial}
In this section, we prove Theorem~\ref{thm:critres}, using Theorem~\ref{thm:>0} (2$\cc$) as the basic tool. Our starting point is the following expansions:
\begin{align}
\E^W_z[\e^{ A_f(t)}g(W_t)]&=\E^W_z[g(W_t)]+\E^W_z\left[\int_0^tf(W_s)\e^{A_f(t-s)}\d s g(W_t)  \d r\right]\notag\\
&=\E^W_z[g(W_t)]+\E^W_z\left[\int_0^tf(W_s)\int_0^s \e^{ A_f(r)}f(W_r) g(W_t)  \d r\right].\label{expansion:2}
\end{align}
Take $f\equiv \lv \varphi_\vep(z)$. Then, by an application of the Markov property and a standard property of the Laplace transform, we expect that for any $z'$ of order $1$, 
\begin{align}
G^{\lv f}_q\{g\}(z)&\approx G_q\{g\}(z)+G_q(z)\lv G_{q}^{\lv(\oovarphi\uvep)_\vep}\{g\} \label{expansion:3}\\
&\approx G_q\{g\}(z)+G_q(z)\lv^2 G_{q}^{\lv(\oovarphi\uvep)_\vep}\{(\oovarphi\uvep)_\vep\} (\vep z') G_q\{g\}(0).\label{expansion:4}
\end{align}
These informal approximations, previously used and made rigorous in \cite[Section~5]{C:DBG}, remain central to the proof that follows.\medskip 

\begin{proof}[Proof of Theorem~\ref{thm:critres}] We begin by establishing the following a priori bound, which, as in the proof of \cite[Proposition~5.2]{C:DBG}, suffices to justify \eqref{expansion:4} as an approximation uniform in $|z'|\leq M$ for any $M>0$:
\begin{align}\label{final:q}
\sup_{z:|z|\leq M}\lv^2 G_{q}^{\lv(\oovarphi\uvep)_\vep}\{(\oovarphi\uvep)_\vep\} (\vep z)<\infty\mbox{\;\; $\forall$ large }q>\beta.
\end{align}
To establish \eqref{final:q}, we extend the argument presented in \cite[Lemmas~5.4 and~5.5, especially (5.38)]{C:DBG}. In particular, the technique used in \cite[(5.38)]{C:DBG} remains applicable to \eqref{final:q} because: (1) $\inf_z\oovarphi\uvep (z)\geq 0$ for all small $\vep > 0$, given that $\varphi \geq 0$ and $|\widecheck{\varphi}| \leq C(\varphi) \overline{\varphi}$ by the definition of $\widecheck{\varphi}$ in \eqref{def:hphi}; and (2) $\oovarphi\uvep$
 is composed of $\overline{\varphi}$, which is nonnegative and satisfies $\int \overline{\varphi} = 1$, and a function uniformly bounded by $C(\lambda,\varphi)(\log \vep^{-1})^{-1}$.

It remains to show that the Green's functions in \eqref{final:q} converge uniformly in $|z|\leq M$ to $2\pi/[\log (q/\beta)]$. To simplify these functions, observe that 
\begin{gather}
\mbox{$(\int_0^t\vep^{-2}f(\vep^{-1}W_s)\d s,\vep^{-1}W_t)$ under $\P_{\vep z}^W$}\stackrel{\rm (d)}{=}
\mbox{$(\int_0^{\vep^{-2}t}f ( W_{s})\d s, W_{\vep^{-2} t})$ under $\P^W_{z}$}\notag\\
\Longrightarrow \lv^2 G_{q}^{\lv(\oovarphi\uvep)_\vep}\{(\oovarphi\uvep)_\vep\} (\vep z)=\lv^2 G_{q\vep^2}^{\lv\oovarphi\uvep}\{\oovarphi\uvep\} (z).\label{reduce:1}
\end{gather}
Thus, the proof reduces to showing that $\lv^2 G_{q\vep^2}^{\lv\oovarphi\uvep}\{\oovarphi\uvep\}(z)$ converge uniformly in $|z| \leq M$ to $2\pi/[\log (q/\beta)]$. This is demonstrated in Steps 1 and 2 below. \medskip  

\noindent {\bf Step~1.}
Pick radially symmetric functions $\upsilon_\vep$ such that $0\leq \upsilon_\vep\leq 1$, $\upsilon_\vep(z)=0$ for all $|z|\leq  M_\vep$, and $\upsilon_\vep(z)=1$ for all $|z|\geq 2M_\vep$. In this step, we show that
\begin{align}\label{reduce:2}
\sup_{z:|z|\leq M}\left|\lv^2 G_{q\vep^2}^{\lv\oovarphi\uvep}\{\oovarphi\uvep(1-\upsilon_\vep)\} (z)\right|\xrightarrow[\vep\to 0]{}0.
\end{align}

To justify \eqref{reduce:2}, we use the following analogue of \eqref{exp:10} when the terminal condition is chosen to be $\oovarphi\uvep (1-\upsilon_\vep)$:
\[
\lv^2G_{q\vep^2}^{\lv\oovarphi\uvep}\{\oovarphi\uvep (1-\upsilon_\vep)\}=\lv^2 G_{q\vep^2}\{\oovarphi\uvep (1-\upsilon_\vep)\}+ G_{q\vep^2}\{\lv\oovarphi\uvep\cdot \lv^2 G_{q\vep^2}^{\lv\oovarphi\uvep}\{\oovarphi\uvep (1-\upsilon_\vep)\} \}.
\]
This allows Lemma~\ref{lem:Dyson} stated below, using the following choice of parameters:
\begin{align*}
H&\equiv \lv^2 G_{q\vep^2}^{\lv\oovarphi\uvep}\{\oovarphi\uvep(1-\upsilon_\vep)\},\\
 H_0&\equiv \lv^2G_{q\vep^2}\{\oovarphi\uvep (1-\upsilon_\vep)\},\\
 f&\equiv \lv \oovarphi\uvep,\\
 D&\equiv \{z\in \Bbb C;|z|\leq M\vee M_\varphi\},\\
 \nu&\equiv q\vep^2.
\end{align*}
Note that $\sup_{z \in D}\lv G_{q\vep^2}\{\oovarphi\uvep (1 - \upsilon_\vep)\}(z)$ tends to zero by using \eqref{asymp:G}, our choice of $\upsilon_\vep$, and the definition \eqref{def:Mvep} of $M_\vep$. Additionally, $fH_0=\lv^2 \oovarphi\uvep\cdot \lv G_{q\vep^2}\{\oovarphi\uvep (1-\upsilon_\vep)\}$. Thus, \eqref{reduce:2} follows from Lemma~\ref{lem:Dyson}, \eqref{final:q} and the necessary condition in \eqref{reduce:1}.
\medskip 

\noindent {\bf Step~2.}
In this step, we establish the following limit, which, together with \eqref{reduce:2}, completes the proof of Theorem~\ref{thm:critres}: for all $q$ from \eqref{final:q} with $q>\beta$,
 \begin{align}
\sup_{z:|z|\leq M}\left|\lv^2 G_{q\vep^2}^{\lv\oovarphi\uvep}\{\oovarphi\uvep \upsilon_\vep\} (z)-\frac{2\pi }{\log(q/\beta)}\right|\xrightarrow[\vep\to 0]{}0.\label{finalgoal}
\end{align}

To prove \eqref{finalgoal}, we use the formula below, which follows from \eqref{kernel:expexp} and \eqref{dec:Gf}:
\begin{align}
\lv^2 G_{q\vep^2}^{\lv\oovarphi\uvep}\{\oovarphi\uvep\upsilon_\vep\} (z)&=\int_0^\infty  \frac{\lv^2 R^{\lv\oovarphi\uvep}_{q\vep^2,b}(|z|)V_{q\vep^2}(b,b)}{\max\{1-V_{q\vep^2}\{\lv\oovarphi\uvep R^{\lv\oovarphi\uvep}_{q\vep^2,b}\}(b),0\}}\oovarphi\uvep(b)\upsilon_\vep(b)\d b.\label{eq2:main}
\end{align}
With $\beta$ defined by \eqref{lim:beta} and with $\mu=1$, the kernel on the right-hand side satisfies: 
\begin{align*}
 \frac{ \lv^2\cdot R^{{\lv\oovarphi\uvep}}_{q\vep^2,b}(|z|)\cdot V_{q\vep^2}(b,b)}{\max\{1-V_{q\vep^2}\{{\lv\oovarphi\uvep} R^{{\lv\oovarphi\uvep}}_{q\vep^2,b}\}(b),0\}}
  =  \frac{\frac{\pi^2+\little{o}_{\varphi,\chi,q,M_\varphi}(1)}{(\log\vep^{-1})^2}\cdot [1+\little{o}(1)]\cdot \log\vep^{-1}[2b+ b\cdot \little{o}(1)]}{\max\{[\frac{1}{2}\log (q/\beta)+\little{o}_{\varphi,\chi,q,M_\varphi}(1)]/\log\vep^{-1},0\}},
\end{align*}
where the right-hand side follows from \eqref{def:lvo},
\eqref{asymp:U}, \eqref{eq:finalI:rad}, and \eqref{Qf:exp0}. Moreover, the previous equality holds uniformly for $(|z|,b) \in \Gamma_{\geq}(\vep, M_\varphi) \cup \Gamma_{\leq}(M_\varphi)$ as $\vep \to 0$. Since $\log(q/\beta) > 0$, we obtain the following limit, holding uniformly in $0 < |z| \leq M$:
\begin{align}
&\lv^2 G_{q\vep^2}^{\lv\oovarphi\uvep}\{\oovarphi\uvep \upsilon_\vep\} (z) \xrightarrow[\vep\to 0]{}\int_0^\infty \frac{4\pi^2b}{\log (q/\beta)} \overline{\varphi}( b)\d b=\frac{2\pi}{\log(q/\beta)},\label{finalintegral}
\end{align}
where the equality comes from switching to the polar coordinates. This is enough to prove \eqref{finalgoal}.
\end{proof}

\begin{lem}\label{lem:Dyson}
Let $f,H,H_0$ be nonnegative functions defined on $\R^2$ with $\|H\|_f<\infty$ and $\|H_0\|_f<\infty$, with $\|\cdot\|_h$ defined in \eqref{def:Hh}. Suppose $D\supseteq \supp(f)$ and $\nu \in (0, \infty)$ so that $G^f_\nu\{f\} < \infty$ in $D$. Then the following property holds:
\begin{align}\label{H:ineq}
\begin{split}
H&\leq H_0+G_\nu \{fH\}\mbox{ in $D$}\Longrightarrow\mbox{$H\leq  H_0+G^f_\nu \{fH_0\}$ in $D$.}
\end{split}
\end{align}
\end{lem}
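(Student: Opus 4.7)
\medskip

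\noindent\textbf{Proof proposal for Lemma~\ref{lem:Dyson}.} The plan is to recognize this as a Dyson-series argument: iterate the hypothesis to obtain a geometric-type upper bound and identify the resulting series with $H_0+G^f_\nu\{fH_0\}$. For brevity let $T\phi\,\defeq\, G_\nu\{f\phi\}$, defined for nonnegative $\phi$ bounded on $D_f$. Since $f\geq 0$ is supported in $D_f\subseteq D$ and $G_\nu(z,z')\geq 0$, the operator $T$ preserves nonnegativity, and for any nonnegative $\phi$ one has $T\phi(z)\leq \|\phi\|_f G_\nu\{f\}(z)$ for $z\in D$.

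First I would iterate. Applying $T$ to the hypothesis $H\leq H_0+TH$ on $D$ (noting $T$ preserves inequalities on nonnegative functions, and $H\leq H_0+TH$ also holds on $D_f\subseteq D$ where all quantities are finite), one finds
\begin{align*}
H\leq H_0+TH\leq H_0+TH_0+T^2 H\leq\cdots\leq \sum_{j=0}^n T^j H_0+T^{n+1}H\quad\text{on $D$}
\end{align*}
for every $n\in\Bbb Z_+$, where $T^0H_0\,\defeq\,H_0$.

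Next I would handle the remainder $T^{n+1}H$. A direct unfolding gives, for $z\in D$,
\begin{align*}
T^{n+1}H(z)=\int \prod_{\ell=1}^{n+1}\bigl[G_\nu(z_{\ell-1},z_\ell)f(z_\ell)\bigr] H(z_{n+1})\,dz_1\cdots dz_{n+1}
\end{align*}
with $z_0=z$, and since the integrand is supported where every $z_\ell\in D_f$, the bound $H(z_{n+1})\leq\|H\|_f$ yields $T^{n+1}H\leq \|H\|_f T^{n+1}\{\1\}$ on $D$. On the other hand, the Feynman--Kac expansion of $G^f_\nu\{f\}$, obtained from \eqref{Dj:rep}--\eqref{symmI1} applied with $g=f$, reads $G^f_\nu\{f\}=\sum_{j=0}^\infty T^{j+1}\{\1\}$. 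The hypothesis $G^f_\nu\{f\}<\infty$ in $D$ then forces $T^{n+1}\{\1\}\to 0$ pointwise on $D$, and hence $T^{n+1}H\to 0$ on $D$.

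Finally, I would identify the series $\sum_{j=0}^\infty T^j H_0$. The same Feynman--Kac expansion applied with $g=fH_0$ yields $G^f_\nu\{fH_0\}=\sum_{j=0}^\infty T^j\{G_\nu\{fH_0\}\}=\sum_{j=0}^\infty T^{j+1}H_0=\sum_{j=1}^\infty T^jH_0$, and the interchanges of sum and integral are legitimate by Tonelli since everything is nonnegative; the sum converges pointwise on $D$ because $T^jH_0\leq\|H_0\|_f T^j\{\1\}$ and the latter sums to $G^f_\nu\{f\}<\infty$. Combining, letting $n\to\infty$ in the iterated inequality yields
\begin{align*}
H\leq H_0+\sum_{j=1}^\infty T^jH_0=H_0+G^f_\nu\{fH_0\}\quad\text{on $D$,}
\end{align*}
which is the desired conclusion. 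The only mildly delicate point is the tail estimate: the bookkeeping that reduces $T^{n+1}H$ to $\|H\|_f T^{n+1}\{\1\}$ relies crucially on $f$ being supported in $D_f$ so that $H$ need only be controlled there, and the vanishing of $T^{n+1}\{\1\}$ then follows from the assumed finiteness of the Dyson series $G^f_\nu\{f\}$ on $D$.
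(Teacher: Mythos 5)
Your proof is correct and follows essentially the same route as the paper's: iterate the inequality, identify the iterated kernels with the Feynman--Kac (Dyson) series via \eqref{symmI1}, bound the remainder by $\|H\|_f$ times a term of the convergent series for $G^f_\nu\{f\}$, and sum the remaining series by monotone convergence to obtain $G^f_\nu\{fH_0\}$. One cosmetic point: the iteration only needs the hypothesis inequality on $\supp(f)\subseteq D$ (where $f$ is nonzero in the integrals), not on $D_f$ --- the inclusion $D_f\subseteq D$ is not among the hypotheses --- but this does not affect the argument.
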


\begin{proof}
We iterate $H\leq H_0+G_\nu \{fH\}$. For all integer $n\geq 1$ and $1\leq j\leq n-1$,
applying \eqref{symmI1} with $h \in \{fH_0, fH\}$ and $k \in \{j, n-1\}$ shows the following inequality for any fixed $z_0\in D$:
\begin{align*}
H(z_0)
&\leq H_0(z_0)+\sum_{j=1}^{n-1}\int_0^\infty \frac{\e^{-\nu t}}{(j-1)!}\E^W_{z_0}\left[\left(\int_0^t f(W_{s})\d s\right)^{j-1} (fH_0)(W_t) \right]\d t\\
&\quad+\int_0^\infty \frac{\e^{-\nu t_{n}}}{(n-1)!}\E_{z_0}^W\left[\left(\int_0^{t_{n}} f(W_{s})\d s\right)^{n-1}(fH)(W_{t_{n}})\right]\d t_{n}.
\end{align*}
Since $\|H\|_f,\|H_0\|_f,G^f_\nu\{f\}$ are finite by assumption, passing $n\to\infty$ for the right-hand side of the last inequality leads to $H(z_0)\leq H_0(z_0)+G^f_\nu\{fH_0\}(z_0)$, as required. 
\end{proof}

\subsection{Alternative derivation by asymptotic recursions}\label{sec:another}
In this section, we provide an alternative proof of the following limit, 
which assumes \eqref{def:lvoointro} and, as demonstrated in \cite[Proposition~5.2]{C:DBG}, is crucial for deriving \eqref{finalgoal00}: for all $M\in (0,\infty)$ and $q \in (q_0,\infty)$ with sufficiently large $q_0 \in (0, \infty)$ independent of $\vep$,
\begin{align}\label{lim:finfty}
 \lim_{\vep\to 0}\lv f_\infty(z)&=\lim_{\vep\to 0}\frac{\pi}{\log \vep^{-1}} f_\infty(z)
=\frac{2\pi }{\log (q/\beta)}G_q\{g\}(0)\;\;\mbox{uniformly in }|z|\leq M,
\end{align}
where $f_\infty(z)\;\defeq \;G^{\llv\varphi_\vep}_q\{g\}(\vep z)$ for nonnegative $g\in \B_{b}$, and $\beta$ is defined in \eqref{lim:beta}. Recall Remark~\ref{rmk:betadiff} for a discussion of the notational differences between this paper and \cite{C:DBG}. The proof of \eqref{lim:finfty} presented below assumes $\varphi \geq 0$ and utilizes the method of asymptotic recursions. The argument we thus use parallels the one in Section~\ref{sec:ar-der}, but is now significantly simplified due to the assumption $\varphi\geq 0$. This allows us to avoid several subtleties. In particular, we do not need to verify an analogue of Assumption~\ref{ass:q}, since \cite[Lemmas~5.3 and 5.5]{C:DBG} ensure that there exists $q_1 \in (\beta, \infty)$, independent of $\vep$, such that $\sup_{z\in \R^2}f_\infty(z) < \infty$ for all $q\in ( q_1,\infty)$.

The main equation used in this proof of \eqref{lim:finfty} is the following straightforward extension of \eqref{Finfty:expansion}: for all $q\in (q_1,\infty)$,
\begin{align}
f_\infty(z)&= G_q\{g\}(0)+\frac{\lv}{\pi}\biggl(\log \frac{2^{1/2}}{q^{1/2}\vep}-\EM\biggr)\lla \varphi f_\infty\rra+\lv \mathcal K\{f_\infty\}(z) +\mathcal O^f_\infty,\label{expansion:varphi=1-1}
\end{align}
holding uniformly in $|z|\leq M$. Here, $\mathcal K\{\cdot\}$ is as defined in \eqref{def:K}, and
we define 
\[
\mathcal O^f_\infty\,\defeq\, \mathcal O_{\varphi,\lambda,g,q}(\vep^{2/3})(1+\|f_\infty\|_\varphi).
\]
To proceed, observe that by \eqref{expansion:varphi=1-1}, the following expansion holds uniformly for all $|z|,|z'|\leq M$:
\begin{gather}
f_\infty(z')=f_\infty(z)+\lv \mathcal K\{f_\infty\}(z')-\lv \mathcal K\{f_\infty\}(z)+\mathcal O^f_\infty,\label{expansion:varphi=1-2}
\end{gather}
which is analogous to \eqref{1replace0} in Lemma~\ref{lem:1replace00}. Since $\lla\varphi\rra = 1$, \eqref{expansion:varphi=1-2} can be used to expand $\lla \varphi f_\infty\rra$ in \eqref{expansion:varphi=1-1}. Specifically, integrating both sides of \eqref{expansion:varphi=1-2} against $\varphi(z')\d z'$ yields
\begin{align*}
\lla \varphi f_\infty\rra&=f_\infty(z)+\lv \int \varphi(z')\mathcal K\{f_\infty\}(z')\d z'- \lv \mathcal K\{f_\infty\}(z)+\mathcal O^f_\infty\\
&=f_\infty(z)+\lv \iint \varphi(z')\kappa(z'-z'')\varphi(z'')\\
&\quad \times \left[f_\infty(z)+\lv \mathcal K\{f_\infty\}(z'')-\lv \mathcal K\{f_\infty\}(z)+\mathcal O^f_\infty\right]\d z'\d z''  - \lv \mathcal K\{f_\infty\}(z)+\mathcal O^f_\infty\\
&=f_\infty(z)+\lv \iint \varphi(z')\kappa(z'-z'')\varphi(z'')\d z'\d z''f_\infty(z) - \lv \mathcal K\{f_\infty\}(z)+\wt{\mathcal O}^f_\infty,
\end{align*}
where the second equality writes out the definition of $\mathcal K\{\cdot\}$ in \eqref{def:K} and then applies \eqref{expansion:varphi=1-2} again with $z'$ replaced by $z''$, and 
\[
\wt{\mathcal O}^f_\infty\,\defeq\, \mathcal O_{\varphi,\lambda,g,q}(\log^{-2}\vep^{-1})(1+\|f_\infty\|_\varphi).
\]

We now complete the proof of \eqref{lim:finfty}. Recall that we set $\mu = 1$ in \eqref{def:lvoointro}. Substituting the right-hand side for $\lla \varphi f_\infty \rra$ in \eqref{expansion:varphi=1-1} yields:
\begin{align}
f_\infty(z)&=G_q\{g\}(0)+\frac{1}{\pi} \left(\frac{\pi}{\log \vep^{-1}}+\frac{ \lambda\pi}{\log^2 \vep^{-1}}\right)\times\biggl(\log \frac{2^{1/2}}{q^{1/2}\vep}-\EM\biggr)\notag\\
&\quad \times \biggl(f_\infty(z)+\lv \iint \varphi(z')\kappa(z'-z'')\varphi(z'')\d z'\d z'' f_\infty(z)- \lv \mathcal K\{f_\infty\}(z)+\wt{\mathcal O}^f_\infty\biggr)\notag\\
&\quad +
\lv \mathcal K\{f_\infty\}(z)+\mathcal O^f_\infty\notag\\
&=G_q\{g\}(0)+f_\infty(z)+\Biggl(\frac{\pi \lla \mc E(\varphi)\rra+\log \frac{2^{1/2}}{q^{1/2}}+\lambda-\EM}{\log \vep^{-1}} \Biggr)f_\infty(z)+\wt{\mathcal O}^f_\infty,
\label{expansion:varphi=1-3}
\end{align}
where we use the definition \eqref{def:energy} of $\mathcal E(\varphi)$ in the last equality. 
By \eqref{lim:beta}, the prefactor of the third term in \eqref{expansion:varphi=1-3} equals $-\frac{1}{2}\log(q/\beta) / \log \vep^{-1}$. Since $\lv \sim \pi / \log \vep^{-1}$, rearranging both sides of the previous display and letting $\vep \to 0$ shows that we can select a sufficiently large $q_0 \in (q_1, \infty)$, independent of $\vep$, so that \eqref{lim:finfty} holds for all $q \in (q_0, \infty)$.

\end{document}